\definecolor{lightgray}{gray}{0.9}
\theoremstyle{definition}
\newtheorem{definition}{Definition}[section]
\newtheorem{example}[definition]{Example}
\newtheorem{remark}[definition]{Remark}
\theoremstyle{plain}
\newtheorem{theorem}[definition]{Theorem}
\newtheorem{lemma}[definition]{Lemma}
\newtheorem{proposition}[definition]{Proposition}
\newtheorem{corollary}[definition]{Corollary}
\DeclareMathOperator{\Nef}{Nef}
\DeclareMathOperator{\rk}{rk}
\DeclareMathOperator{\id}{id}
\DeclareMathOperator{\Aut}{Aut}
\DeclareMathOperator{\aut}{aut}
\DeclareMathOperator{\im}{im}
\DeclareMathOperator{\stab}{stab}
\newcommand{\QQ}{\mathbb{Q}}
\newcommand{\FF}{\mathbb{F}}
\newcommand{\RR}{\mathbb{R}}
\newcommand{\CC}{\mathbb{C}}
\newcommand{\ZZ}{\mathbb{Z}}
\newcommand{\PP}{\mathbb{P}}
\newcommand{\HH}{\mathbb{H}}
\newcommand{\C}{\mathcal{C}}
\newcommand{\D}{\mathcal{D}}
\newcommand{\E}{\mathcal{E}}
\newcommand{\F}{\mathcal{F}}
\newcommand{\M}{\mathcal{M}}
\renewcommand{\O}{\mathcal{O}}
\renewcommand{\P}{\mathcal{P}}
\newcommand{\R}{\mathcal{R}}
\newcommand{\Num}{\textrm{Num}}
\newcommand{\RN}[1]{\textup{\uppercase\expandafter{\romannumeral#1}}}
\newcommand*{\defeq}{\mathrel{\rlap{%
                     \raisebox{0.3ex}{$\m@th\cdot$}}%
                     \raisebox{-0.3ex}{$\m@th\cdot$}}%
                     =}
\newcommand{\Gbar}{\overline{G}}
\newcommand{\Pbar}{\overline{\P}^\mathbb{Q}}
\newcommand{\taubar}{\bar{\tau}}
\def\blfootnote{\xdef\@thefnmark{}\@footnotetext}
\title[]{527 Elliptic Fibrations on Enriques surfaces}
\author{Simon Brandhorst}
\address{Simon Brandhorst,
Fakult\"at f\"ur Mathematik und Informatik, Universit\"at des Saarlandes, Campus E2.4, 66123 Saarbr\"ucken, Germany}
\email{brandhorst@math.uni-sb.de}
\author{Víctor González-Alonso}
\address{Víctor González-Alonso,
Institut für Algebraische Geometrie,
Leibniz Universit\"at Hannover
Welfengarten 1,
30167 Hannover, Germany}
\email{gonzalez@math.uni-hannover.de}
\thanks{Gefördert durch die Deutsche Forschungsgemeinschaft (DFG) – Projektnummer 286237555 – TRR 195.
Funded by the Deutsche Forschungsgemeinschaft (DFG, German Research Foundation) – Project-ID 286237555 – TRR 195.}
\begin{document}
\begin{abstract}
Barth and Peters showed that a general complex Enriques surface has exactly 527 isomorphism classes of elliptic fibrations.
We show that every Enriques surface has precisely 527 isomorphism classes of elliptic fibrations when counted with the appropriate multiplicity. Their reducible singular fibers and the multiplicities can be calculated explicitly.
The same statements hold over any algebraically closed field of characteristic not two.
To explain these results, we construct a moduli space of complex elliptic Enriques surfaces
and study the ramification behavior of the forgetful map to the moduli space of unpolarized Enriques surfaces.
Curiously, the ramification indices of a similar map compute the hyperbolic volume of the rational polyhedral
fundamental domain appearing in the Morrison-Kawamata cone conjecture.
\end{abstract}
\maketitle
\addtocontents{toc}{\protect\setcounter{tocdepth}{1}}
\section{Introduction}
We work over an algebraically closed field $k = \overline{k}$ of characteristic not two.
An Enriques surface is a smooth proper surface $Y/k$ of Kodaira dimension $\kappa(Y)=0$ and second  Betti number $b_2(Y)=10$.  They are one of the four classes of Kodaira dimension zero in the Enriques-Kodaira classification of minimal algebraic surfaces.

An Enriques surface is called unnodal if it does not contain smooth rational curves. A general Enriques surface is unnodal.
In \cite{barth-peters:very_general_enriques} Barth and Peters show that a general unnodal complex Enriques surface $S$ has exactly $527$ elliptic fibrations up to isomorphism.  The result was extended to positive characteristic by Martin \cite{martin:automorphisms_of_unnodal_enriques_surfaces}.
If $Y$ is a general complex $1$-nodal Enriques surface, then Cossec and Dolgachev \cite{cossec-dolgachev:automorphisms_nodal} announced that
$Y$ has $255$ isomorphism classes of elliptic fibrations
with $\tilde{A_1}$ reducible fibers and $136$ classes with no reducible fibers (see e.g. \cite[Section 8.4, Table 8.4]{enriquesII} \cite[6.5]{brandhorst_shimada:tautaubar} for proofs).
This work is prompted by an idea of Cossec communicated to us by Dolgachev: The formula $527=255+2\cdot 136$ should have an explanation in terms of the ramification behavior of a forgetful map of moduli spaces.

More precisely, an elliptic fibration on an Enriques surface $Y$ is a morphism $\phi \colon Y \to C$ to a smooth curve $C$ such that its generic fiber is a smooth curve of genus one over the function field $k(C)$. Since $Y$ is an Enriques surface, it holds $C=\mathbb{P}^1$. The numerical class $\left[\phi^{-1}\left(p\right)\right]\in\Num\left(Y\right)$ is $2$-divisible, and  $f:=\frac{1}{2} [\phi^{-1}(p)] \in\Num\left(Y\right)$ is the so called \emph{half fiber}.
Two elliptic fibrations $\phi,\phi'\colon Y\to\mathbb{P}^1$ are isomorphic if there are $\psi \in \Aut(Y)$ and $\eta \in \Aut(\PP^1)$ with $\phi\circ \psi = \eta \circ \varphi'$, which holds if and only if the corresponding half fibers $f$ and $f'$ lie in the same $\Aut(Y)$-orbit.

In \Cref{sect:moduli-general} we construct the moduli space $\M_{En,e}$ of complex elliptic Enriques surfaces $(Y,f)$ and show that the forgetful map
\[\P^e_0\colon \M_{En,e} \to \M_{En}, \quad [Y,f] \mapsto [Y]\]
is a ramified cover of normal quasi-projective varieties. The fiber over $[Y]$ counts isomorphism classes of elliptic fibrations on $Y$. By the result of Barth and Peters the morphism $\P^e_0$ has degree $527$. This leads us to the following theorem.

\begin{theorem}\label{thm:intro}
Let $k = \mathbb{C}$ and $Y/k$ be an Enriques surface. Then $Y$ has exactly $527$ isomorphism classes of elliptic fibrations $\varphi$ counted with the ramification degree
$r_f$ of $\P^e_0$ at $[Y,f]$.
\end{theorem}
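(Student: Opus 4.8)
The plan is to reduce the statement to a single principle: for a finite surjective morphism of normal varieties the local degrees over any fiber add up to the global degree. Granting the construction of \Cref{sect:moduli-general}, the forgetful morphism $\P^e_0\colon\M_{En,e}\to\M_{En}$ is a finite surjective map of normal quasi-projective complex varieties whose fiber over $[Y]$ is canonically the set of isomorphism classes of elliptic fibrations on $Y$, and whose degree equals $527$ because over a general (unnodal) point the theorem of Barth--Peters produces exactly $527$ distinct reduced points. Defining $r_f$ as the local degree of $\P^e_0$ at $[Y,f]$, the theorem becomes the identity
\[
\sum_{[Y,f]\in(\P^e_0)^{-1}([Y])} r_f \;=\; \deg\P^e_0 \;=\; 527 .
\]

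First I would assemble the inputs from \Cref{sect:moduli-general}: finiteness, surjectivity, and normality of both source and target, with the degree read off at a general point as above. I would also allow $\M_{En,e}$ to be reducible, its components indexed by the finitely many $\Aut$-orbit types of half-fiber (this is what the decomposition $527=255+2\cdot 136$ foreshadows); this causes no trouble, since conservation of degree may be applied to each component separately and the component degrees sum to $527$.

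The core step is conservation of degree, which over $\CC$ I would establish analytically. A finite morphism is proper with finite fibers, so over a small contractible neighborhood $V$ of $[Y]$ the preimage splits as a disjoint union $\bigsqcup_f U_f$ of neighborhoods of the points $[Y,f]$, and each restriction $U_f\to V$ is a finite branched cover of normal complex spaces whose sheet number is, by definition, $r_f$. A general point of $V$ therefore has exactly $r_f$ preimages in $U_f$, and summing---together with the fact that it has $\deg\P^e_0=527$ preimages in all of $\bigsqcup_f U_f$---yields the displayed identity. Structurally this is the orbifold picture: on the smooth period charts $\P^e_0$ is a quotient map between finite-group quotients of a neighborhood in the period domain, the local degree at a fiber point is the index of the relevant stabilizers, and the summation is the elementary orbit-counting identity for the action of that finite group on the fiber.

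The step I expect to be the main obstacle is making $r_f$ and conservation precise at the singular points of $\M_{En}$, which carries only finite-quotient singularities and is in general not smooth, so that one cannot simply invoke miracle flatness to equate the degree with the length of each fiber. I would resolve this in one of two equivalent ways: either pass to the smooth Deligne--Mumford stack underlying $\M_{En}$ and its elliptic analogue, on which $\P^e_0$ lifts to a representable morphism whose local degree is the honest stabilizer index, or argue purely analytically with finite maps of normal complex spaces, where conservation of the branched-cover degree is standard and needs no flatness hypothesis. The remaining point to check is that these intrinsically defined local degrees coincide with the ramification degrees $r_f$ of the statement---precisely the compatibility, furnished by the Torelli-type identification underlying \Cref{sect:moduli-general}, between the orbifold chart of $\M_{En}$ at $[Y]$ and the stabilizer of the half-fiber $f$.
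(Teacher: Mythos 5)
Your proposal is correct, and it takes a genuinely different route from the paper's. Granting the construction in \Cref{sect:moduli-general} (as you do, and as the statement itself presupposes, since $r_f$ is defined via $\P^e_0$), all of your inputs are available: $\P^e_0$ is a finite surjective morphism of normal quasi-projective varieties, its fiber over $[Y]$ is the set of isomorphism classes of elliptic fibrations on $Y$ (\Cref{thm:moduli} combined with \Cref{prop:class-ell-fibr}), and conservation of degree for ramified coverings of normal complex spaces is exactly the theory from \cite[Chapter I Section 16]{bhpv:compact-complex-surfaces} that the paper itself invokes in \Cref{sect:moduli-fibrations} to define the local degrees, so no flatness issue arises despite the quotient singularities. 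One point you should make explicit in the degree computation: the Barth--Peters locus is only a countable intersection of dense opens, so you need a Baire-category argument to find a point that simultaneously lies outside the branch locus (a dense open) and has a $527$-point fiber; then $\deg\P^e_0=527$. The paper never applies conservation of degree as a geometric principle. Instead it proves the identity $527=\sum_f r_f$ combinatorially: the cone theorem \Cref{thm:NefmodAut} yields the chain of bijections $\E(Y)/\aut(Y)\cong\E(\Pbar_Y)/G_Y\cong\E(S_Y\otimes\FF_2)/\Gbar_Y$ (\Cref{prop:EYbar/GYbar}), the $527$-element set $\E(\Pbar_Y)/G_0$ is decomposed into $\Gbar_Y$-orbits (\Cref{thm:elliptic_fibrations}), and the orbit lengths are identified with the analytic ramification degrees of $\P^e_0$ through the stabilizer-index formulas of \Cref{lem:quotients} and \Cref{thm:branch}. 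The trade-off is clear: your soft argument is shorter and bypasses the lattice theory entirely, but it proves only the bare statement over $\CC$, whereas the paper's route additionally delivers the computable formula $r_f=\#\left(\overline{f}\cdot\Gbar_Y\right)$ of \Cref{prop:intro}, the determination of singular fibers, and a version valid in any characteristic $\neq 2$, where no moduli space is available. One factual correction: $\M_{En,e}$ is irreducible --- it is $\D_N/\Gamma_e^+$ with $\D_N$ connected, and there is a unique $O(E_{10})$-orbit of primitive isotropic vectors --- so your hedge about reducible components indexed by orbit types is off the mark; the decomposition $527=255+2\cdot 136$ reflects ramification over the $1$-nodal locus, not a component structure. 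Since you handle the hypothetical reducible case correctly, this costs your argument nothing.
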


The ramification degrees are calculated as follows:
Let $S_Y:=\Num(Y)$ denote the numerical lattice of $Y$, $\aut(Y)$ the image of the natural homomorphism $\Aut(Y) \to O(S_Y)$, and $W(Y)\leq O(S_Y)$ the nodal Weyl group of $Y$, i.e. the subgroup generated by the reflections in smooth rational curves of $Y$.
Set $G_Y := W(Y) \rtimes \aut(Y)$ and define $\Gbar_Y$ as its image in $O(S_Y \otimes \FF_2)$. We call $\Gbar_Y$ the Vinberg group.
For any class $f \in S_Y$ denote by $\overline{f}$ its image in $S_Y \otimes \FF_2$.
\begin{proposition}\label{prop:intro} Let $Y$ be a complex Enriques surface and $f\in S_Y$ a primitive, isotropic nef class. Then the following hold.
\begin{enumerate}
\item[(a)]  The ramification degree $r_f$ of $\P^e_0$ at $[Y,f]$ is the orbit length
\begin{equation}\label{eq:def_ep}
    r_f=\#\left(\overline f\cdot \Gbar_Y \right).
\end{equation}
\item[(b)] The set of isomorphism classes of elliptic fibrations on $Y$ is in bijection with the quotient
\begin{equation} \label{eq:isom_ell_fibr_as_quotient}
    \{\overline{f} \in S_Y\otimes \FF_2 \mid \overline{f}^2=0, \overline{f}\neq 0\}/\Gbar_Y.
\end{equation}
\end{enumerate}
\end{proposition}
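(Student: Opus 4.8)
The plan is to prove both parts by reducing the geometric question about elliptic fibrations to a lattice-theoretic orbit count, using the theory of nef classes and the action of the Vinberg group $\Gbar_Y$ on $S_Y \otimes \FF_2$.

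First I would establish part (b), since part (a) will build on the same framework. The starting observation is that elliptic fibrations on $Y$ correspond to primitive isotropic nef classes $f \in S_Y$ (the half fibers), and two fibrations are isomorphic precisely when their half fibers lie in the same $\Aut(Y)$-orbit, as recalled in the introduction. The key structural fact is that the numerical class $f$ of a half fiber is \emph{characteristic}: on an Enriques surface every isotropic class satisfies $f^2 = 0$, and primitivity is essential. I would then recall the standard description (due to the theory of the nef cone and the Weyl group action) that every primitive isotropic class in $S_Y$ is $W(Y)$-equivalent to a unique nef one, so that the set of nef primitive isotropic classes is a fundamental domain for the $W(Y)$-action on all primitive isotropic classes. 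Consequently the $\Aut(Y)$-orbits of nef primitive isotropic classes are in bijection with the $G_Y = W(Y) \rtimes \aut(Y)$-orbits of \emph{all} primitive isotropic classes in $S_Y$.

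The crucial reduction is then from $S_Y$ to $S_Y \otimes \FF_2$. Here I would use the fact that $S_Y$ is isometric to the Enriques lattice $U \oplus E_8(-1)$ (or its even unimodular form of signature $(1,9)$), whose reduction mod $2$ carries a quadratic form making $\{\overline{f} : \overline{f}^2 = 0, \overline{f} \neq 0\}$ the set of nonzero isotropic vectors. The main technical step is to show that reduction mod $2$ induces a bijection between $G_Y$-orbits of primitive isotropic classes in $S_Y$ and $\Gbar_Y$-orbits of nonzero isotropic vectors in $S_Y \otimes \FF_2$. This requires two things: that $\overline{f} \neq 0$ for a primitive isotropic $f$ (which follows from primitivity, since $f$ isotropic and $\overline f = 0$ would force $f \in 2 S_Y$), and that two primitive isotropic classes have the same image mod $2$ if and only if they are $G_Y$-equivalent. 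The forward direction is immediate from the definition of $\Gbar_Y$ as the image of $G_Y$; the delicate direction is surjectivity and injectivity of the induced map on orbits, which should follow from the transitivity of $O(S_Y)$ on primitive isotropic vectors of fixed divisibility combined with strong approximation / the known structure of the reduction map $O(S_Y) \to O(S_Y \otimes \FF_2)$.

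For part (a), I would invoke the moduli-theoretic setup of \Cref{sect:moduli-general}: the ramification degree $r_f$ of the forgetful map $\P^e_0$ at $[Y,f]$ measures the local degree, equivalently the number of sheets of $\M_{En,e} \to \M_{En}$ coming together at $[Y,f]$. The key identification is that this ramification index equals the size of the stabilizer quotient, i.e. the number of distinct half fibers $f'$ that become identified with $f$ under the monodromy action governing $\M_{En}$; concretely, the fiber of $\P^e_0$ over $[Y]$ is indexed by $\Gbar_Y$-orbits while the ramification records how many $\Aut(Y)$-orbits (equivalently honest fibrations) collapse into a single $\Gbar_Y$-orbit. I expect the main obstacle to be this last point: correctly matching the \emph{analytic} ramification degree of $\P^e_0$ with the \emph{combinatorial} orbit length $\#(\overline{f} \cdot \Gbar_Y)$. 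This demands a careful comparison of the monodromy group acting on the period domain with $\Gbar_Y$, and a verification that the stabilizer of $\overline f$ in $\Gbar_Y$ accounts exactly for the local sheets — a transversality or étale-locally-trivial argument near $[Y,f]$. Once the orbit–stabilizer bookkeeping is aligned with the moduli-space ramification, formula \eqref{eq:def_ep} follows, and summing $r_f$ over the orbits in \eqref{eq:isom_ell_fibr_as_quotient} recovers the degree $527$.
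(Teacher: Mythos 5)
Your architecture for part (b) is the same as the paper's (fibrations correspond to $\Aut(Y)$-orbits of primitive isotropic nef classes; pass to $G_Y$-orbits on $\Pbar_Y$ via the fundamental-domain property; reduce mod $2$), but both load-bearing steps of the mod-$2$ reduction are left unproven, and one of them is mislocated. First, your claim that classes with equal reduction mod $2$ are $G_Y$-equivalent requires that the kernel of $O(S_Y,\P_Y)\to O(S_Y\otimes\FF_2)$, i.e.\ the $2$-congruence subgroup $G_0$, is contained in $G_Y$ (\Cref{prop:G0}). This is the linchpin of the entire paper, it is not formal, and over $\CC$ its proof goes through the Torelli theorem (congruence-trivial isometries lift to Hodge isometries of the K3 cover and hence come from automorphisms); nothing in ``$\Gbar_Y$ is the image of $G_Y$'' or in properties of the reduction map $O(S_Y)\to O(S_Y\otimes\FF_2)$ yields it. Second, even granting $G_0\subseteq G_Y$, you need injectivity of $\E(\Pbar_Y)/G_0\to\E(S_Y\otimes\FF_2)$, i.e.\ that primitive isotropic classes in $\Pbar_Y$ with the same reduction are $G_0$-equivalent. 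Your appeal to strong approximation does not settle this: Eichler's criterion does not apply directly to $2$-congruence orbits in $E_{10}=U\oplus E_8$ (only one hyperbolic plane splits off), and this statement is exactly the hard computation of Barth--Peters. The paper avoids it by a counting argument: the map is a surjection between finite sets that both have $527$ elements (Barth--Peters for $\E(\Pbar_Y)/G_0$, Kneser for the isotropic vectors of the $\FF_2$-form), hence a bijection. A smaller gloss: the ``unique nef representative'' statement must be proved at the rational isotropic \emph{boundary} rays of $\Pbar_Y$, where the standard interior reflection-group argument fails; this is what \Cref{lem:transitive} accomplishes with a horoball argument.

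For part (a) you correctly name the difficulty --- matching the analytic ramification degree of $\P^e_0$ with the combinatorial orbit length --- but you supply no mechanism for it, and that is where the content lies. The paper's device is the sandwich $\M_{En,v}\to\M_{En,e}\to\M_{En}$ through the Weyl-polarized moduli space: since a Weyl vector has trivial stabilizer in $O(E_{10})$, the two outer maps are Galois quotients by explicit groups, and \Cref{lem:quotients} (for finite group actions $X\to X/H\to X/G$ on a normal complex space, the ramification of the second map at $Hx$ is $\left[\stab(G,x):\stab(H,x)\right]$) converts the ramification of the non-Galois $\P^e_0$ into an orbit--stabilizer computation. One must then identify the stabilizer of the period with $\Gbar_Y$ (again via Torelli and \Cref{prop:G0}) and the conjugated polarized group with $\overline{O(S_Y,f)}$, obtaining $r_f=\left[\Gbar_Y:\Gbar_Y\cap\overline{O(S_Y,f)}\right]$ as in \Cref{thm:branch}, and finally check that $\overline{O(S_Y,f)}$ is the full stabilizer of $\overline{f}$ in $O(S_Y\otimes\FF_2)$ so that this index equals $\#\bigl(\overline{f}\cdot\Gbar_Y\bigr)$. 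None of these steps appears in your sketch, so as it stands your part (a) is a statement of the problem rather than a proof.
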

See \Cref{prop:ADE-type} for how to compute the ADE-type of the singular fibers of the elliptic fibration corresponding to $f$ from $\overline{f}$ and the so called spliting roots $\overline{\Delta}(Y)$ of $Y$.
\begin{remark}
In positive characteristic the existence and geometry of a moduli space of elliptic Enriques surfaces is not clear to us. However, the formula for the ramification degree $r_f$ makes perfect sense in any characteristic. If we take the formula in \cref{eq:def_ep} as the \emph{definition} of $r_f$, then \Cref{thm:intro} and \Cref{prop:intro} (2) continue to hold. See \Cref{thm:elliptic_fibrations} for the precise statement.
\end{remark}
The main idea for a proof in arbitrary characteristic is
\Cref{thm:NefmodAut}, which states that
$\Nef^e_Y/\Aut(Y) \cong \Pbar_Y/G_Y$ where $\Pbar_Y$ is the positive cone of $Y$ together with the rational isotropic rays. While $\Nef^e_Y$ and $\Aut(Y)$ are notoriously difficult to control, $G_Y$ and the positive cone can be handled, even in positive characteristic.
Indeed, $G_Y$ always contains the $2$-congruence subgroup of $O(S_Y)$ (\Cref{prop:G0}) and therefore it suffices to know its image $\Gbar_Y$ in $O(S_Y \otimes \FF_2)$.

\begin{remark}
We call $\Gbar_Y$ the Vinberg group because its index in $O(S_Y \otimes \FF_2)$ is the number of Vinberg chambers contained in the fundamental domain of the action of $\Aut(Y)$ on the effective nef cone of $Y$ appearing in the Morrison-Kawamata-Cone conjecture. See \Cref{corollary:nefmodaut}.
\end{remark}
In \cite{brandhorst_shimada:tautaubar} the first author and Shimada computed $\Aut(Y)$-orbits of smooth rational curves and elliptic fibrations on 182 families of so called $(\tau,\taubar)$-generic Enriques surfaces with computer aided methods. In \Cref{sect:tau-taubar} we give a conceptual proof of their computational results.

More generally, let $h \in E_{10}\cong U\oplus E_8$ with $h^2\geq 0$ and denote by $\M_{En,h}$ the moduli space of complex numerically $h$-quasi-polarized Enriques surfaces. It parametrises pairs consisting of an Enriques surface $Y$ and a nef numerical class $H \in S_Y$ mapping to $h$ under some marking $S_Y \xrightarrow{\sim} E_{10}$. For $h^2=0$, we obtain the moduli space of elliptic Enriques surfaces and for $h=0$ the moduli space of unpolarized Enriques surfaces.
In \cite{gritsenko-hulek} Gritsenko and Hulek show that there are only finitely many moduli spaces $\M_{En,h}$ up to isomorphism although there are infinitely many types of polarisations $h \in E_{10}$.
In \cite{hulek-dutour} Hulek and Dutour-Sikirić computed them along with the degrees of the forgetful maps $\P^h_0\colon \M_{En,h} \mapsto \M_{En,0}$.
Further detailed studies of moduli spaces of polarized Enriques surfaces were carried out in \cite{ciliberto23} and \cite{knutsen}.

Let $v \in E_{10}$ be a so called Weyl vector (see \Cref{sec:vinberg}) and $h \in E_{10}$ in the corresponding Vinberg chamber of $E_{10}$. Then there are finite proper surjective morphisms of normal quasi-projective complex varieties
\[\M_{En,v} \xrightarrow{\P^v_h}\M_{En,h} \xrightarrow{\P^h_0}{\to} \M_{En,0}.\]
The first one is Galois, but depends on the choice of $h$, and the second one is the forgetful map and in general not Galois.
The composition is just $\P^v_0$, hence Galois too. In \Cref{thm:branch} we use the sandwich to compute the ramification indices of $\P^h_0$. In particular, we answer a question posed by Dolgachev and Kondo in their upcoming book \cite[Table 8.3]{enriquesII} on counts of polarisations on general 1-nodal Enriques surfaces.
%This was done in \cite{cdl:enriquesI}. However the formula is wrong. The

\subsection*{Plan of the paper}
In \Cref{sect:preliminaries} we set up terminology for lattices and recall Vinbergs results on the $E_{10}$-root lattice. \Cref{sect:nef-cone} recalls the description of the nef cone of an Enriques surface.
\Cref{sect:cone-conj} deals with the action of the automorphism group on the effective nef cone.
In \Cref{sect:orbits-fibrations} we look at orbits of elliptic fibrations under the automorphism group.
In \Cref{sect:tau-taubar} the results are applied to $(\tau,\taubar)$-generic Enriques surfaces.
In \Cref{sect:moduli-general} we review and extend the construction of numerically quasi-polarized complex Enriques surfaces.
In \Cref{sect:moduli-fibrations} we reinterpret the results in terms of ramification of maps between moduli spaces of Enriques surfaces.

\subsection*{Acknowledgements}
The authors would like to express their gratitude to Igor Dolgachev for asking the right questions and hinting at their answers.
We thank Gebhard Martin, Giacomo Mezzedimi, Klaus Hulek, Matthias Schütt, Ichiro Shimada and Davide Veniani for discussions.

\section{Preliminaries and Notation.}\label{sect:preliminaries}
See \cite{nikulin} for background on lattices and their discriminant groups.
A \emph{lattice} is a finitely generated $\ZZ$-module $L$ equipped with a non-degenerate, $\QQ$-valued symmetric bilinear form denoted by $x.y$.
For $n \in \ZZ$ we denote by $L(n)$ the lattice with the same underlying module but with bilinear form multiplied by $n$.
We denote the \emph{dual lattice} of $L$ by $L^\vee$.
The finite group $L^\vee/L$ is called the discriminant group. We call a vector $r \in L$ of square $r^2=-2$ a \emph{root} of $L$. It defines a reflection $s_r(x)=x+(r.x)r$.
A lattice of signature $(1,*)$ is called hyperbolic. The unique even unimodular lattice of signature $(1,1)$ is called a hyperbolic plane and denoted by $U$.
We follow the convention that $ADE$-root lattices are negative definite.
The orthogonal group of $L$ is denoted by $O(L)$.
We denote by $O^\sharp(L)$ the kernel of $O(L)\to O(L^\vee/L)$.
For subsets $A_1, \dots, A_n \subseteq L \otimes \RR$ we denote by $O(L,A_1, \dots , A_n)=\{f \in O(L) \mid \forall i \in \{1, \dots n\}\colon f(A_i) = A_i\}$ their joint stabiliser. For $L$ a hyperbolic lattice,
$\{x \in L \otimes \RR \mid x^2>0\}$ has two connected components. We fix one of them and call it the positive cone $\P_L$ of $L$.
Let $\Delta \subseteq \{x \in L \mid x^2 <0 \}$ be a subset. A $\Delta$-\emph{chamber} is a connected component of $\mathcal{P}_L \setminus \bigcup_{r \in \Delta} r^\perp$. Any partition
$\Delta = \Delta^+ \sqcup - \Delta^+$ determines a unique chamber $\{ x \in \P_L \mid \forall r \in \Delta^+: x.r>0\}$, which we call \emph{the} chamber defined by $\Delta^+$. We call the elements of $\Delta^+$ positive.

Let $G$ be a group acting on a set $X$.
We denote right actions by $x^g$ for $x \in X$ and $g \in G$. Left actions are denoted $gx$.

\subsection{The Vinberg lattice}\label{sec:vinberg}
\begin{figure}
\begin{tikzpicture}
\tikzset{VertexStyle/.style= {fill=black, inner sep=1.5pt, shape=circle}}
\Vertex[NoLabel,x=0,y=0]{1a}
\Vertex[NoLabel,x=1,y=0]{2a}
\Vertex[NoLabel,x=2,y=0]{3a}
\Vertex[NoLabel,x=3,y=0]{4a}
\Vertex[NoLabel,x=4,y=0]{5a}
\Vertex[NoLabel,x=5,y=0]{6a}
\Vertex[NoLabel,x=6,y=0]{7a}
\Vertex[NoLabel,x=7,y=0]{8a}
\Vertex[NoLabel,x=8,y=0]{9a}
\Vertex[NoLabel,x=6,y=1]{10a}
\Edges(1a,2a,3a,4a,5a,6a,7a,8a,9a)
\Edges(7a,10a)
\tikzset{VertexStyle/.style= {inner sep=1.5pt, shape=circle}}
\Vertex[x=0,y=-0.4]{1}
\Vertex[x=1,y=-0.4]{2}
\Vertex[x=2,y=-0.4]{3}
\Vertex[x=3,y=-0.4]{4}
\Vertex[x=4,y=-0.4]{5}
\Vertex[x=5,y=-0.4]{6}
\Vertex[x=6,y=-0.4]{7}
\Vertex[x=7,y=-0.4]{8}
\Vertex[x=8,y=-0.4]{9}
\Vertex[x=6,y=1.4]{10}
\end{tikzpicture}
\caption{The $E_{10}$ diagram.}\label{figure:E10}
\end{figure}
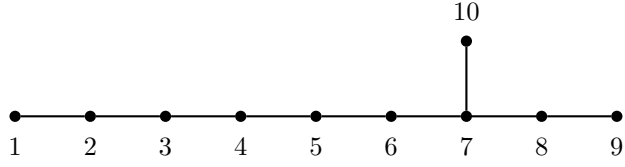
Let $E_{10}$ be the lattice corresponding to the Coxeter-Dynkin diagram in \Cref{figure:E10}.
It is an even unimodular lattice of signature $(1,9)$ and therefore $E_{10}\cong U \oplus E_8$.
Note that if $Y$ is an Enriques surface, then $S_Y\cong E_{10}$.
Let $e_{1}, \dots e_{10}$ be the basis of $E_{10}$ corresponding to its Coxeter-Dynkin diagram.
In \cite{vinberg}, Vinberg showed that $e_{1}, \dots , e_{10}$ constitute a fundamental system of roots of $E_{10}$. In particular,
$W(E_{10}) = \langle s_e | e \in \{e_1,\dots e_{10}\}\rangle$ and moreover $O(E_{10}) = \{\pm 1\} \times W(E_{10})$.
Since $E_{10}$ is unimodular, we find a unique $v \in E_{10}$ with $v.e_1=\dots=v.e_n = 1$. We call it the
\emph{Weyl vector} of the \emph{Vinberg chamber}
\[C = \{ x \in E_{10}\otimes \RR \mid \forall i \in {1,\dots 10}\colon x.e_i \geq 0\}.\]
Since the $e_i$ are a basis of the $E_{10}$ lattice, this convex cone is simplicial and moreover basic.
Indeed, its extremal rays are given by the dual basis $e_1^\vee, \dots e_n^\vee \in E_{10}$ of $e_1,\dots e_n$ where $e_i^\vee.e_j = 1$ for $i=j$ and $0$ else.
We see that $v = e_1^\vee + \dots + e_{10}^{\vee}$ is an interior point and one can calculate that $v^2= 1240$.
Since the chamber $C$ is uniquely determined by $v$, we write $C(v)=C$.  It is a fundamental domain for the action of the Weyl group on the positive cone because the $e_i$ for a fundamental system of roots. We call any integral $v'$ in the positive cone of \(E_{10}\) a Weyl vector if the set $\{x \in E_{10} \mid x^2 = -2, x.v'=1\}$ is a fundamental root system. This is the case if and only if they form an $E_{10}$ diagram.
This sets up bijections between fundamental root systems, chambers and Weyl vectors. The Weyl group acts simply transitively on each of these sets in a compatible way.

We equip $E_{10} \otimes \FF_2$ with a non-degenerate $\FF_2$-valued quadratic form $q$ defined by $q(x)= x^2/2 \mod 2$.
It can be interpreted as the discriminant form of $E_{10}(2)$ by identifying $\tfrac{1}{2}E_{10}/E_{10}$ with $E_{10}\otimes \FF_2$. The quadratic space $E_{10} \otimes \FF_2$ has a totally isotropic subspace of dimension $5$.

\section{The nef cone.} \label{sect:nef-cone}
In this section we recall the description of the nef cone of an Enriques surface in the form that suits our purposes. For the reader's convenience we give most proofs.

In what follows $Y$ denotes an Enriques surface over an algebraically closed field of characteristic $p\neq 2$ with covering K3 surface $X$, covering map $\pi\colon X \to Y$ and covering involution $\epsilon \in \Aut(X)$.
Denote by $S_Y\cong E_{10}\cong U \oplus E_8$ and $S_X$ the numerical lattices of $Y$ respectively $X$.

\subsection{The ample cone.}
We shall derive the description of the ample cone of the Enriques surface $Y$ from its covering K3 surface $X$.
\begin{lemma}\label{lem:ampleY}
A divisor class $h \in S_Y$ is ample on $Y$ if and only if $\pi^*(h)$ is ample on $X$.
\end{lemma}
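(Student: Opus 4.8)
The plan is to reduce everything to the Nakai--Moishezon criterion, which involves only intersection numbers and hence makes ampleness a property of the numerical class $h \in S_Y$ (respectively $\pi^*h \in S_X$). Both $X$ and $Y$ are smooth projective surfaces and $\pi$ is finite étale of degree $2$, so I may freely use the degree-two multiplicativity $\pi^*D.\pi^*E = 2\,(D.E)$ for classes on $Y$, together with the projection formula $\pi^*D.C' = D.\pi_*C'$ for a curve $C'$ on $X$. Both identities follow from $\pi_*\pi^* = 2$ and the projection formula, and they reduce the lemma to a positivity bookkeeping.

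For the forward implication, suppose $h$ is ample. Then $(\pi^*h)^2 = 2h^2 > 0$, and for every irreducible curve $C' \subseteq X$ the pushforward $\pi_*C'$ is a nonzero effective divisor on $Y$, so $\pi^*h.C' = h.\pi_*C' > 0$ by ampleness of $h$. By Nakai--Moishezon $\pi^*h$ is ample; this is just the standard fact that the pullback of an ample class along a finite morphism is ample.

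For the reverse implication, which is the actual content, suppose $\pi^*h$ is ample. Then $2h^2 = (\pi^*h)^2 > 0$ gives $h^2 > 0$. For every irreducible curve $C \subseteq Y$ the pullback $\pi^*C$, which is the reduced preimage $\pi^{-1}(C)$ since $\pi$ is étale, is a nonzero effective divisor on $X$, whence $0 < \pi^*h.\pi^*C = 2\,(h.C)$ and so $h.C > 0$. Again Nakai--Moishezon yields that $h$ is ample.

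The only genuinely non-formal point is this reverse direction: a priori ampleness need not descend along $\pi$, and observing merely that $\pi^*h$ is $\epsilon$-invariant does not by itself produce ampleness of $h$. The argument above circumvents descent by testing positivity of $h$ against curves on $Y$ through their effective pullbacks, where the hypothesis on $\pi^*h$ applies directly. I expect the only care needed is to confirm that $\pi^*C$ is a nonzero effective divisor for every curve $C \subseteq Y$, which follows from surjectivity of $\pi$ (reducedness from étaleness being a convenience rather than a necessity here).
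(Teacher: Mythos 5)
Your proof is correct and follows essentially the same route as the paper: both directions are reduced to the Nakai--Moishezon criterion, testing $\pi^*h$ against curves on $X$ via the projection formula ($\pi^*h.C' = h.\pi_*C'$) and testing $h$ against curves on $Y$ via pullback ($\pi^*h.\pi^*C = 2\,h.C$). Your version is in fact slightly more careful than the paper's, which leaves the positivity of the self-intersections implicit and only spells out the curve-positivity checks.
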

\begin{proof}
Let $h \in S_Y$ be ample and $C$ be any curve on $X$. Then $\pi_*(C)$ is a curve on $Y$ and
by the projection formula $\pi^*(h).C = h.\pi_*(C)>0$, which is positive because $h$ is ample.
Thus $\pi^*(h)$ is ample by the Nakai-Moishezon criterion.
Conversely if $h \in S_Y$ is such that $\pi^*(h)$ is ample and $C$ is any curve on $Y$,
then $\pi^*(C)$ is a curve on $Y$ and therefore $h.C = \frac{1}{2} \pi^*(h),\pi^*(C)>0$. Thus $h$ is ample.
\end{proof}

Since the lattice $S_Y$ is hyperbolic, the cone $\{ x \in S_Y \mid x^2>0\}$ has two connected components.
The one containing the ample cone is called the \emph{positive cone} $\mathcal{P}_Y$.

We want to describe the ample cone as a subset of the positive cone $\mathcal{P}_Y$. Let $C \subseteq Y$ be a smooth rational curve.
Then $C^2=-2$ and the preimage $\pi^{-1}(C)$ in $X$ splits into two disjoint smooth rational curves $\tilde{C}$ and $\epsilon(\tilde{C})$. This motivates the following definition.

\begin{definition}
We say that a root $r \in S_Y$ \emph{splits} in $S_X$ if there exists a root $\tilde{r}$ of $S_X$ such that $\pi^*(r)= \tilde{r} + \epsilon^*(\tilde{r})$. % where $\epsilon$ is the covering involution of the K3 double cover $\pi\colon X \to Y$.
We denote the set of splitting roots by $\Delta(Y)\subseteq S_Y$
and the subset of effective splitting roots by $\Delta(Y)^+$.
\end{definition}
The lattice $S_+:=\pi^*(S_Y)\cong S_Y(2)$ is a primitive sublattice of $S_X$, and we set $S_{-} := S_{+}^\perp \subseteq S_X$, which is also a primitive sublattice. Note that $S_\pm = \{x \in S_X \mid \epsilon^*(x) = \pm x\}$.
Since $S_+$ contains an ample class, its signature is $(1,*)$ and hence $S_-$ is negative definite. Moreover, $S_-$ does not contain any roots. Indeed, if $r$ is a root in $S_-$, then $r$ or $-r$ is effective by Riemann-Roch. Choose $a \in S_Y$ ample. Then $\pi^*(a).r=0$, contradicting that $\pi^*(a)$ is ample.
\begin{lemma}\label{lem:splitting-root}
A root $r \in S_Y$ splits if and only if
there exists $v \in S_{-}$ with $ v^2=-4$ and  $(\pi^*(r)+v)/2 \in S_X$.
\end{lemma}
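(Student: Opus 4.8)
The plan is to treat the two implications separately, in each case producing the required object by an explicit formula and then verifying its defining properties by a short bilinear computation. Throughout I would use three facts already available: $\pi^*(r) \in S_+$ is $\epsilon^*$-invariant, the summands $S_+$ and $S_-$ are mutually orthogonal $\pm 1$-eigenspaces of $\epsilon^*$, and $\pi^*$ scales the form by $2$, so in particular $\pi^*(r)^2 = 2r^2 = -4$.

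For the forward implication, suppose $r$ splits, so that there is a root $\tilde{r} \in S_X$ with $\pi^*(r) = \tilde{r} + \epsilon^*(\tilde{r})$. The natural candidate is $v := \tilde{r} - \epsilon^*(\tilde{r})$. Since $\epsilon^*(v) = -v$, it lies in $S_-$, and by construction $(\pi^*(r) + v)/2 = \tilde{r} \in S_X$. It then remains only to check $v^2 = -4$. Expanding $\pi^*(r)^2 = (\tilde{r} + \epsilon^*(\tilde{r}))^2 = -4 + 2\,\tilde{r}.\epsilon^*(\tilde{r})$ and comparing with $\pi^*(r)^2 = -4$ forces $\tilde{r}.\epsilon^*(\tilde{r}) = 0$; hence $v^2 = (\tilde{r} - \epsilon^*(\tilde{r}))^2 = -4 - 2\,\tilde{r}.\epsilon^*(\tilde{r}) = -4$, as required.

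For the converse, suppose $v \in S_-$ satisfies $v^2 = -4$ and $\tilde{r} := (\pi^*(r) + v)/2 \in S_X$. Using $\pi^*(r) \perp v$ (orthogonality of $S_+$ and $S_-$) I compute $\tilde{r}^2 = \tfrac14\big(\pi^*(r)^2 + v^2\big) = \tfrac14(-4 - 4) = -2$, so that $\tilde{r}$ is a root of $S_X$. Finally, $\epsilon^*(\tilde{r}) = (\pi^*(r) - v)/2$ because $\pi^*(r)$ is fixed and $v$ is negated by $\epsilon^*$, whence $\tilde{r} + \epsilon^*(\tilde{r}) = \pi^*(r)$; thus $r$ splits.

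There is no genuine obstacle here: the whole content is the observation that the $\pm 1$-eigenspace decomposition of $\epsilon^*$ translates the splitting condition $\pi^*(r) = \tilde{r} + \epsilon^*(\tilde{r})$ into the conditions on $\tilde{r} \pm \epsilon^*(\tilde{r})$ recorded in the statement. The only point deserving care is the normalisation $\pi^*(r)^2 = -4$, which is exactly what pins $v^2$ down to $-4$ (equivalently, forces $\tilde{r}.\epsilon^*(\tilde{r}) = 0$) rather than to some other value.
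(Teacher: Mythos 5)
Your proof is correct and follows essentially the same route as the paper: the forward direction takes $v = \tilde{r} - \epsilon^*(\tilde{r})$ and the converse verifies $\tilde{r} = (\pi^*(r)+v)/2$ is a root recombining to $\pi^*(r)$. The only cosmetic difference is that in the forward direction you pin down $v^2=-4$ by first extracting $\tilde{r}.\epsilon^*(\tilde{r})=0$, whereas the paper gets it in one step from $-2=\tilde{r}^2=\tfrac{1}{4}\bigl(\pi^*(r)^2+v^2\bigr)$ using the orthogonality of $S_+$ and $S_-$; the two computations are equivalent.
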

\begin{proof}
Let $r \in S_Y$ be a splitting root. Then there exists a root $\tilde{r} \in S_X$ with $\pi^*(r) = \tilde{r}+\epsilon^*(\tilde{r}) \in S_+$. %In particular $\tilde{r}.\epsilon^*(\tilde{r})=\frac{1}{2}\left((\pi^*(r))^2-\tilde{r}^2-(\epsilon^*(\tilde{r}))^2\right)=\frac{1}{2}(-4+2+2)=0$.
Set $v = \tilde{r} - \epsilon^*(\tilde{r}) \in S_{-}$. Then $(\pi^*(r) +v)/2 = \tilde{r} \in S_X$ and $-2 = \tilde{r}^2 =\frac{1}{4}\left(\pi^*(r)^2 + v^2\right) = -1 + \frac{1}{4}v^2$ gives $v^2 = -4$.

Conversely, let $r \in S_Y$ be a root. If there exists a $v \in S_{-}$ with $v^2=-4$ and $\tilde{r}:=(\pi^*(r)+v)/2 \in S_X$, then $\tilde{r}^2=\frac{1}{4}\left(\pi^*(r)^2+v^2\right)=-2$ and $\tilde{r} + \epsilon^*(\tilde{r}) =\pi^*(r)$. Thus $r$ splits in $S_X$.
\end{proof}
\begin{lemma}
%(cf. \cite[Proposition 2.2.1]{cdl:enriquesI}).
The ample cone of $Y$ is the $\Delta(Y)$-chamber defined by $\Delta(Y)^+$.
\end{lemma}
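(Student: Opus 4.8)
The plan is to pull the whole question back to the covering K3 surface $X$ via \Cref{lem:ampleY}, which reduces ampleness of $h \in S_Y$ to ampleness of $\pi^*(h)$ on $X$. On the K3 surface the ample cone has the clean description $\{x \in \mathcal{P}_X \mid x.[\tilde C] > 0 \text{ for every smooth rational curve } \tilde C \subseteq X\}$, and for $h \in \mathcal{P}_Y$ one has $\pi^*(h) \in \mathcal{P}_X$ automatically, since $\pi^*(h)^2 = 2h^2 > 0$ and $\pi^*$ sends ample classes to ample classes. Hence $h \in \mathcal{P}_Y$ is ample if and only if $\pi^*(h).[\tilde C] > 0$ for every smooth rational curve $\tilde C$ on $X$, and the task becomes to translate each such inequality into an inequality on $S_Y$.

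First I would analyze a single smooth rational curve $\tilde C \subseteq X$. Since $\epsilon$ is fixed-point-free and $\mathbb{P}^1$ admits no fixed-point-free involution, the curve $\epsilon(\tilde C)$ is distinct from $\tilde C$; thus $[\tilde C] + \epsilon^*[\tilde C]$ is $\epsilon^*$-invariant and lies in $S_+ = \pi^*(S_Y)$, so I may write $\pi^*(s) = [\tilde C] + \epsilon^*[\tilde C]$ for a unique $s \in S_Y$. Decomposing $[\tilde C]$ into its $\epsilon^*$-invariant and anti-invariant parts and using $\pi^*(h) \perp S_-$ together with $\pi^*(h).\pi^*(s) = 2\,h.s$ yields the key identity $\pi^*(h).[\tilde C] = h.s$. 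Setting $c := [\tilde C].\epsilon^*[\tilde C] \geq 0$, a direct computation gives $s^2 = -2 + c$; since $S_Y \cong E_{10}$ is even, $s^2$ is even and hence $c$ is even. This parity observation makes the dichotomy clean: either $c = 0$, in which case $s^2 = -2$, the preimage $\pi^{-1}(\pi(\tilde C)) = \tilde C \sqcup \epsilon(\tilde C)$ is disjoint, and $s = [\pi(\tilde C)]$ is the class of a smooth rational curve on $Y$, hence an effective splitting root $s \in \Delta(Y)^+$; or $c \geq 2$, in which case $s^2 \geq 0$ and $s = [\pi(\tilde C)]$ is a nonzero effective class of non-negative self-intersection.

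With this dichotomy both inclusions follow. If $h$ is ample then $h.r > 0$ for every nonzero effective class, in particular for every $r \in \Delta(Y)^+$, so the ample cone is contained in the chamber defined by $\Delta(Y)^+$. Conversely, suppose $h \in \mathcal{P}_Y$ satisfies $h.r > 0$ for all $r \in \Delta(Y)^+$. Then for each smooth rational curve $\tilde C$ on $X$ the value $\pi^*(h).[\tilde C] = h.s$ is positive: when $c = 0$ because $s \in \Delta(Y)^+$, and when $c \geq 2$ because $s$ is a nonzero effective class with $s^2 \geq 0$, which forces $h.s > 0$ for every $h \in \mathcal{P}_Y$ by the Hodge index theorem. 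Thus $\pi^*(h)$ is ample, so $h$ is ample, giving the reverse inclusion and hence equality of the two cones.

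I expect the main obstacle to be the case $c \geq 2$: a priori a smooth rational curve $\tilde C$ on the K3 cover could contribute a genuine new wall to the ample cone of $Y$, and the argument hinges on showing that it does not. This is precisely where the evenness of $E_{10}$ (ruling out the borderline value $c = 1$, i.e.\ $s^2 = -1$) and the Hodge index positivity of effective classes of non-negative square against the positive cone do the work, ensuring that only the $c = 0$ curves, namely the splitting roots, cut out the ample cone.
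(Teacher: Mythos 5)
Your proof is correct, and its technical core differs from the paper's even though both begin the same way, by reducing to the K3 cover via \Cref{lem:ampleY}. The paper uses the description of the ample cone of $X$ as the $\Delta(X)$-chamber of effective roots and determines which root hyperplanes meet $\pi^*(\P_Y)$ at all: decomposing a root $r = r_+ + r_-$ with $r_\pm \in S_\pm^\vee$, negative definiteness of $\pi^*(h)^\perp \cap (S_+\otimes\RR)$ forces $r_+^2<0$, the absence of roots in $S_-$ rules out $r_+=0$, and divisibility by $4$ of squares in $S_+\cong E_{10}(2)$ forces $r_+^2=r_-^2=-1$, so only splitting-root walls survive. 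You work instead with irreducible $(-2)$-curves and the pushforward $s=\pi_*[\tilde C]$ (which is the paper's $2r_+$ read back in $S_Y$), and your evenness-of-$E_{10}$ step is the same parity input as the paper's divisibility-by-$4$ step, just packaged on the $S_Y$ side. The genuine differences are two. First, you need geometric facts that the paper's purely lattice-theoretic argument avoids: fixed-point-freeness of $\epsilon$ and the absence of fixed-point-free involutions of $\PP^1$ (to get $\tilde C\neq\epsilon(\tilde C)$ and $c\geq 0$); in exchange you avoid the dual-lattice bookkeeping and the fact that $S_-$ contains no roots, and you only ever deal with irreducible curves rather than arbitrary effective roots. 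Second, you handle the curves with $c\geq 2$ by showing their Nakai-type inequalities hold automatically on all of $\P_Y$ (the effective class $s$ with $s^2\geq 0$ lies in the closure of $\P_Y$, hence pairs positively with the open cone), whereas the paper proves the stronger statement that those walls never meet $\pi^*(\P_Y)$ at all --- the form one wants when one later needs that the $\Delta(X)$-chamber structure restricts to the $\Delta(Y)$-chamber structure on $\pi^*(\P_Y)$. Your route is more elementary and more explicit about both inclusions (the paper leaves the final assembly of the chamber equality implicit); the paper's buys a cleaner lattice-theoretic statement that does not mention curves on $X$ at all.
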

\begin{proof}
Let $\Delta(X)$ be the set of roots of $S_X$ and \(\Delta^+(X)\) its subset consisting of effective roots.
It is known \cite[VIII (3.9)]{bhpv:compact-complex-surfaces} that the ample cone of $X$ is the $\Delta(X)$-chamber defined by $\Delta^+(X)$ .
By \Cref{lem:ampleY} the ample cone of $Y$ can be seen as the intersection of the ample cone of $X$ with $S_+\otimes \RR$.

We calculate which root hyperplanes $r^\perp\subseteq S_X\otimes\RR$ actually meet $\pi^*(\P_Y)$.
Let $h \in \P_Y$ and $r \in \Delta^+(X)$ with $\pi^*(h).r=0$. Since $h' := \pi^*(h) \in S_+$, $h'.r = h'.r_+$, where
$r_\pm \in S_\pm^\vee$ denotes the corresponding orthogonal projection of $r$.
Then $r_+$ lies in the orthogonal complement $h'^\perp \cap S_+^\vee$ of $h'$. Since it is negative definite, $r_+^2 \leq 0$. If $r_+^2=0$, then $r=r_-$ is a root in $S_X \cap S_-^\vee = S_-$ which is absurd.
Now $0 > r_+^2 \geq r_+^2+r_-^2= r^2 = -2$.
If $r_+^2=-2$, then $r_-^2=0$; hence $r=r_+ \in S_+^\vee\cap S_X = S_+ \cong E_{10}(2)$. But since $E_{10}$ is an even lattice, any class $x$ in $E_{10}(2)$ satisfies $4 \mid x^2$, which contradicts $r^2=-2$. Thus $r_+^2>-2$.
Since $2r_+ \in S_+\cong E_{10}(2)$,  $(2r_+)^2$ is divisible by $4$. Thus $r_+^2=-1$
and $r_-^2 =-2 - r_+^2= -1$. This shows that the preimage $(\pi^*)^{-1}(r_+) \in \Delta^+(Y)$ is a splitting root.
\end{proof}

\subsection{The effective nef  cone}
Recall that an $\RR$-divisor is called \emph{nef} if it has non-negative intersection with every curve. The set of nef divisors is the closure of the ample cone.
To assure that the action of the automorphism group has a rational polyhedral fundamental domain,
we replace the nef cone $\Nef(Y)$ by its effective version:
the \emph{effective nef cone} is the intersection of the nef cone with the effective cone of $Y$.
It is the convex cone spanned by the intersection of the nef cone and $S_Y$.
We let $h\in S_Y$ be ample and
\[\Pbar_Y = \P_Y \cup \RR_+ \{r \in L \otimes \QQ \mid r^2 = 0, h.r>0\}\]
be the convex cone given by $\P_Y$ and the rays of rational classes in its boundary (excluding the apex $0$).
We denote by $\Nef^{e}_Y$ the closure of the ample cone in $\Pbar_Y$,
which is the effective nef cone with the origin removed. By abuse of terminology we call it the effective nef cone as well.
Inside $\P_Y$, the nef cone and its effective version agree, i.e.
$\Nef(Y) \cap \P_Y = \Nef^e_Y \cap \P_Y$. In particular, they have the same facets. But they may have different extremal rays.

Let $\Delta\subseteq S_Y$ be a set of roots. For a root $r \in \Delta$, recall that $s_r(x) = x + (x.r)r$ defines a reflection into the hyperplane $r^\perp$. The \emph{Weyl}-group $W(\Delta)=\langle s_r \mid r \in \Delta \rangle$ is the subgroup of $O(S_Y)$ generated by all such reflections.
Let $\aut(Y)$ denote the image of the natural map $\psi\colon \Aut(Y) \to O(S_Y)$.
The kernel of $\psi$ is finite of order at most $4$ \cite{mukai_namikawa:numerically_trivial,dolgachev_martin:numericallytrivial}.

Recall that for a subset $A \subseteq S_Y\otimes \RR$, we denote by $O(S_Y,A)$ the set of isometries preserving $A$. Define  $O(S_Y,\P_Y, \pi)$ as the image of $O(S_X,S_+,\P_X) \to O(S_Y,\P_Y)$.

\begin{remark}\label{remark:split_weyl}
Let $r \in \Delta(Y)$ be a split root. Then $\pi^*r=\tilde{r} + \epsilon^*(\tilde{r})$ for some root $\tilde{r} \in S_X$.
We remark that $s_{\tilde{r}}\circ s_{\epsilon^*(\tilde{r})}$ commutes with $\epsilon^*$ and agrees with the reflection $s_{r}$ on $S_Y(2) \cong S_+$.
\end{remark}

For an Enriques surface $Y$ we define its Weyl group as $W(Y)=W(\Delta(Y))$.
It is a subgroup of $O(S_Y,\P_Y, \pi)$ by \Cref{remark:split_weyl}. It is moreover normal because the definition of $\Delta(Y)$ depends only on $\pi^*\colon S_Y \to  S_X$.
\begin{definition}
The \emph{modular stabiliser} of $Y$ is defined as
\[G_Y := W(Y) \rtimes \aut(Y) \subseteq O(S_Y, \P_Y, \pi).\]
\end{definition}
The modular stabiliser $G_Y$ is our key player. It will turn out to be much easier to control than $\aut(Y)$ or $W(Y)$.
We now characterise $G_Y$ in terms of the K3 cover $\pi\colon X \to Y$.
To this end let $\Aut(X,\epsilon)$ be the centraliser of $\epsilon$ in $\Aut(X)$ and $\aut(X,\epsilon)$ its image in $O(S_X)$. Note that $\Aut(Y) \cong \Aut(X,\epsilon)/\langle \epsilon \rangle$.
Let $W(X)\subseteq O(S_X)$ be the subgroup generated by the reflections on roots of $X$, $W(X,\epsilon)$ the centraliser of $\epsilon^*$ in $W(X)$, and set
\begin{equation}\label{def:GX}
G_{X|Y} \defeq W(X,\epsilon)\rtimes \aut(X,\epsilon) \subseteq O(S_X).
\end{equation}
We see that $W(X,\epsilon)$ restricts to $W(Y)$ under the identification of $S_+$ with $S_Y$.

Let $G_\pm$ be the image of $G_{X|Y}$ in $O(S_{X\pm})$.
\begin{proposition}\label{prop:GYG_+}
Let $Y$ be an Enriques surface. Then $G_+ \cong G_Y$ under the natural identification $O(S_Y)\cong O(S_{+})$.
\end{proposition}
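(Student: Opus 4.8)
The plan is to realize the asserted isomorphism as \emph{restriction to the $(+1)$-eigenlattice $S_+$ of $\epsilon^*$}. Every element of $G_{X|Y}=W(X,\epsilon)\rtimes\aut(X,\epsilon)$ commutes with $\epsilon^*$ by construction, hence preserves the primitive sublattice $S_+=\{x\in S_X:\epsilon^*(x)=x\}$. Restriction therefore defines a homomorphism $\rho_+\colon G_{X|Y}\to O(S_+)$ whose image is, by definition, $G_+$. Since $S_+\cong S_Y(2)$ and rescaling the form does not change the orthogonal group, $O(S_+)=O(S_Y)$, and this is the natural identification in the statement, induced by the isometry $\pi^*\colon S_Y\xrightarrow{\sim}S_+$. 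The proposition will follow once I show that $\rho_+$ carries the two factors of $G_{X|Y}$ onto the two factors of $G_Y$, i.e. $\rho_+(\aut(X,\epsilon))=\aut(Y)$ and $\rho_+(W(X,\epsilon))=W(Y)$. Granting these, writing a general element of $G_{X|Y}$ as $w\cdot a$ with $w\in W(X,\epsilon)$, $a\in\aut(X,\epsilon)$ gives $G_+=\rho_+(G_{X|Y})=W(Y)\cdot\aut(Y)$, and this set-product equals the subgroup $G_Y=W(Y)\rtimes\aut(Y)$ because $W(Y)$ is normal in $O(S_Y,\P_Y,\pi)$.

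For the automorphism factor I would use the commutative square $\psi\circ\pi=\pi\circ\tilde\psi$ attached to a lift $\tilde\psi\in\Aut(X,\epsilon)$ of $\psi\in\Aut(Y)$: it yields $\pi^*\circ\psi^*=\tilde\psi^*\circ\pi^*$, so the restriction $\tilde\psi^*|_{S_+}$ corresponds to $\psi^*$ under $\pi^*$. As $\tilde\psi$ ranges over $\Aut(X,\epsilon)$ and $\psi$ over $\Aut(Y)$ via $\Aut(Y)\cong\Aut(X,\epsilon)/\langle\epsilon\rangle$, and $\epsilon$ restricts to the identity on $S_+$, this gives exactly $\rho_+(\aut(X,\epsilon))=\aut(Y)$.

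The Weyl factor is the heart of the matter. The inclusion $W(Y)\subseteq\rho_+(W(X,\epsilon))$ is immediate from \Cref{remark:split_weyl}: for each split root $r$ the product $s_{\tilde r}\circ s_{\epsilon^*(\tilde r)}$ lies in $W(X,\epsilon)$ and restricts to $s_r$, and such reflections generate $W(Y)$. The reverse inclusion $\rho_+(W(X,\epsilon))\subseteq W(Y)$ is the \emph{main obstacle}, since an element of $W(X,\epsilon)$ need not be a product of reflections in $\epsilon^*$-invariant roots. Here I would argue with chambers. The ample cone $A_X$ of $X$ is a Weyl chamber for $W(X)$ and is $\epsilon^*$-invariant, as it contains the $\epsilon$-invariant ample class $\pi^*h$. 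For $g\in W(X,\epsilon)$, the chamber $g(A_X)$ is again $\epsilon^*$-invariant. By \Cref{lem:ampleY} the only root hyperplanes of $X$ meeting the interior of $\P_Y$ are those induced by splitting roots, so $\rho_+(g)$ maps the fundamental chamber $A_Y:=A_X\cap(S_+\otimes\RR)$ of the $\Delta(Y)$-arrangement to another $\Delta(Y)$-chamber. By simple transitivity of $W(Y)=W(\Delta(Y))$ on these chambers, choose $w'\in W(Y)$ with $w'(A_Y)=\rho_+(g)(A_Y)$ and lift it to $\tilde w'\in W(X,\epsilon)$ using the inclusion already proven. Then $g':=\tilde w'^{-1}g\in W(X,\epsilon)$ satisfies $g'(A_X)\cap(S_+\otimes\RR)=A_Y$. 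Any interior point $p\in A_Y$ is ample on $X$ by \Cref{lem:ampleY}, hence lies in the interior of $A_X$ and on no root hyperplane; since $p\in g'(A_X)$ as well, the two $\epsilon^*$-invariant chambers share an interior point, forcing $g'(A_X)=A_X$ and thus $g'=\mathrm{id}$ by simple transitivity of $W(X)$. Therefore $\rho_+(g)=w'\in W(Y)$.

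Finally I would assemble the two factor identities to get $G_+=W(Y)\cdot\aut(Y)=G_Y$; recording that $W(Y)\cap\aut(Y)=\{1\}$ (an element of $\aut(Y)$ fixes the nef chamber, whereas $W(Y)$ acts freely on chambers) confirms that $\rho_+$ matches the semidirect-product structure of $G_{X|Y}$ with that of $G_Y$. The single delicate point throughout is the chamber comparison in the Weyl step, and it rests precisely on the ample-cone computation preceding the statement, which isolates the splitting-root walls as the only ones meeting the interior of $\P_Y$.
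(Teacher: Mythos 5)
Your proof is correct, but it cannot be compared line-by-line with the paper's, because the paper gives none: its entire proof of \Cref{prop:GYG_+} is the single sentence that the proof of \cite[Proposition 3.2]{brandhorst_shimada:tautaubar} works in positive characteristic as well. What you have written is a self-contained replacement for that citation, and it does the job. Your decomposition into the two factor identities $\rho_+(\aut(X,\epsilon))=\aut(Y)$ (via the lifting $\Aut(Y)\cong\Aut(X,\epsilon)/\langle\epsilon\rangle$) and $\rho_+(W(X,\epsilon))=W(Y)$ (easy inclusion from \Cref{remark:split_weyl}; hard inclusion via chambers) is the natural one, and the chamber argument is sound: $g(A_X)\cap(S_+\otimes\RR)$ is a $\Delta(Y)$-chamber, simple transitivity of $W(Y)$ and the lift of $w'$ reduce to an element $g'\in W(X,\epsilon)$ whose chamber $g'(A_X)$ shares a point with $A_X$, forcing $g'=\id$ by simple transitivity of $W(X)$, and the final assembly $G_+=W(Y)\cdot\aut(Y)=G_Y$ is legitimate since $\aut(Y)$ normalizes $W(Y)$ and meets it trivially (\Cref{prop:fundamental_domain}). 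What your route buys is exactly the point the paper asserts without argument: every ingredient you use --- \Cref{lem:ampleY} (proved by Nakai--Moishezon), the wall computation isolating the splitting-root hyperplanes, Weyl-group transitivity on chambers, and the lifting of automorphisms to the K3 cover --- is available over any algebraically closed field of characteristic $\neq 2$, so the characteristic-freeness of the statement becomes visible rather than deferred. Two cosmetic caveats, neither a gap in substance: the wall computation you invoke is not \Cref{lem:ampleY} itself but lives in the proof of the unlabeled lemma identifying the ample cone of $Y$ with the $\Delta(Y)$-chamber defined by $\Delta(Y)^+$ (your closing sentence acknowledges the right source, but the in-line citation is off); and the assertion that $\rho_+(g)$ carries $A_Y$ to a $\Delta(Y)$-chamber compresses a short two-inclusion convexity argument showing that the trace on $\pi^*(\P_Y)$ of a $\Delta(X)$-chamber meeting $\pi^*(\P_Y)$ is exactly, and not merely contained in, a $\Delta(Y)$-chamber.
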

\begin{proof}
The proof of \cite[Proposition 3.2]{brandhorst_shimada:tautaubar} works in positive characteristic as well.
\end{proof}

\begin{proposition}\label{prop:fundamental_domain}
Let $Y$ be an Enriques surface. Then:
\begin{enumerate}
    \item The effective nef cone $\Nef^{e}_Y$ is a fundamental domain for the action of the Weyl group $W(Y)$ on $\Pbar_Y$;
    \item The action of $G_Y$ on $\Pbar_Y$ preserves the set of $\Delta(Y)$-chambers and $\aut(Y)$ is equal to the stabiliser of $\Nef^{e}_Y$ in $G_Y$.
\end{enumerate}
\end{proposition}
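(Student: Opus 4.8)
The plan is to treat (1) as an instance of the theory of reflection groups acting on a cone, and then to deduce (2) from the resulting fundamental-domain property together with the fact that all of $G_Y$ preserves the set of splitting roots $\Delta(Y)$.

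For (1), recall that by the description of the ample cone as the $\Delta(Y)$-chamber defined by $\Delta(Y)^+$, the effective nef cone $\Nef^e_Y$ is the closure in $\Pbar_Y$ of one connected component of $\P_Y \setminus \bigcup_{r \in \Delta(Y)} r^\perp$. First I would check that the family $\{r^\perp : r \in \Delta(Y)\}$ is locally finite on $\P_Y$: if $r^\perp$ meets a small neighbourhood of an interior point $p$ with $p^2>0$, then $|r.p| \le \varepsilon$, and since $S_Y$ is hyperbolic only finitely many roots satisfy a bound on $|r.p|$. Next I would verify that $\Delta(Y)$ is stable under $W(Y)$: by \Cref{remark:split_weyl} each generator $s_r$ lifts to $s_{\tilde r}s_{\epsilon^*\tilde r} \in W(X,\epsilon)$, which permutes the roots of $S_X$ and commutes with $\epsilon^*$, hence carries a split root to a split root; so $s_r(\Delta(Y)) = \Delta(Y)$ and $W(Y)$ permutes the $\Delta(Y)$-chambers. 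With local finiteness and $W(Y)$-stability in hand, the standard theory of reflection groups acting on a cone gives that $W(Y)$ acts simply transitively on the set of chambers and that the closure of the fundamental chamber inside $\P_Y$ is a fundamental domain for $W(Y)$ acting on $\P_Y$.

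It then remains to extend the fundamental-domain property across the rational isotropic rays of $\Pbar_Y$. Fix an ample class $a$, and let $f$ be a primitive isotropic class with $a.f>0$. The orbit $W(Y)f$ consists of nonzero classes of $\overline{\P_Y}$, so the values $\{(wf).a : w \in W(Y)\}$ are positive reals; since a bounded slab of $\overline{\P_Y}$ contains finitely many lattice points, this set is discrete and attains a minimum at some $w_0 f$. If $w_0 f$ were not nef there would be $r \in \Delta(Y)^+$ with $(w_0 f).r<0$, and then $(s_r w_0 f).a = (w_0 f).a + \bigl((w_0 f).r\bigr)(r.a) < (w_0 f).a$ because $r.a>0$, contradicting minimality. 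Hence $w_0 f \in \Nef^e_Y$, so every rational isotropic ray meets $\Nef^e_Y$, while uniqueness of the representative on the interior is exactly simple transitivity on chambers.

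For (2), I would first observe that $G_Y$ preserves $\Delta(Y)$ and hence permutes the chambers: $W(Y)$ does so by the computation above, and $\aut(Y)$ does so because each of its elements is induced by an automorphism of $Y$, which lifts to an automorphism of $X$ commuting with $\epsilon$ and therefore acts on $S_X$ by an isometry preserving roots and commuting with $\epsilon^*$, so it again sends split roots to split roots. For the stabiliser statement, $\aut(Y)$ preserves the ample cone and thus fixes $\Nef^e_Y$. Conversely, write an element of $G_Y$ stabilising $\Nef^e_Y$ as $w\phi$ with $w \in W(Y)$ and $\phi \in \aut(Y)$; since $\phi$ already fixes $\Nef^e_Y$, so does $w$, hence $w$ fixes the closed fundamental chamber and its interior, forcing $w=\id$ by simple transitivity, so $w\phi = \phi \in \aut(Y)$. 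I expect the main obstacle to lie entirely in (1): justifying that the reflection-group machinery applies (local finiteness of the root hyperplanes and simple transitivity on chambers) and then pushing the fundamental-domain property from the open cone $\P_Y$ out to the rational isotropic boundary rays of $\Pbar_Y$, for which the discreteness and minimisation argument is the crucial input.
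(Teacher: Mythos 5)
Your proposal is correct and follows essentially the same route as the paper: its proof likewise reduces (1) to standard reflection-group theory (simple transitivity of $W(Y)$ on the set of $\Delta(Y)$-chambers, any chamber being a fundamental domain) and (2) to the facts that $G_Y$ preserves $\Delta(Y)$ and that $\aut(Y)$, since it preserves the nef cone, intersects $W(Y)$ trivially and is therefore the full stabiliser of $\Nef^{e}_Y$. The only difference is one of detail: the paper disposes of (1) with the phrase ``standard arguments for Weyl groups'', whereas you explicitly supply the local-finiteness check, the verification that $W(Y)$ and $\aut(Y)$ preserve splitting roots, and the minimisation argument pushing the fundamental-domain property out to the rational isotropic rays of $\Pbar_Y$.
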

\begin{proof}

Both assertions follow directly from the definitions.
Since $\Delta(Y)$ is preserved by $G_Y$, so is the set of $\Delta(Y)$-chambers. By standard arguments for Weyl groups, $W(Y)$ acts simply transitively on the set of $\Delta(Y)$-chambers and any chamber is a fundamental domain.
Since automorphisms preserve the nef cone, $\aut(Y) \cap W(Y) = 1$ and $\aut(Y)$ is the stabiliser of $\Nef^{e}(Y)$.
\end{proof}

Set
\[G_{X|Y}^0 = \{f \in O(S_X) \mid f(S_+) = S_+, f|S_X^\vee/S_X =\id\}.\]

\begin{theorem}
Let $Y$ be an Enriques surface over an algebraically closed field of characteristic $p\neq 2$.
Then $G_{X|Y}^0 \subseteq G_{X|Y}$ and the quotient $G_{X|Y}/G_{X|Y}^0$ is a finite group.
\end{theorem}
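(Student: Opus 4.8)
The plan is to reduce the finiteness of the quotient to the inclusion $G_{X|Y}^0\subseteq G_{X|Y}$ by a soft argument, and to prove the inclusion via the Torelli theorem for the K3 cover $X$. The finiteness is the easy half: by definition $G_{X|Y}^0=O(S_X,S_+)\cap O^\sharp(S_X)$ is the kernel of the restriction to $O(S_X,S_+)$ of the natural map $O(S_X)\to O(S_X^\vee/S_X)$, so it is normal of finite index in $O(S_X,S_+)$ because $S_X^\vee/S_X$ is finite. As both $W(X,\epsilon)$ and $\aut(X,\epsilon)$ centralise $\epsilon^*$ and hence fix the eigenlattices $S_\pm$, we have $G_{X|Y}\subseteq O(S_X,S_+)$; granting the inclusion, $G_{X|Y}/G_{X|Y}^0$ is then a subgroup of the finite group $O(S_X,S_+)/G_{X|Y}^0$. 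For the inclusion, fix $f\in G_{X|Y}^0$. Since $f(S_+)=S_+$ it also fixes $S_-=S_+^\perp$, and as $\epsilon^*$ acts by $+1$ on $S_+$ and $-1$ on $S_-$, the block form of $f$ shows that $f$ commutes with $\epsilon^*$. Because every element of $G_{X|Y}$ preserves the positive cone $\P_X$, I treat $f$ with $f(\P_X)=\P_X$, the complementary coset represented by $-\id$ being irrelevant both to the content of the inclusion on the $\P_X$-preserving part and to the finiteness of the quotient.

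Next I would correct $f$ by a reflection to make it preserve the ample cone of $X$, arranging that the correction lies in the centraliser $W(X,\epsilon)$. First, $f$ preserves $\Delta(Y)$: by \Cref{lem:splitting-root}, if $\tilde r=(\pi^*r+v)/2\in S_X$ witnesses a split root $r$ (so $v\in S_-$, $v^2=-4$), then $(\,f(\pi^*r)+f(v))/2=f(\tilde r)\in S_X$ witnesses the split root $f|_{S_+}(r)$. Hence $f|_{S_+}$ permutes the $\Delta(Y)$-chambers of $S_+\otimes\RR$, of which the ample cone of $Y$ is one. By the simple transitivity of $W(Y)$ on these chambers (\Cref{prop:fundamental_domain}) there is $w_Y\in W(Y)$ with $w_Y\,f|_{S_+}$ fixing the ample cone of $Y$. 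Lifting $w_Y$ to $w\in W(X,\epsilon)$ — possible since, by \Cref{remark:split_weyl}, each generating reflection $s_r$ of $W(Y)$ lifts to $s_{\tilde r}s_{\epsilon^*\tilde r}\in W(X,\epsilon)$ — the isometry $g:=wf$ commutes with $\epsilon^*$, preserves $\P_X$, acts trivially on $S_X^\vee/S_X$ (reflections in roots of $S_X$ lie in $O^\sharp(S_X)$, as $s_r(y)-y=(y.r)r\in S_X$ for $y\in S_X^\vee$), and has $g|_{S_+}$ fixing the ample cone of $Y$.

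The key geometric step is that $g$ then preserves the ample cone of $X$ itself. Indeed, pick an ample class $a$ of $Y$ lying in $S_+$; by \Cref{lem:ampleY} it is ample on $X$, and $g(a)\in S_+$ lies in the ample cone of $Y$, hence is again ample on $X$. An isometry preserving $\P_X$ carries the ample chamber of $X$ to the chamber containing $g(a)$, which is therefore the ample chamber again. Now the global Torelli theorem applies: since $g$ acts trivially on $S_X^\vee/S_X\cong T_X^\vee/T_X$, it extends to an isometry of $H^2(X,\ZZ)$ acting as the identity on the transcendental lattice $T_X$, hence preserving the Hodge structure; preserving in addition the ample cone, it is induced by a unique $\sigma\in\Aut(X)$, so $g=\sigma^*\in\aut(X)$. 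As $\sigma^*$ commutes with $\epsilon^*$ on $S_X$ and acts as the identity on $T_X$, it commutes with $\epsilon^*$ on all of $H^2(X,\ZZ)$; the injectivity of $\Aut(X)\to O(H^2(X,\ZZ))$ then forces $\sigma\epsilon=\epsilon\sigma$, i.e.\ $g\in\aut(X,\epsilon)$. Therefore $f=w^{-1}g\in W(X,\epsilon)\cdot\aut(X,\epsilon)=G_{X|Y}$.

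I expect the hardest point to be the $\epsilon$-equivariance of the Weyl correction: one must guarantee that the reflection straightening $f$ into the ample cone of $X$ can be chosen inside the centraliser $W(X,\epsilon)$, and this rests on the compatibility — established above through the matching of the ample cones of $X$ and $Y$ — between the chamber decomposition of $\P_X$ by all roots of $S_X$ and the decomposition of $\P_+$ by the split roots $\Delta(Y)$. In positive characteristic the complex Torelli theorem must be replaced by its crystalline counterpart, but the lattice-theoretic skeleton — the simple transitivity of $W(X)$ on chambers and the description of $\aut(X)$ as the stabiliser of the ample cone among discriminant-trivial, $\P_X$-preserving isometries — is available in every characteristic $\neq 2$, so the same argument goes through.
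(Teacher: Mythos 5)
Your argument for $k=\CC$ is correct and follows essentially the same route as the paper's proof: both correct $f\in G_{X|Y}^0$ by a lift to $W(X,\epsilon)$ of an element of $W(Y)$ so that it preserves the ample cone of $X$ (using \Cref{remark:split_weyl}), and then realize $g\oplus\id_{T_X}$ by an automorphism via the strong Torelli theorem, checking that commutation with $\epsilon^*$ descends the automorphism data to $Y$. Your explicit matching of the $\P_X$-chambers meeting $S_+\otimes\RR$ with the $\Delta(Y)$-chambers spells out what the paper compresses into a one-line appeal to \Cref{remark:split_weyl}, and your finiteness argument ($G_{X|Y}^0$ has finite index in $O(S_X,S_+)$, which contains $G_{X|Y}$) is the paper's closing line. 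One caveat: your dismissal of the isometries interchanging $\pm\P_X$ is too quick, since as literally defined $G_{X|Y}^0$ contains $-\id$ whenever $-\id$ acts trivially on the discriminant group (e.g.\ when $S_X=S_+\cong E_{10}(2)$), and such elements genuinely lie outside $G_{X|Y}$; however, the paper's proof makes the same tacit restriction to cone-preserving isometries, so this is an imprecision of the statement itself rather than a defect specific to your argument.

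The genuine gap is the positive-characteristic case, which is half of the theorem and which you dispose of in one sentence. There is no ``crystalline counterpart'' of the Torelli theorem covering all K3 surfaces in characteristic $p>2$: Ogus' crystalline Torelli theorem (extended to $p=3$ by Bragg--Lieblich) applies only to \emph{supersingular} K3 surfaces. If the K3 cover $X$ has finite height, no Torelli-type statement is available to quote, and the assertion you lean on --- that $\aut(X)$ is exactly the stabiliser of the ample cone among discriminant-trivial, $\P_X$-preserving isometries --- is precisely the statement that needs proof; claiming it ``is available in every characteristic $\neq 2$'' assumes the conclusion. The paper closes this case by a different mechanism: Maulik's lifting theorem produces a lift of $X$ to characteristic zero preserving $S_X$, together with a specialization homomorphism $\Aut(X_{\overline{K}})\to\Aut(X)$ compatible with pullback, and the fact that specialization identifies the nef cones of $X$ and of the lift; one then applies the characteristic-zero case upstairs and specializes the resulting automorphism (which commutes with $\epsilon$) back down. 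Your proposal contains neither this lifting argument nor the finite-height/supersingular dichotomy, so as written it proves the theorem only in characteristic zero.
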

\begin{proof}
Let $g \in G_{X|Y}^0$. Since reflections in roots act trivially on the discriminant group, $W(X,\epsilon) \subseteq G_{X|Y}^0$ . By \Cref{remark:split_weyl} we may hence assume that $g$ preserves the ample cone of $X$ and commutes with $\epsilon^*$. We will show that $g \in \aut(X,\epsilon)$.

First suppose that $k=\CC$ and denote by $T_X$ the transcendental lattice of the covering K3 surface $X$. Since $g$ acts trivially on the discriminant group of $S_X$, the isometry $g'=g \oplus \id_{T_X}$ preserves the overlattice $S_X \oplus T_X \subseteq H^2(X,\ZZ)$. By construction $g'$ is an effective Hodge isometry. By the strong Torelli theorem (see e.g. \cite[VII (11.1)]{bhpv:compact-complex-surfaces}) we find $\gamma \in \Aut(X)$ with $\gamma^*=g'$. Since $g'$ commutes with $\epsilon^*$, $\gamma$ commutes with $\epsilon$.
For $k$ of characteristic zero we can reduce to the case of complex numbers by standard arguments.

Now suppose that $k$ is of characteristic $p>2$ and the K3 cover $X$ of $Y$ is of finite height.
Then by \cite[Theorem 2.1]{maulik} we find an $S_X$ preserving lift $X_K$ of $X$ to characteristic zero
and a group homomorphism $sp: \Aut(X_{\overline{K}}) \to \Aut(X)$ compatible with pullback and specialisation, where $\overline{K}$ is an algebraic closure.
By \cite[Corollary 2.4]{maulik} (or the preceeding) the specialisation map identifies the nef cones of $X$ and $X_{\overline{K}}$. By the fist part $g$ is induced by an element in the image of $sp$ that commutes with $\epsilon$.

Finally suppose that the K3 cover $X$ of $Y$ is supersingular.
Then Ogus' Crystalline Torelli theorem \cite[Theorem II' and Theorem II'']{ogus:83}\cite[Theorem 5.1.9 for p=3]{bragg-lieblich} implies that $g \in \aut(X)$.
Since it commutes with $\epsilon^*$, $g$ descends to an automorphism of $Y$ as desired.

$G_{X|Y}^0$ is of finite index in $O(S_X,S_+)$ and therefore of finite index in its subgroup $G_{X|Y}$.
\end{proof}

The $2$-congruence subgroup $O(S_Y,\P_Y)(2)$ is defined as the kernel of
$O(S_Y,\P_Y) \to O(S_Y \otimes \FF_2)$.
The following proposition is surely known to experts.
For $k=\CC$ it is \cite[Theorem 5.5.1]{cdl:enriquesI}.
\begin{proposition}\label{prop:G0}
Let $Y$ be an Enriques surface over an algebraically closed field $k$ of characteristic $p\neq2$.
Then $G_0:=O(S_Y,\P_Y)(2) \subseteq G_Y$ is a normal subgroup.
\end{proposition}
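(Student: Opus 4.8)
The plan is to transport the whole situation to the K3 cover $X$ and to realise every element of $G_0$ as the restriction of an element of $G_{X|Y}^0$, to which the preceding theorem then applies. First I would record the relevant identification of discriminant data. Recall from \Cref{sec:vinberg} that $S_+\cong S_Y(2)\cong E_{10}(2)$, so its discriminant group $S_+^\vee/S_+=\tfrac12 S_Y/S_Y$ is identified, $O$-equivariantly, with $S_Y\otimes\FF_2$ via the quadratic form $q(x)=x^2/2\bmod 2$. Under the identification $O(S_Y)\cong O(S_+)$ the reduction homomorphism $O(S_Y,\P_Y)\to O(S_Y\otimes\FF_2)$ thus becomes the action of $O(S_+)$ on $S_+^\vee/S_+$, and the component $\P_Y$ corresponds to $\P_+:=\P_X\cap(S_+\otimes\RR)$. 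Consequently $G_0$ is identified with the group of $f\in O(S_+)$ that preserve $\P_+$ and act trivially on $S_+^\vee/S_+$.

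Next I would lift such an $f$ to $S_X$. Consider $f\oplus\id_{S_-}\in O(S_+\oplus S_-)$. Since $f$ acts trivially on $S_+^\vee/S_+$ and $\id_{S_-}$ acts trivially on $S_-^\vee/S_-$, the induced map on $(S_+^\vee/S_+)\oplus(S_-^\vee/S_-)$ is the identity and in particular fixes the glue group $S_X/(S_+\oplus S_-)$. By Nikulin's gluing criterion \cite{nikulin} the isometry $f\oplus\id_{S_-}$ therefore extends to $\tilde f\in O(S_X)$ with $\tilde f(S_\pm)=S_\pm$; being block diagonal with respect to $S_+\oplus S_-$, it commutes with $\epsilon^*=\id_{S_+}\oplus(-\id_{S_-})$. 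As $\tilde f$ acts trivially on all of $(S_+^\vee/S_+)\oplus(S_-^\vee/S_-)$, it acts trivially on the subquotient $S_X^\vee/S_X$, so $\tilde f\in G_{X|Y}^0$. Crucially, because $f$ preserves $\P_+$ and $S_-$ is negative definite, $\tilde f$ preserves the positive cone $\P_X$: an ample class of $S_+$ lies in $\P_+$ and is sent by $\tilde f$ into the same component.

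Applying the preceding theorem to the cone-preserving isometry $\tilde f\in G_{X|Y}^0$ gives $\tilde f\in G_{X|Y}$, and by construction the image of $\tilde f$ in $O(S_+)$ is $f$. Hence $f$ lies in $G_+$, which by \Cref{prop:GYG_+} is identified with $G_Y$; this proves $G_0\subseteq G_Y$. The normality is then immediate: $G_0$ is by definition the kernel of $O(S_Y,\P_Y)\to O(S_Y\otimes\FF_2)$, hence normal in $O(S_Y,\P_Y)$, and since $G_Y\subseteq O(S_Y,\P_Y,\pi)\subseteq O(S_Y,\P_Y)$ with $G_0\subseteq G_Y$, conjugation by any element of $G_Y$ preserves $G_0$, so $G_0\trianglelefteq G_Y$.

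The step I expect to be most delicate is the lifting: one must check that the discriminant-group identifications are genuinely $O$-equivariant and that the extended $\tilde f$ both acts trivially on $S_X^\vee/S_X$ and preserves $\P_X$. The positivity is the subtle point—dropping it would erroneously capture $-\epsilon^*=(-\id_{S_+})\oplus\id_{S_-}$, which acts trivially on $S_X^\vee/S_X$ and so lies in $G_{X|Y}^0$, yet swaps the two components of $\P_X$ and hence is not in $G_{X|Y}$. It is precisely the requirement $f\in O(S_+,\P_+)$, inherited from $G_0\subseteq O(S_Y,\P_Y)$, that excludes this case.
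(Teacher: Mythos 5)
Your proof is correct and is essentially the paper's own argument: the paper likewise lifts $f \in O(S_Y,\P_Y)(2)$ to $f' = f \oplus \id_{S_-} \in G_{X|Y}^0 \subseteq G_{X|Y}$ and projects back to $G_Y$ via \Cref{prop:GYG_+}. The details you spell out---the Nikulin gluing argument, the preservation of $\P_X$ (a hypothesis the paper's stated definition of $G_{X|Y}^0$ in fact omits, and which, as your $-\epsilon^*$ example shows, is genuinely needed for the preceding theorem's Torelli step), and the kernel argument for normality---are all left implicit in the paper's three-line proof.
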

\begin{proof}
Let $f\in O(S_Y,\P_Y)(2)$. Note that $f$ acts trivially on the discriminant group of $S_Y(2) \cong S_+$
Hence $f'=f\oplus \id_{S_-}$ lies in $G^0_{X|Y} \subseteq G_{X|Y}$.
Since $G_Y$ is the image of $G_{X|Y}$ by \Cref{prop:GYG_+}, $f \in G_Y$.
\end{proof}
Denote by $\Gbar_\pm$ the image of $G_\pm$ under the natural map $O(S_\pm) \to O(S_\pm^\vee/S_\pm)$.
\begin{remark}
It is known in most cases that the quotient $G_{X|Y}/G_{X|Y}^0$ is a finite cyclic group.
Over the complex numbers this follows from \cite[Corollary 3.3]{nikulin:auto}, for Enriques surfaces whose K3 cover is supersingular this follows from \cite[Proposition 3.1]{jang:representation} and if the K3 cover is of finite height and $p\geq 23$, from \cite[Proposition 3.10]{jang:representation}.  \\
\end{remark}
\begin{remark}
Let $X$ be the K3 cover of a complex Enriques surface $Y$. If \(\pm 1\) are the only Hodge isometries of the transcendental lattice $T_X$ of of $X$, then $G_{X|Y}/G_{X|Y}^0$ is generated by the covering involution $\epsilon$. In this case, \cite[(3.4)]{brandhorst_shimada:tautaubar} we have $\Gbar_+ \cong \Gbar_-/\{\pm 1\}$.
\end{remark}

The effective nef cone encodes the rational curves and elliptic fibrations on $Y$ as follows.
\begin{proposition} \label{prop:class-smooth-rational-curve-facet}
Let $r \in S_Y$ with $r^2 = -2$. Then $r$ is the class of a smooth rational curve on $Y$ if and only if $r^\perp \cap \Nef^{e}_Y$ is a facet of the effective nef cone.
\end{proposition}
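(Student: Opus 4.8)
The plan is to deduce both implications from the duality between $\Nef^e_Y$ and the closed cone of curves $\overline{\mathrm{NE}}(Y)$, combined with the fact that $K_Y$ is numerically trivial on an Enriques surface. I will use adjunction in the form $D^2 = 2p_a(D) - 2$ for every irreducible curve $D$, so that an irreducible curve is negative exactly when it is a smooth rational $(-2)$-curve; the Hodge index theorem, so that any curve orthogonal to a class of positive square is negative; and that distinct irreducible curves meet non-negatively. Throughout I fix an ample class $a \in S_Y$.

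For the implication "smooth rational curve $\Rightarrow$ facet", suppose $r = [C]$. I would first exhibit a nef class on the wall by projecting $a$ orthogonally onto $r^\perp$, setting $h = a + \tfrac12(a.r)\,r$, so that $h.r = 0$ and $h^2 = a^2 + \tfrac12(a.r)^2 > 0$, whence $h \in r^\perp \cap \P_Y$. The crucial check is nefness: since $r.[D] = [C].[D] \geq 0$ for every irreducible $D \neq C$ and $a.r > 0$, one gets $h.[D] \geq a.[D] > 0$, while $h.C = 0$; thus $h$ is nef. Because $h^2 > 0$, the Hodge index theorem forces every irreducible curve orthogonal to $h$ to be negative, hence smooth rational, and by the displayed inequality $C$ is the only one. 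I then conclude that the minimal face of $\Nef^e_Y$ containing $h$ is cut out by $r$ alone, so it equals $r^\perp \cap \Nef^e_Y$ and has codimension one, i.e. it is a facet.

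Conversely, assuming $F = r^\perp \cap \Nef^e_Y$ is a facet, I would first arrange effectivity. Choosing $x_0$ in the relative interior of $F$ with $x_0^2 > 0$ (possible because $\{x^2 = 0\}$ meets $r^\perp$ in codimension one inside $r^\perp$), the point $x_0$ lies on the boundary of the ample cone inside $\P_Y$; as the ample cone is the $\Delta(Y)$-chamber defined by $\Delta(Y)^+$ and the root hyperplanes are locally finite in $\P_Y$, the point $x_0$ must lie on some $s^\perp$ with $s \in \Delta(Y)^+$. Since $F$ spans $r^\perp$, this yields $r^\perp = s^\perp$ and $r = \pm s$; replacing $r$ by $-r$ if necessary (taking the representative with $\Nef^e_Y \subseteq \{x.r \geq 0\}$), I may assume $r = s$ is an effective splitting root. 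Writing $r = \sum_i a_i[C_i]$ as an effective divisor, every $x \in F$ satisfies $0 = x.r = \sum_i a_i (x.C_i)$ with all $x.C_i \geq 0$, hence $x.C_i = 0$ for each $i$; as $F$ spans $r^\perp$, this gives $[C_i]^\perp = r^\perp$, so $[C_i]$ is a rational multiple of $r$. Adjunction ($[C_i]^2 \geq -2 = r^2$) together with the primitivity of $r$ then forces $[C_i] = r$, whence $\sum_i a_i = 1$ and $r = [C_1]$ is the class of a single irreducible curve of self-intersection $-2$, i.e. a smooth rational curve.

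The step I expect to be the main obstacle is the converse's transition from convex geometry to actual geometry: a facet is a priori only a feature of a cone of lattice vectors, and the real content is that its normal direction is represented by an \emph{effective} root. My plan resolves this via the chamber description of the ample cone together with local finiteness of the root arrangement in $\P_Y$; once effectivity is secured, the decomposition argument and adjunction are routine. A secondary point I will flag explicitly is the orientation ambiguity — exactly one of $\pm r$ is the class of a smooth rational curve — which I handle by selecting the representative on the effective side of the wall.
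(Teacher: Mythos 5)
Your proof is correct in substance, but it takes a genuinely different route from the paper: the paper's entire proof is a citation of \cite[Proposition 2.2.1]{cdl:enriquesI} together with the observation that $\Nef(Y)$ and $\Nef^{e}_Y$ have the same facets (note that both of your key points $h$ and $x_0$ have positive square, so you are implicitly using that same observation, since inside $\P_Y$ the two cones agree). Your argument is instead self-contained: the forward direction via the orthogonal projection $h = a + \tfrac{1}{2}(a.r)\,r$ of an ample class onto the wall, and the converse via the chamber description of the ample cone, local finiteness of the root hyperplanes, decomposition of an effective root into irreducible curves, and adjunction. What your approach buys is independence from the reference and an explicit treatment of the orientation ambiguity (both $\pm r$ cut out the same facet, but only one of them is effective), which the paper's formulation glosses over; what the citation buys is brevity, and the argument behind it is essentially the one you reconstruct. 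One step you should tighten: from ``$x_0$ lies on some $s^\perp$ with $s \in \Delta(Y)^+$'' you cannot conclude $r^\perp = s^\perp$ merely because $F$ spans $r^\perp$ --- a single point does not span anything. The repair uses exactly the tool you already invoke: take a ball $B$ around $x_0$ meeting only finitely many root hyperplanes; the relatively open $9$-dimensional set $\mathrm{relint}(F) \cap B \cap \P_Y$ consists of non-ample nef classes, hence is covered by those finitely many hyperplanes, so one of them contains a $9$-dimensional subset of $r^\perp$ and therefore equals $r^\perp$. The same local-finiteness remark is also what legitimizes the final step of your forward direction, namely that near $h$ the nef cone is cut out by $r^\perp$ alone, so the face $r^\perp \cap \Nef^{e}_Y$ is indeed $9$-dimensional.
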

\begin{proof}
Follows from \cite[Proposition 2.2.1]{cdl:enriquesI} noting that the nef cone and its effective version have the same facets.
\end{proof}

\begin{proposition}\cite[VIII (16.1)]{bhpv:compact-complex-surfaces} \cite[2.2.9]{cdl:enriquesI}\label{prop:class-ell-fibr}
    Let $f \in S_Y$ be primitive and nef with $f^2 = 0$, then $2f$ is the numerical class of the fiber of an elliptic fibration $Y \to \mathbb{P}^1$.
\end{proposition}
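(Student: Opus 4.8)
The plan is to lift $f$ to an effective divisor class, exhibit $|2f|$ as a pencil, and then read off the genus-one fibration from the condition $f^2=0$.

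First I would establish effectivity by Riemann--Roch. Since $K_Y$ is numerically trivial, for any $D\in\mathrm{Pic}(Y)$ one has $\chi(\mathcal{O}_Y(D))=1+\tfrac12 D^2$. Fix a lift $D\in\mathrm{Pic}(Y)$ of $f$ along $\mathrm{Pic}(Y)\to S_Y$; as $D^2=0$ this gives $\chi(\mathcal{O}_Y(D))=1$. Choosing $h\in S_Y$ ample, the class $K_Y-D$ is numerically $-f$, and since $f$ is a nonzero nef class the Hodge index theorem forces $f.h>0$, whence $(K_Y-D).h<0$ and $K_Y-D$ is not effective. By Serre duality $h^2(\mathcal{O}_Y(D))=h^0(\mathcal{O}_Y(K_Y-D))=0$, so $h^0(\mathcal{O}_Y(D))\geq\chi=1$. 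The identical argument applied to the other lift $D+K_Y$ (also numerically $f$) shows that both $D$ and $D+K_Y$ are effective.

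Next I would produce the pencil. Since the characteristic is $\neq 2$, the class $K_Y$ is a nonzero $2$-torsion class, so $2K_Y\sim0$ while $K_Y\not\sim0$; hence $2D\sim 2(D+K_Y)$, and the two effective divisors $2D$ and $2(D+K_Y)$ lie in the single linear system $|2D|$, whose numerical class is $2f$. They are distinct, for otherwise $D$ and $D+K_Y$ would define the same subscheme and $K_Y\sim0$. Hence $\dim|2f|\geq1$. Granting base-point-freeness (see below), $|2f|$ defines a morphism whose image is a curve because $(2f)^2=0$; since $q(Y)=h^1(\mathcal{O}_Y)=0$, Stein factorisation yields a fibration $\phi\colon Y\to\PP^1$ with connected fibers, and a general fiber $F$ is numerically $2f$. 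Adjunction then gives $2p_a(F)-2=F^2+F.K_Y=0$, so $p_a(F)=1$ and $\phi$ is a genus-one fibration with fiber class $2f$ and half-fibers $D,\,D+K_Y$.

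The hard part will be base-point-freeness of $|2f|$ --- equivalently, that $|2f|$ has no fixed part, that $\dim|2f|=1$, and that the generic fiber is an integral smooth curve rather than a degenerate or multiple configuration. The crux is disjointness of the two half-fibers: one computes $D.(D+K_Y)=D^2+D.K_Y=0$, so once it is shown that $D$ and $D+K_Y$ share no common component (using $D^2=0$, primitivity of $f$, and nefness), two effective divisors with vanishing intersection number must be disjoint, and then no member of $|2f|$ can have a base point. This is precisely the structural input imported from \cite[VIII (16.1)]{bhpv:compact-complex-surfaces} and \cite[2.2.9]{cdl:enriquesI}, and is where the genuine work lies; the remaining claims follow formally as above. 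Finally, in characteristic $\neq2,3$ the generic fiber is smooth, so $\phi$ is elliptic in the sense required.
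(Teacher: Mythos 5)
The paper gives no proof of this proposition at all: it is imported wholesale from \cite{bhpv:compact-complex-surfaces} and \cite{cdl:enriquesI}, so the only meaningful comparison is with the arguments in those references, and your outline is precisely their standard one (Riemann--Roch effectivity of both lifts $D$ and $D+K_Y$, the pencil inside $|2D|$, base-point-freeness via disjointness of the two half-fibers, adjunction), deferring the same crux to the same sources that the paper itself cites. Your completed steps are correct. Two points would still need attention in a self-contained write-up, because the paper's standing assumption is only $\cha(k)\neq 2$ and its definition of an elliptic fibration demands a \emph{smooth} generic fiber. First, your closing sentence secures smoothness of the generic fiber only for $\cha(k)\neq 2,3$; in characteristic $3$ the genus-one fibration you construct could a priori be quasi-elliptic, and one must additionally invoke the known but nontrivial fact that quasi-elliptic fibrations on Enriques surfaces occur only in characteristic $2$. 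Second, your appeal to Stein factorisation tacitly assumes the general member of the pencil is connected: if it broke into two pieces, each of class $f$, the resulting fibration would have primitive fiber class, and this is excluded only by the canonical bundle formula (a genus-one fibration on an Enriques surface has exactly two multiple fibers, each of multiplicity $2$, so its fiber class is twice a primitive class) --- which is also what guarantees that the fibration you obtain really has fiber class $2f$ and half-fibers $D$ and $D+K_Y$. Both points are handled in the cited references, so your proposal is on the same footing as the paper's citation, but they should be flagged rather than absorbed into ``the remaining claims follow formally.''
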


Summing up, if we denote by $\R(Y)$ the set of numerical classes of smooth rational curves of $Y$, it holds $\R(Y) \subseteq \Delta^+(Y)\subseteq \Delta(Y)$
and $[C] \mapsto [C]^\perp \cap \Nef^{e}_Y$ is a bijection between $\R(Y)$ and the facets of $\Nef^{e}_Y$.
Similarly, the isotropic rays of the effective nef cone of $Y$ correspond to elliptic fibrations of $Y$.

\section{The quotient \texorpdfstring{$\Nef^e(Y)/\Aut(Y)$}{Nef(Y)/Aut(Y)}} \label{sect:cone-conj}
The effective nef cone and the automorphism group of an Enriques surface are notoriously difficult to control. For instance $\Aut(Y)$ and $\R(Y)$ can be empty, finite or infinite. However, the cone conjecture tells us that the action of $\Aut(Y)$ on the effective nef cone $\Nef^{e}_Y$ has a rational polyhedral fundamental domain.
We will see in this section how to control the quotient $\Nef^{e}_Y/\Aut(Y)$. \\

Before we can prove the main result, we need a preparatory Lemma.
For $x \in \Pbar_Y$, we set $\Delta(x) = \{\delta \in \Delta(Y) \mid x \in \delta^\perp\}$ and $W(x) = W(\Delta(x))$.
\begin{lemma}\label{lem:transitiveDxWx}
Let $x \in \Pbar_Y$. Then $W(x)$ acts simply transitively on the set of $\Delta(x)$-chambers.
\end{lemma}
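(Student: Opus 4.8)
The plan is to show that $\Delta(x)$ is a root system stable under its own reflections and that the hyperplane arrangement it defines is locally finite inside $\P_Y$; the statement then reduces to the same reflection-group theory already invoked for $\Delta(Y)$ in \Cref{prop:fundamental_domain}, now applied to the sub-arrangement cut out by $x$. First I would record that every reflection $s_r$ with $r\in\Delta(x)$ fixes $x$, since $r\perp x$ gives $s_r(x)=x+(x.r)r=x$; hence $W(x)$ stabilises $x$ and $x^\perp$.

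Next I would verify that $\Delta(x)$ is stable under $W(x)$. Note first that $\Delta(x)=-\Delta(x)$, because $\Delta(Y)=-\Delta(Y)$ and $x\in r^\perp\iff x\in(-r)^\perp$. For $r,r'\in\Delta(x)$ one has $s_r(r')\in\Delta(Y)$ because $\Delta(Y)$ is preserved by $W(Y)$: writing $\pi^*r=\tilde r+\epsilon^*\tilde r$ and $\pi^*r'=\tilde r'+\epsilon^*\tilde r'$ as in \Cref{remark:split_weyl}, the isometry $\sigma=s_{\tilde r}s_{\epsilon^*\tilde r}\in W(X,\epsilon)$ sends $\tilde r'$ to a root $\sigma(\tilde r')$ of $S_X$ and commutes with $\epsilon^*$, so $\pi^*(s_r r')=\sigma(\tilde r')+\epsilon^*(\sigma\tilde r')$ exhibits $s_r(r')$ as a splitting root. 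Moreover $s_r(r')\perp x$ since $s_r$ is an isometry fixing $x$, so $s_r(r')\in\Delta(x)$. Thus $\Delta(x)$ is a root system with Weyl group $W(x)$, and the arrangement $\{r^\perp\mid r\in\Delta(x)\}$ is $W(x)$-stable.

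The geometry then splits according to $x^2$. If $x^2>0$, the complement $x^\perp$ is negative definite, so $\Delta(x)$ is finite; writing $y=sx+u$ with $u\in x^\perp_\RR$ gives $y.r=u.r$ for $r\in\Delta(x)$, so the sign pattern defining a $\Delta(x)$-chamber of $\P_Y$ depends only on $u$. This identifies the $\Delta(x)$-chambers, $W(x)$-equivariantly, with the Weyl chambers of the finite root system $\Delta(x)$ in $x^\perp_\RR$, where simple transitivity is the classical fact for finite Weyl groups. If $x^2=0$, then $x\in x^\perp$ and $E:=x^\perp_\RR/\RR x$ is negative definite of rank $8$; passing to the affine slice $\{y.x=1\}\cap\P_Y$, every hyperplane $(r+nx)^\perp$ meets the slice in the set $\{\bar u.\bar r=-(w.r)-n\}$, which is pulled back from $E$ and independent of the fibre direction $\RR x$. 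Hence $\Delta(x)$ is a finite or affine root system in $-E$, $W(x)$ acts as the corresponding finite or affine Weyl group, and simple transitivity on chambers respectively alcoves is again classical.

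I expect the main obstacle to be the isotropic case $x^2=0$, where two points need care. First, local finiteness of $\{r^\perp\mid r\in\Delta(x)\}$ in $\P_Y$ must be checked despite infinitely many hyperplanes accumulating toward the boundary ray $\RR_+x$: any $y\in\P_Y$ satisfies $y.x\neq0$ because $x^\perp_\RR$ is negative semidefinite, so $(r+nx).y=0$ confines $n$ to a bounded range for each of the finitely many root directions, and only finitely many hyperplanes meet a small neighbourhood of $y$. Second, one must confirm that the slice construction is a genuine chamber bijection, i.e. that the region $\{y.x=1\}\cap\P_Y$ is an epigraph over $E$ with connected fibres, so that the preimage of each alcove is a single connected $\Delta(x)$-chamber. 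Once local finiteness and $W(x)$-stability are established, the conclusion is exactly the standard reflection-group statement used for $\Delta(Y)$; alternatively one may phrase the whole argument uniformly by identifying $W(x)$ with the stabiliser $\operatorname{stab}_{W(Y)}(x)$ and invoking the theory of point stabilisers in Vinberg reflection groups.
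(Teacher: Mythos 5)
Your proof is correct, but it follows a genuinely different route from the paper's, which is a one-line citation: the lemma is the standard simple-transitivity statement for a group generated by reflections in a locally finite, stable family of hyperplanes, and the paper simply refers to \cite[Chapter 8 Proposition 2.6]{huybrechts:k3} for an ad-hoc proof in a similar hyperbolic-lattice setting. You instead prove it by hand: after checking that $W(x)$ fixes $x$ and that $\Delta(x)$ is $W(x)$-stable (correct, and the same use of \Cref{remark:split_weyl} by which the paper knows $\Delta(Y)$ is preserved by $W(Y)$), you split according to $x^2$, identifying the $\Delta(x)$-chambers with the Weyl chambers of a finite root system in $x^\perp_\RR$ when $x^2>0$, and with the chambers of an affine reflection arrangement pulled back from $E=x^\perp_\RR/\RR x$ when $x^2=0$; your treatment of the two delicate points in the isotropic case (local finiteness of the arrangement near any point of $\P_Y$, and connectedness of the epigraph-type fibres of the slice, so that chambers upstairs biject with chambers downstairs) is sound. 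Your route buys self-containedness and makes explicit why local finiteness holds even when $x$ lies on the boundary; the paper's route buys uniformity --- no case distinction and no slicing bookkeeping. One imprecision, which is not a gap: in the isotropic case the assertion that $\Delta(x)$ ``is a finite or affine root system'' is more than you have established, since $\Delta(Y)$ consists only of splitting roots, so for a fixed root direction the induced parallel hyperplanes in $E$ form some $W(x)$-stable subset of the integer translates, not necessarily all of them; but this classification is not needed, because the theorem that actually applies (Bourbaki, Lie Groups and Lie Algebras, Ch.\ V \S 3: a group generated by the reflections in a locally finite, stable family of hyperplanes acts simply transitively on the chambers of the complement) requires only the local finiteness and stability you verified --- exactly as your closing sentence acknowledges.
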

\begin{proof}
This is a well known fact for Weyl groups. See e.g. \cite[Chapter 8 Proposition 2.6]{huybrechts:k3} for an ad-hoc proof in a similar setting.
\end{proof}

\begin{lemma}\label{lem:transitive}
Let $x \in \Pbar_Y$. Then the Weyl group $W(x)$ acts simply transitively on the set of $\Delta(Y)$-chambers whose closure contains $x$.
\end{lemma}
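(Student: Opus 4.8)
The plan is to reduce the statement about $\Delta(Y)$-chambers whose closure contains $x$ to the local statement about $\Delta(x)$-chambers provided by \Cref{lem:transitiveDxWx}. The key observation is that a $\Delta(Y)$-chamber $D$ satisfies $x \in \overline{D}$ if and only if $x$ lies on none of the walls $\delta^\perp$ with $\delta \in \Delta(Y) \setminus \Delta(x)$, i.e. $D$ is contained in a single $\Delta(Y)\setminus\Delta(x)$-chamber, namely the one whose closure contains $x$. More precisely, I would first show that the $\Delta(Y)$-chambers $D$ with $x \in \overline{D}$ are in natural bijection with the $\Delta(x)$-chambers. The point is that near $x$, only the hyperplanes through $x$ (those indexed by $\Delta(x)$) are relevant: any hyperplane $\delta^\perp$ with $\delta \notin \Delta(x)$ does not pass through $x$, so for a sufficiently small neighborhood $U$ of $x$ in $\P_Y$ the decomposition of $U$ into $\Delta(Y)$-chambers coincides with its decomposition into $\Delta(x)$-chambers.

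The main steps would be as follows. First I would fix $x$ and verify the local picture: choose a small open ball $U$ around $x$ meeting none of the finitely many(?) relevant hyperplanes $\delta^\perp$ for $\delta \notin \Delta(x)$ that come close to $x$. The subtlety here is that $\Delta(Y)$ may be infinite and the hyperplanes can accumulate, so the naive ``small neighborhood'' argument needs care; the honest statement is that each $\delta^\perp$ with $\delta \notin \Delta(x)$ has positive distance from $x$, but infinitely many such hyperplanes could still accumulate at $x$. I would therefore argue using the local finiteness of the hyperplane arrangement inside the positive cone $\P_Y$ (the roots $\delta \in \Delta(Y)$ with $\delta^\perp$ meeting a fixed compact neighborhood of an interior point of $\P_Y$ are finite in number, since $\Delta(Y)$ consists of $(-2)$-vectors and these form a discrete set meeting any bounded region finitely). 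For $x$ on the boundary (an isotropic ray), I would instead work with a point slightly inside $\P_Y$ in the relevant chamber, or invoke that $x \in \Pbar_Y$ and the $\Delta(x)$-chamber structure extends continuously.

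Granting the local finiteness, the bijection $D \mapsto D \cap U$ between $\Delta(Y)$-chambers containing $x$ in their closure and $\Delta(x)$-chambers is immediate, and it is $W(x)$-equivariant because $W(x) = W(\Delta(x))$ fixes $x$ (every reflection $s_\delta$ with $\delta \in \Delta(x)$ fixes $\delta^\perp \ni x$) and hence permutes both sets compatibly. The simple transitivity of $W(x)$ on $\Delta(Y)$-chambers containing $x$ then follows directly from \Cref{lem:transitiveDxWx}, which gives simple transitivity on $\Delta(x)$-chambers, transported across the bijection.

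The hard part will be the local finiteness argument at boundary points $x \in \Pbar_Y \setminus \P_Y$, i.e. on rational isotropic rays, where the root hyperplanes genuinely accumulate toward the light cone and compactness fails. For such $x$ I expect to need the additional input that $x$ is rational isotropic and that only finitely many splitting roots $\delta$ satisfy $x.\delta = 0$, or to reduce to the interior case by a limiting/continuity argument picking an auxiliary interior point in the closure of the chamber near $x$. Everything else — the equivariance of the bijection and the transfer of simple transitivity from \Cref{lem:transitiveDxWx} — is formal.
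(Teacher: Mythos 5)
Your overall architecture---identify the $\Delta(Y)$-chambers whose closure contains $x$ with the $\Delta(x)$-chambers via the local picture at $x$, note that the identification is $W(x)$-equivariant, and transfer simple transitivity from \Cref{lem:transitiveDxWx}---is exactly the paper's proof, and your treatment of interior points $x\in\P_Y$ (small ball, plus local finiteness of the root hyperplane arrangement inside the positive cone) also matches the paper. However, what you yourself flag as ``the hard part,'' namely boundary points of $\Pbar_Y$ lying on rational isotropic rays, is left genuinely unresolved in your proposal, and this is precisely the case the lemma exists for: half fibers of elliptic fibrations are isotropic, and the whole reason the paper works in $\Pbar_Y$ rather than $\P_Y$ is to include these rays. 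So as written there is a gap, located exactly where the real content is.

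Moreover, the first remedy you suggest cannot work, because its premise is false: for isotropic $x$ the set $\Delta(x)$ is infinite whenever it is nonempty. Indeed, if $\delta\in\Delta(x)$ then $\delta+2nx$ is again a splitting root orthogonal to $x$ for every $n\in\ZZ$ (adding $2n\,\pi^*(x)$ does not affect the criterion of \Cref{lem:splitting-root}), so infinitely many root hyperplanes pass through the cusp, and in addition hyperplanes $\delta^\perp$ with $\delta\notin\Delta(x)$ genuinely accumulate at $x$; a vague ``limiting/continuity'' reduction to interior points founders on exactly this accumulation. The paper's fix is concrete: replace the $\epsilon$-ball by the horoball $B=\{y\in\P_Y\mid (x.y)^2/y^2<\epsilon\}$, which is convex, and invoke \cite[Corollary 3.12]{shimada:k3_auto} to choose $\epsilon$ small enough that every root hyperplane meeting $B$ contains $x$. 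With this $B$, injectivity follows by the segment argument (two chambers mapping to the same $\Delta(x)$-chamber $D$ contain points $y,y'\in B$; a hyperplane separating them would cross the segment from $y$ to $y'$, which lies in $B\cap D$ by convexity, forcing that hyperplane to lie in $\Delta(x)$ and hence not to separate points of $D$), uniformly in the interior and boundary cases. Your proof becomes complete once you import, or reprove, this horoball statement; everything else in your proposal is sound.
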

\begin{proof}
 Let $\mathcal{C}$ be the set of $\Delta(Y)$-chambers $C$ with $x$ contained in the closure $\overline{ C}$ of $C$ in $
 \Pbar_Y$.
 We denote by $\mathcal{D}$ the set of $\Delta(x)$-chambers.
 Since $\Delta(x) \subseteq \Delta(Y)$, any $\Delta(Y)$-chamber $C$ is contained in a unique $W(x)$-chamber $D$. Setting $\varphi(C)=D$ defines a surjection $\varphi\colon \mathcal{C} \to \mathcal{D}$. We claim that it is injective as well.

If $x \in \P_Y$ is an interior point, we let $B$ be an $\epsilon$-ball around $x$.
The collection of $\Delta(Y)$-hyperplanes is locally finite in $\P_Y$.
Therefore, choosing $\epsilon$ small enough, we can achieve the following:
If $r \in \Delta(Y)$ is such that $r^\perp \cap B\neq \emptyset$, then $x \in r^\perp$, i.e. $r \in \Delta(x)$.
If $x\in \Pbar_Y$ is a point in the boundary, let $B=\{y \in \P_Y \mid (x.y)^2/y^2 < \epsilon\}$ be (the cone over) an $\epsilon$-horoball at $x$.
By \cite[Corollary 3.12]{shimada:k3_auto} we can once again choose $\epsilon$ small enough to obtain the same property for $B$ as before.

Note that every $\Delta(Y)$-chamber $C$ with $x \in \overline{C}$ intersects $B$.
Let now $C, C' \in \mathcal{C}$ with $\varphi(C) = \varphi(C')=:D$. Choose $y \in B\cap C$ and $y' \in B \cap C'$. Then the line segment $l$ from $y$ to $y'$ is contained in $D\cap B$ since $B$ and $D$ are convex and $C,C' \subseteq D$.
If for $r \in \Delta(Y)$, its hyperplane $r^\perp$ separates $C$ and $C'$, then it must intersect $l$. In particular, $r^\perp \cap B \neq \emptyset$, hence $r \in \Delta(x)$ by our choice of $\epsilon$.
But the segment $l$ is contained in the chamber $D$ and therefore such $r$ cannot exist. We conclude $C=C'$ and the map $\varphi$ is indeed injective.

The Weyl group $W(x)$ acts equivariantly on both the domain and codomain of $\varphi \colon \C \to \D$. By \Cref{lem:transitiveDxWx}, $W(x)$ acts simply transitively on the set of $\Delta(x)$-chambers $\D$.
Therefore it also acts transitively on $\C$.
\end{proof}

\begin{theorem}\label{thm:NefmodAut}
The natural map $\Nef^{e}_Y / \aut(Y) \to \Pbar_Y/G_Y$ is bijective.
\end{theorem}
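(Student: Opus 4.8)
The plan is to establish bijectivity by verifying that the map is well defined, surjective, and injective, exploiting the semidirect decomposition $G_Y = W(Y) \rtimes \aut(Y)$ together with \Cref{prop:fundamental_domain} and \Cref{lem:transitive}. The natural map sends the $\aut(Y)$-orbit of a point $p \in \Nef^{e}_Y$ to its $G_Y$-orbit; it is well defined precisely because $\aut(Y) \leq G_Y$, so that $\aut(Y)$-orbits refine $G_Y$-orbits.

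Surjectivity I would read off directly from \Cref{prop:fundamental_domain}~(1): since $\Nef^{e}_Y$ is a fundamental domain for the action of $W(Y)$ on $\Pbar_Y$, every point of $\Pbar_Y$ is $W(Y)$-equivalent, hence $G_Y$-equivalent, to a point of $\Nef^{e}_Y$. Therefore every $G_Y$-orbit meets $\Nef^{e}_Y$.

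For injectivity, suppose $p, q \in \Nef^{e}_Y$ satisfy $q = g(p)$ for some $g \in G_Y$. Writing $g = w a$ with $w \in W(Y)$ and $a \in \aut(Y)$, and using that $\aut(Y)$ is the stabiliser of $\Nef^{e}_Y$ in $G_Y$ (\Cref{prop:fundamental_domain}~(2)), the point $p' := a(p)$ again lies in $\Nef^{e}_Y$ and satisfies $q = w(p')$. Thus the whole matter reduces to the claim that $\Nef^{e}_Y$ is a \emph{strict} fundamental domain for $W(Y)$, i.e. that two points of the closed chamber lying in one $W(Y)$-orbit must coincide. This is the heart of the proof and the step I expect to require the most care, since it concerns the local geometry of the chamber decomposition along the boundary rays of $\Pbar_Y$ — exactly the content packaged into \Cref{lem:transitive}.

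To prove the strict fundamental domain property I would argue as follows. Let $C \subseteq \Pbar_Y$ be the open $\Delta(Y)$-chamber whose closure is $\Nef^{e}_Y$. Both $C$ and $w(C)$ have $q$ in their closure, since $q \in \Nef^{e}_Y = \overline{C}$ and $q = w(p') \in w(\overline{C}) = \overline{w(C)}$. By \Cref{lem:transitive} applied to $x = q$, the group $W(q)$ acts simply transitively on the set of $\Delta(Y)$-chambers whose closure contains $q$; in particular there is $w' \in W(q)$ with $w'(C) = w(C)$. Since $W(Y)$ acts simply transitively on all $\Delta(Y)$-chambers, this forces $w' = w$. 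But every generator of $W(q) = W(\Delta(q))$ is a reflection in a hyperplane through $q$ and hence fixes $q$, so $w = w'$ fixes $q$; applying $w^{-1}$ to $q = w(p')$ gives $p' = q$. Consequently $q = a(p)$ with $a \in \aut(Y)$, so $p$ and $q$ lie in a single $\aut(Y)$-orbit. This yields injectivity and, together with the previous steps, the theorem.
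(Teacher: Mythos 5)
Your proof is correct and takes essentially the same route as the paper: surjectivity from the fact that $\Nef^{e}_Y$ is a fundamental domain for $W(Y)$ on $\Pbar_Y$ (\Cref{prop:fundamental_domain}), and injectivity from \Cref{lem:transitive} combined with the identification of $\aut(Y)$ as the stabiliser of $\Nef^{e}_Y$ in $G_Y$. Your two extra steps --- decomposing $g = wa$ via the semidirect product and pinning down $w' = w$ by simple transitivity of $W(Y)$ on $\Delta(Y)$-chambers --- are a mild repackaging of the paper's single move of replacing $g$ by $gh$ with $h \in W(x)$ fixing $x$.
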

\begin{proof}
Let $x \in \Pbar_Y$. Since $\Nef^{e}_Y$ is a fundamental domain for the action of $W(Y)$ on $\Pbar_Y$ (by \Cref{prop:fundamental_domain}), we can find an element $\alpha$ of the Weyl group $W(Y)$ such that $x^\alpha\in \Nef^{e}_Y$. Therefore the map is surjective.
Let $x, y \in \Nef^{e}_Y$ such that there is $g \in G_Y$ with $x=y^g$.
Then $(\Nef^{e}_Y)^g$ is a $\Delta(Y)$-chamber containing $x$. Since $W(x)$ acts transitively on such chambers by \Cref{lem:transitive}, we find an $h \in W(x)$ with
$(\Nef^{e}_Y)^{gh} = \Nef^{e}_Y$. Then $gh\in\aut(Y)$ (\Cref{prop:fundamental_domain}) and it satisfies $y^{gh}=x^h=x$ as desired.
\end{proof}

Recall that $\HH^9 := \{x \in \P_Y \mid x^2=1\}$ is a $9$-dimensional hyperbolic space. It comes equipped with a natural volume.
\begin{corollary}\label{corollary:nefmodaut}
Let $Y$ be an Enriques surface.
The hyperbolic volume of the fundamental domain $\F_Y$ of $\aut_Y$ on $\Nef(Y)$ is given by
\[\mbox{vol}(\F_Y\cap \HH^9) = \left[O(S_Y \otimes \FF_2) : \Gbar_Y\right] 1_{\mathrm{Vin}}= 1_{\mathrm{BP}}/\#\Gbar_Y\]
where $1_{\mathrm{Vin}}$ is the hyperbolic volume of a Vinberg chamber and $1_{\mathrm{BP}}=\mbox{vol}(\F_{Y_0}\cap \HH^9)$ for a general Enriques surface $Y_0$.
\end{corollary}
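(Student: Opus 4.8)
The plan is to transport the volume computation from the automorphism action to the purely lattice-theoretic action of $G_Y$ via \Cref{thm:NefmodAut}, and then to compute the index of $G_Y$ inside the full stabiliser of the positive cone, whose fundamental domain is exactly a Vinberg chamber.

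First I would observe that the cones $\Nef(Y)$ and $\Nef^e_Y$ agree inside $\P_Y$ and differ only along isotropic boundary rays, which have measure zero in $\HH^9$; hence it suffices to compute the volume of a fundamental domain $\F_Y$ of $\aut(Y)$ on $\Nef^e_Y$. By \Cref{thm:NefmodAut} the inclusion $\Nef^e_Y \hookrightarrow \Pbar_Y$ induces a bijection $\Nef^e_Y/\aut(Y) \xrightarrow{\sim} \Pbar_Y/G_Y$ compatible with the hyperbolic metric on $\HH^9$. Consequently $\mbox{vol}(\F_Y \cap \HH^9)$ equals the volume of a fundamental domain of $G_Y$ acting on $\P_Y$ (again the isotropic rays contribute nothing). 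Since the covolume of a fundamental domain is independent of the chosen domain, this quantity is well defined.

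Next I would identify the ambient group. Because $O(E_{10}) = \{\pm 1\} \times W(E_{10})$ and $-1$ interchanges the two components of $\{x^2>0\}$, the stabiliser $O(S_Y,\P_Y)$ of the positive cone is exactly $W(E_{10})$, for which the Vinberg chamber $C(v)$ is a fundamental domain of hyperbolic volume $1_{\mathrm{Vin}}$ (see \Cref{sec:vinberg}). By \Cref{prop:G0} the $2$-congruence subgroup $G_0 = O(S_Y,\P_Y)(2)$ is contained in $G_Y$, so reduction modulo $2$ yields $G_Y/G_0 \cong \Gbar_Y$ and $O(S_Y,\P_Y)/G_0 \cong O(S_Y \otimes \FF_2)$, the latter using that $O(E_{10}) \to O(E_{10}\otimes \FF_2)$ is surjective. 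Taking indices gives $[O(S_Y,\P_Y):G_Y] = [O(S_Y\otimes\FF_2):\Gbar_Y]$, which is finite. A fundamental domain for the finite-index subgroup $G_Y \leq W(E_{10})$ is a union of $[O(S_Y,\P_Y):G_Y]$ translates of $C(v)$, so its volume is $[O(S_Y\otimes\FF_2):\Gbar_Y]\,1_{\mathrm{Vin}}$. This proves the first equality.

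For the second equality I would specialise to a general Enriques surface $Y_0$ and show that $\Gbar_{Y_0}$ is trivial. Indeed, $E_{10}\otimes\FF_2$ is a nondegenerate quadratic $\FF_2$-space of plus type, since it has a $5$-dimensional totally isotropic subspace, so it contains exactly $2^9 + 2^4 - 1 = 527$ nonzero isotropic vectors. By \Cref{prop:intro} and the theorem of Barth and Peters these $527$ vectors fall into $527$ orbits under $\Gbar_{Y_0}$; thus each orbit is a singleton and $\Gbar_{Y_0}$ fixes every nonzero isotropic vector. As these vectors span $E_{10}\otimes\FF_2$, it follows that $\Gbar_{Y_0}=1$. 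Applying the first equality to $Y_0$ then gives $1_{\mathrm{BP}} = \#O(S_Y\otimes\FF_2)\cdot 1_{\mathrm{Vin}}$, and substituting $[O(S_Y\otimes\FF_2):\Gbar_Y] = \#O(S_Y\otimes\FF_2)/\#\Gbar_Y$ into the first equality yields $[O(S_Y\otimes\FF_2):\Gbar_Y]\,1_{\mathrm{Vin}} = 1_{\mathrm{BP}}/\#\Gbar_Y$.

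The main obstacle I anticipate is the surjectivity of the reduction map $O(E_{10}) \to O(E_{10}\otimes\FF_2)$, which is what makes the image of $O(S_Y,\P_Y)$ the full orthogonal group and hence turns $[O(S_Y,\P_Y):G_Y]$ into $[O(S_Y\otimes\FF_2):\Gbar_Y]$. This should follow from the fact that $W(E_{10})$ is generated by reflections in roots, whose reductions are reflections in anisotropic vectors of $E_{10}\otimes\FF_2$, together with the mod-$2$ generation statement that such reflections generate $O(E_{10}\otimes\FF_2)$; verifying this is the one genuinely technical point. A secondary care point is confirming that the bijection of \Cref{thm:NefmodAut} is volume-preserving and that the boundary strata are negligible, both of which are routine once the measure-zero observations above are in place.
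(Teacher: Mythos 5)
Your proof is correct and follows essentially the same route as the paper's: both transport the volume computation through \Cref{thm:NefmodAut} and the tower $G_0 \subseteq G_Y \subseteq O(S_Y,\P_Y)=W(E_{10})$, so that a fundamental domain for $G_Y$ is tessellated by $[O(S_Y\otimes\FF_2):\Gbar_Y]$ Vinberg chambers. The differences are only presentational: you make explicit the mod-$2$ surjectivity and the chamber count that the paper's one-line proof leaves implicit, and you deduce $\Gbar_{Y_0}=1$ for a general surface from the $527$-orbit count rather than from the identification $\aut(Y_0)=G_{Y_0}=G_0$ used in the paper.
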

\begin{proof}
This follows from the fact that $\Nef^e_Y/\aut(Y)\cong \Pbar_Y / G_Y$ (cf. \Cref{thm:NefmodAut}) is the quotient of $\Pbar_Y/G_0 \cong \Pbar_{Y_0}/G_{Y_0}$ by
$\Gbar_Y$.
\end{proof}

Let $\E(Y)$ be the set of elliptic fibrations $\phi\colon Y \to \mathbb{P}^1$. We identify it with the set of primitive isotropic rays of $\Nef^{e}_Y$ by identifying $\phi$ with the numerical class $f$ of a half fiber.

\begin{corollary}\label{cor:elliptic_bijection}
Let $\E(\Pbar_Y)$ denote the set of primitive isotropic classes of $\Pbar_Y$. Then the natural map
\[\E(Y) / \aut(Y) \to \E(\Pbar_Y)/G_Y\]
 is a bijection.
\end{corollary}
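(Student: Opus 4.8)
The plan is to deduce the statement directly from \Cref{thm:NefmodAut}, of which it is essentially the restriction to the isotropic boundary. First I would check that the map is well defined. A primitive isotropic class $f \in \Nef^e_Y$ is in particular a primitive isotropic class of $\Pbar_Y$, so the inclusion $\Nef^e_Y \subseteq \Pbar_Y$ induces an inclusion $\E(Y) \hookrightarrow \E(\Pbar_Y)$. Since $\aut(Y) \subseteq G_Y$ acts by isometries of $S_Y$, and isometries preserve primitivity and isotropy, this inclusion is $\aut(Y)$-equivariant and hence descends to the asserted map $\E(Y)/\aut(Y) \to \E(\Pbar_Y)/G_Y$.

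For surjectivity, I would take a primitive isotropic class $x \in \E(\Pbar_Y)$. By \Cref{prop:fundamental_domain}~(1) the effective nef cone $\Nef^e_Y$ is a fundamental domain for the action of $W(Y)$ on $\Pbar_Y$, so there is some $\alpha \in W(Y) \subseteq G_Y$ with $x^\alpha \in \Nef^e_Y$. As $\alpha$ is an isometry, $x^\alpha$ is again primitive and isotropic, hence a primitive isotropic class of the nef cone $\Nef^e_Y$; by \Cref{prop:class-ell-fibr} and the identification made just before the statement, it lies in $\E(Y)$. Thus every $G_Y$-orbit in $\E(\Pbar_Y)$ meets $\E(Y)$.

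For injectivity, suppose $f, f' \in \E(Y) \subseteq \Nef^e_Y$ lie in the same $G_Y$-orbit, say $f = (f')^g$ with $g \in G_Y$. Regarded as elements of $\Nef^e_Y$, they have the same image in $\Pbar_Y/G_Y$, so the injectivity half of \Cref{thm:NefmodAut} forces them into the same $\aut(Y)$-orbit; hence they represent the same class in $\E(Y)/\aut(Y)$.

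I do not expect a genuine obstacle here, since all the work is already contained in \Cref{thm:NefmodAut}: the proof amounts to observing that the two orbit maps are compatible with passing to primitive isotropic rays. The only point needing care is the bookkeeping that the identification of $\E(Y)$ with the primitive isotropic rays of $\Nef^e_Y$ is precisely the restriction of the identification of $\E(\Pbar_Y)$ with the primitive isotropic rays of $\Pbar_Y$; once that is fixed, surjectivity uses only $W(Y) \leq G_Y$ and injectivity is immediate from the theorem.
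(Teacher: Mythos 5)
Your proof is correct and follows essentially the same route as the paper: the paper's (one-line) proof also deduces the corollary from \Cref{thm:NefmodAut} by observing that $\E(Y)$ and $\E(\Pbar_Y)$ are exactly the subsets of $\Nef^e_Y$ and $\Pbar_Y$ cut out by the intrinsic (hence $G_Y$-stable) condition of being represented by primitive isotropic classes, so the bijection restricts. Your write-up merely makes explicit the well-definedness, the surjectivity via \Cref{prop:fundamental_domain}, and the injectivity via the theorem, all of which is the intended argument.
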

\begin{proof}
This follows from the fact that $\E(Y)$ respectively $\E(\Pbar_Y)$ is characterized as the subset of $\Nef^{e}_Y$, respectively $\Pbar_Y$, whose elements are represented by primitive isotropic classes.
\end{proof}

\section{Orbits of elliptic fibrations} \label{sect:orbits-fibrations}
In this section we count the $\Aut(Y)$-orbits of elliptic fibrations on an Enriques surface $Y$ with their ramification degree in terms of its Vinberg group $\Gbar_Y$. Then we explain how to recover the singular fibers of an elliptic fibration in our setting.

\begin{corollary}\label{prop:EYbar/GYbar}
Set $\E(S_Y \otimes \FF_2)$ as the set of non-zero isotropic classes of $S_Y\otimes \FF_2$.
Then
\[\E(Y)/\aut(Y) \cong \E(\Pbar_Y)/G_Y \cong \E(S_Y \otimes \FF_2)/\Gbar_Y.\]
In particular, two elliptic fibrations $\phi_1$,$\phi_2$ of $Y$ with half fibers $f_1$ and $f_2$ are isomorphic if and only if their classes
$\overline{f_i} \in S_Y\otimes \FF_2$ lie in the same $\Gbar_Y$ orbit.
\end{corollary}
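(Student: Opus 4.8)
The statement chains together two bijections.
The first isomorphism $\E(Y)/\aut(Y) \cong \E(\Pbar_Y)/G_Y$ is exactly \Cref{cor:elliptic_bijection}, so the only new content is the second isomorphism $\E(\Pbar_Y)/G_Y \cong \E(S_Y \otimes \FF_2)/\Gbar_Y$.
The plan is to produce this second bijection by reduction modulo $2$, that is, via the map sending a primitive isotropic $f \in \Pbar_Y$ to its image $\overline f \in S_Y \otimes \FF_2$.
First I would check this map is well defined on the level of sets: a primitive isotropic $f \in S_Y$ satisfies $f^2 = 0$, hence $q(\overline f) = f^2/2 \bmod 2 = 0$, so $\overline f$ is isotropic; and $\overline f \neq 0$ because $f$ is primitive (a primitive vector is not $2$-divisible, so it is nonzero mod $2$).
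The reduction map is manifestly $G_Y$-equivariant with respect to the quotient action of $\Gbar_Y$, since $\Gbar_Y$ is by definition the image of $G_Y$ in $O(S_Y \otimes \FF_2)$, so it descends to a well-defined map on the quotients.

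\medskip
\textbf{Injectivity.}
I expect the main obstacle to be injectivity, i.e. showing that two primitive isotropic classes $f_1, f_2 \in \Pbar_Y$ with $\overline{f_1}$ and $\overline{f_2}$ in the same $\Gbar_Y$-orbit are already in the same $G_Y$-orbit.
After translating by a suitable element of $G_Y$ I may assume $\overline{f_1} = \overline{f_2}$, so it suffices to prove: if $\overline{f_1} = \overline{f_2}$ then $f_1$ and $f_2$ are $G_Y$-equivalent.
Here the key input is \Cref{prop:G0}: the $2$-congruence subgroup $G_0 = O(S_Y, \P_Y)(2)$ is contained in $G_Y$.
So it is enough to find an isometry $g \in O(S_Y, \P_Y)(2)$ with $f_1^g = f_2$; such a $g$ automatically lies in $G_Y$.
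This reduces the problem to a purely lattice-theoretic transitivity statement: the $2$-congruence subgroup of $O(E_{10}, \P)$ acts transitively on primitive isotropic vectors of $\Pbar$ with a fixed image in $E_{10} \otimes \FF_2$.

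\medskip
\textbf{Surjectivity.}
For surjectivity I must show every nonzero isotropic $\overline f \in S_Y \otimes \FF_2$ lifts to a primitive isotropic class in $\Pbar_Y$.
Given such an $\overline f$, any lift $f_0 \in S_Y$ has $f_0^2 \equiv 0 \pmod 4$; using the unimodular hyperbolic structure $E_{10} \cong U \oplus E_8$ I would adjust $f_0$ by an element of $2 S_Y$ to obtain a primitive isotropic vector $f$ (so that $f^2 = 0$ exactly and $f$ is not $2$-divisible).
One then replaces $f$ by $-f$ if necessary so that it lies in the chosen positive cone, placing it in $\Pbar_Y$.
Both the surjectivity lift and the injectivity transitivity are instances of standard facts about the $2$-elementary quadratic space $E_{10} \otimes \FF_2$ and the action of the $2$-congruence subgroup; the hardest step is assembling these into the transitivity statement, which I would handle using Eichler's criterion (the transitivity of $O^\sharp$ on primitive vectors of fixed length and fixed image in the discriminant form, applied to $S_Y(2) \cong S_+$), carefully tracking the isotropic boundary rays of $\Pbar_Y$ alongside the interior.
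The final sentence of the statement, comparing half fibers of two elliptic fibrations, is then immediate from the chain of bijections together with the identification of isomorphism classes of elliptic fibrations with $\Aut(Y)$-orbits of half fibers recalled in the introduction.
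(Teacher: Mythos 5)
Your overall structure follows the paper's: the first bijection is quoted from \Cref{cor:elliptic_bijection}, and the second is induced by reduction modulo $2$; the well-definedness, equivariance and surjectivity steps are fine (the paper treats surjectivity as equally routine). The genuine gap is in your injectivity step. You correctly reduce it, via \Cref{prop:G0}, to the statement that $G_0=O(S_Y,\P_Y)(2)$ acts transitively on primitive isotropic classes of $\Pbar_Y$ with fixed image in $S_Y\otimes\FF_2$, but the tool you invoke to prove this transitivity --- ``Eichler's criterion \dots applied to $S_Y(2)\cong S_+$'' --- cannot apply. Every version of Eichler's criterion (Eichler, Gritsenko--Hulek--Sankaran) requires the lattice to contain two orthogonal hyperbolic planes $U\oplus U$ (or at least $U\oplus U(m)$), hence to have signature at least $(2,2)$; the lattice $S_Y(2)\cong E_{10}(2)\cong U(2)\oplus E_8(2)$ is hyperbolic of signature $(1,9)$ and contains no unscaled copy of $U$ at all, since every vector of $E_{10}(2)$ has norm divisible by $4$.

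This is not a cosmetic citation issue: given surjectivity, the transitivity you need is \emph{equivalent} to the equality $\#\left(\E(\Pbar_Y)/G_0\right)=527=\#\E(S_Y\otimes\FF_2)$, i.e.\ to Barth--Peters' count of orbits of the $2$-congruence subgroup, which is the nontrivial input here. That is exactly how the paper argues: the reduction map $\E(\Pbar_Y)/G_0\to\E(S_Y\otimes\FF_2)$ is a surjection of finite sets, the source has $527$ elements by \cite[p.~397, table]{barth-peters:very_general_enriques}, the target has $527$ elements by \cite[(13.1)]{kneser}, hence the map is a bijection, and one then passes to the quotient by $\Gbar_Y=G_Y/G_0$. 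Your proposal in effect hides this key input inside an inapplicable lemma. To repair it, either run the counting argument as the paper does, or give an actual proof of the transitivity --- for instance by a calculus with the ``doubled'' Eichler transvections $x\mapsto x+2(x.c)a-2(x.a)c-2a^2(x.c)c$ (with $c$ isotropic, $a.c=0$), which do lie in $G_0$ --- but that is a genuine argument to be written out, not a standard fact to be cited.
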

\begin{proof}
We have a surjection between $\E(\Pbar_Y)/G_0$ and $\E(S_Y \otimes \FF_2)$, which is injective, because both sets have cardinality $527$. For the first one see \cite[p. 397, table]{barth-peters:very_general_enriques}, the second one is commonly known see e.g. \cite[(13.1)]{kneser}. The map
induces a bijection $\E(\Pbar_Y)/G_Y \cong \E(S_Y \otimes \FF_2)/\Gbar_Y$
because $\Gbar_Y= G_Y/G_0$.
\end{proof}

Let $Y_0$ be an Enriques surface such that $S_-=0$ and $G_{X_0|Y_0}=G_{X_0|Y_0}^0$, for instance a very general complex Enriques surface.
Then $G_0 \cong G_{X_0|Y_0}^0 = G_{X_0|Y_0} \cong G_{Y_0}=\aut(Y_0)$ and $\Nef^{e}_{Y_0}= \Pbar_{Y_0}$ (\cite[Section 0]{barth-peters:very_general_enriques} and \cite{martin:automorphisms_of_unnodal_enriques_surfaces}).
By the previous \Cref{thm:NefmodAut} we obtain a natural map
\[\pi\colon \Nef^{e}_{Y_0}/\aut(Y_0)\cong\Pbar_Y/G_0 \to \Pbar_Y/G_Y \cong \Nef^{e}_Y/\aut(Y)\]
with finite fibers and in the first isomorphism we identify $\Pbar_Y$ with $\Pbar_{Y_0}$ by any isometry $S_{Y_0}\cong S_Y$ mapping $\Pbar_{Y_0}$ to $\Pbar_Y$.
\begin{definition}\label{def:ramificaton}
For $f \in \E(Y)$ an elliptic fibration define its ramification degree $e_f$ as the cardinality of $\pi^{-1}(\{f\aut(Y)\})$.
\end{definition}
Over the complex numbers this can indeed be interpreted as a ramification degree, see \Cref{sect:moduli-general}.
The ramification degree $e_f$ of an elliptic fibration $f \in \E(Y)$ is computed as follows.
\begin{proposition}
 Let $\overline{f}$ be the class of $f \in \E(Y)$ in $S_Y \otimes \FF_2$. Denote by $\stab(\Gbar_Y,\overline{f})$ its stabiliser in $\Gbar_Y$. Then
 \[e_f = \#\left( \overline{f} \cdot \Gbar_Y \right)= [\Gbar_Y:\stab(\Gbar_Y,\overline{f})].\]
\end{proposition}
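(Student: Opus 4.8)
The plan is to unwind the definition of $e_f$ through the identifications already established in \Cref{prop:EYbar/GYbar} and reduce everything to the orbit-stabiliser theorem applied to the action of $\Gbar_Y$ on the set $\E(S_Y \otimes \FF_2)$.

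First I would recall from \Cref{def:ramificaton} that $e_f$ is the number of preimages under $\pi$ of the class $f\aut(Y)$, where $\pi\colon \Pbar_Y/G_0 \to \Pbar_Y/G_Y$ is the quotient map induced by the inclusion $G_0 \subseteq G_Y$. Since $G_0$ and $G_Y$ consist of isometries preserving $\P_Y$, they permute the primitive isotropic rays of $\Pbar_Y$, so $\pi$ restricts to a map of elliptic-fibration orbits $\E(\Pbar_Y)/G_0 \to \E(\Pbar_Y)/G_Y$, and it suffices to count the fiber of this restriction over the orbit of $f$.

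Next I would invoke the chain of bijections from \Cref{prop:EYbar/GYbar}. The key observation is that $G_0 = O(S_Y,\P_Y)(2)$ is by definition the kernel of reduction modulo $2$, hence acts trivially on $S_Y \otimes \FF_2$; therefore the reduction map $\E(\Pbar_Y) \to \E(S_Y \otimes \FF_2)$ is $G_0$-invariant and, as shown in the proof of \Cref{prop:EYbar/GYbar} (both sides have cardinality $527$), descends to a bijection $\E(\Pbar_Y)/G_0 \cong \E(S_Y \otimes \FF_2)$. Combining this with $\E(\Pbar_Y)/G_Y \cong \E(S_Y \otimes \FF_2)/\Gbar_Y$ and the identity $\Gbar_Y = G_Y/G_0$, the map $\pi$ becomes precisely the quotient map $\E(S_Y \otimes \FF_2) \to \E(S_Y \otimes \FF_2)/\Gbar_Y$, under which $f\aut(Y)$ corresponds to the orbit $\overline{f} \cdot \Gbar_Y$.

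Finally, the fiber of this quotient map over $\overline{f} \cdot \Gbar_Y$ is exactly the orbit $\overline{f} \cdot \Gbar_Y$ itself, so $e_f = \#\left(\overline{f} \cdot \Gbar_Y\right)$, and the orbit-stabiliser theorem gives $\#\left(\overline{f} \cdot \Gbar_Y\right) = [\Gbar_Y : \stab(\Gbar_Y, \overline{f})]$. I do not expect a serious obstacle: all ingredients are already in place, and the only point requiring care is verifying that the three identifications are compatible with the two quotient maps — that is, that reduction modulo $2$ genuinely intertwines $\pi$ with the $\Gbar_Y$-quotient — which follows formally from the fact that $\Gbar_Y = G_Y/G_0$ acts on $\E(S_Y\otimes\FF_2)$ through reduction.
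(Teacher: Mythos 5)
Your proof is correct and takes essentially the approach the paper intends: the paper states this proposition without any proof, treating it as immediate from \Cref{def:ramificaton}, the bijections established in \Cref{prop:EYbar/GYbar} (in particular $\E(\Pbar_Y)/G_0 \cong \E(S_Y\otimes\FF_2)$ and its compatibility with $\Gbar_Y = G_Y/G_0$), and the orbit-stabiliser theorem. Your write-up simply makes that implicit chain of identifications explicit, including the one point worth checking — that the fiber of $\pi$ over $f\aut(Y)$ lies entirely in the elliptic part and that reduction mod $2$ intertwines the two quotient maps — so there is nothing missing.
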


\begin{theorem}\label{thm:elliptic_fibrations}
The weighted count of isomorphism classes of elliptic fibrations $Y \to \mathbb{P}^1$ is $527$. More precisely,
\[527 = \sum_{f \in \E(Y)/\Aut(Y)} e_f.\]
\end{theorem}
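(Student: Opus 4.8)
The plan is to reduce the weighted count to a trivial orbit partition, using the two results immediately preceding the theorem. First I would observe that the finite kernel of $\psi\colon \Aut(Y)\to O(S_Y)$ acts trivially on $S_Y$, hence on the set $\E(Y)$ of primitive isotropic rays identified with elliptic fibrations; consequently $\E(Y)/\Aut(Y)=\E(Y)/\aut(Y)$, so the summation index of the theorem coincides with the one appearing in \Cref{prop:EYbar/GYbar}.

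Next I would invoke \Cref{prop:EYbar/GYbar}, which furnishes a bijection $\E(Y)/\aut(Y)\cong \E(S_Y\otimes\FF_2)/\Gbar_Y$ sending the class of an elliptic fibration $f$ to the $\Gbar_Y$-orbit of its reduction $\overline{f}$. Under this identification, the proposition stated just above the theorem matches the ramification degree $e_f=\#\pi^{-1}(\{f\aut(Y)\})$ with the cardinality $\#(\overline{f}\cdot\Gbar_Y)$ of the corresponding orbit.

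The key step is then purely combinatorial: the $\Gbar_Y$-orbits partition the finite set $\E(S_Y\otimes\FF_2)$, so the sum of their cardinalities equals $\#\E(S_Y\otimes\FF_2)$. Letting $f$ range over a set of representatives of $\E(Y)/\Aut(Y)$, this gives
\[
\sum_{f\in\E(Y)/\Aut(Y)} e_f
= \sum_{\overline{f}\in\E(S_Y\otimes\FF_2)/\Gbar_Y} \#\left(\overline{f}\cdot\Gbar_Y\right)
= \#\E(S_Y\otimes\FF_2).
\]
Finally I would recall, as already used in the proof of \Cref{prop:EYbar/GYbar}, that the non-degenerate $\FF_2$-quadratic space $S_Y\otimes\FF_2$ is a $10$-dimensional space of plus type (it contains a totally isotropic subspace of dimension $5$), so its number of non-zero isotropic vectors is $2^{9}+2^{5}-2^{4}-1=527$ (see \cite[(13.1)]{kneser}). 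This yields the stated identity.

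I expect no genuine obstacle: all the substantive content — the bijection of \Cref{prop:EYbar/GYbar} and the orbit-length formula $e_f=\#(\overline{f}\cdot\Gbar_Y)$ — is already in place. The only point deserving care is verifying that the two descriptions of the summation index (orbits of $\Aut(Y)$ on $\E(Y)$ versus orbits of $\Gbar_Y$ on $\E(S_Y\otimes\FF_2)$) are matched compatibly with the weight $e_f$, which is exactly what the cited bijection and the preceding proposition guarantee.
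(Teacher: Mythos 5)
Your proposal is correct and follows essentially the same route as the paper: both reduce the weighted sum to the orbit partition of a $527$-element set under the $\Gbar_Y$-action, using the bijection $\E(Y)/\aut(Y)\cong \E(\Pbar_Y)/G_Y\cong\E(S_Y\otimes\FF_2)/\Gbar_Y$ together with the identification of $e_f$ with the orbit length. The only cosmetic difference is that you count the $527$ nonzero isotropic vectors directly in the quadratic space $S_Y\otimes\FF_2$ (Kneser), whereas the paper decomposes $\E(\Pbar_Y)/G_0$ (whose cardinality is Barth--Peters' $527$) into $\Gbar_Y$-orbits; these two sets are identified in \Cref{prop:EYbar/GYbar}, on which both arguments rely.
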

\begin{proof}
 The order of $\E(\Pbar_Y)/G_0$ is $527$. The group $\Gbar_Y$ acts on it. Writing $\E(\Pbar_Y)/G_0$ as a disjoint union of
 its $\Gbar_Y$ orbits and applying \Cref{cor:elliptic_bijection} yields the claim.
\end{proof}
\begin{corollary}
 A complex Enriques surface has at most $527$ isomorphism classes of elliptic fibrations.
 This number is attained for a very general Enriques surface.
\end{corollary}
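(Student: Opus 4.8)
The plan is to deduce the corollary directly from \Cref{thm:elliptic_fibrations}, treating it as an easy consequence rather than an independent result. The key observation is that \Cref{thm:elliptic_fibrations} gives, for \emph{any} complex Enriques surface $Y$, the identity $527 = \sum_{f \in \E(Y)/\Aut(Y)} e_f$, where each ramification degree $e_f = \#(\overline{f}\cdot\Gbar_Y)$ is a positive integer (being the cardinality of a group orbit, it is at least $1$). First I would note that the number of isomorphism classes of elliptic fibrations on $Y$ is precisely the number of terms in this sum, i.e.\ $\#\left(\E(Y)/\Aut(Y)\right)$. Since each summand $e_f \geq 1$, the number of summands cannot exceed the total, giving
\[
\#\left(\E(Y)/\Aut(Y)\right) \;=\; \sum_{f \in \E(Y)/\Aut(Y)} 1 \;\leq\; \sum_{f \in \E(Y)/\Aut(Y)} e_f \;=\; 527.
\]
This establishes the upper bound of $527$ isomorphism classes with essentially no additional work.

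For the second assertion, I would characterize exactly when the bound is attained: equality $\#(\E(Y)/\Aut(Y)) = 527$ holds if and only if every ramification degree $e_f$ equals $1$, which by \Cref{prop:EYbar/GYbar} means $\Gbar_Y$ acts trivially on $\E(S_Y\otimes\FF_2)$, equivalently every nonzero isotropic class of $S_Y \otimes \FF_2$ forms its own $\Gbar_Y$-orbit. The natural candidate is a very general Enriques surface $Y_0$, for which the excerpt already records (in the paragraph following \Cref{prop:EYbar/GYbar}) that $\Nef^e_{Y_0} = \Pbar_{Y_0}$ and $G_0 \cong G_{Y_0} = \aut(Y_0)$, so that $\Gbar_{Y_0} = G_{Y_0}/G_0$ is trivial. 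With $\Gbar_{Y_0}$ trivial, each orbit $\overline{f}\cdot\Gbar_{Y_0}$ is a singleton and every $e_f = 1$, so the sum has exactly $527$ terms. This recovers the original Barth--Peters count and shows the bound is sharp.

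The step requiring the most care is ensuring that the $527$ isomorphism classes for the very general surface are genuinely distinct and that no subtlety hides in the identification $\E(\Pbar_{Y_0})/G_0 \cong \E(S_{Y_0}\otimes\FF_2)$. This is exactly the bijection proved in \Cref{prop:EYbar/GYbar}, which relies on the matching cardinalities ($527$) of the two sides established via \cite{barth-peters:very_general_enriques} and \cite{kneser}. I expect the only genuine obstacle is bookkeeping: confirming that for the very general $Y_0$ the Vinberg group is trivial, which follows from $G_{X_0|Y_0} = G_{X_0|Y_0}^0$ (the defining condition on $Y_0$) together with \Cref{prop:G0} giving $G_0 = \Gbar_Y$ trivial precisely when $G_Y = G_0$. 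All of these inputs are already in place, so the proof is a short argument combining the weighted count with the positivity of the weights.
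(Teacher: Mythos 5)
Your proposal is correct and takes essentially the same route as the paper, which states the corollary without a separate proof precisely because it follows from \Cref{thm:elliptic_fibrations} in the way you describe: each $e_f \geq 1$ forces at most $527$ orbits, and for a very general Enriques surface $\Gbar_{Y_0}$ is trivial (since $G_{Y_0}=G_0$), so all $e_f=1$ and the bound is attained.
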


In what follows we show how to recover the ADE types and multiplicity of the reducible singular fibers
 of an elliptic fibration from the class $\overline{f} \in \E(S_Y\otimes \FF_2)$ of a half fiber.
\begin{lemma}\label{lem:injective}
Let $\Phi^+ \subseteq E_8$ be a positive root system and $\overline{\Delta}(E_8):= \{x \in E_8 \otimes \FF_2 \mid x^2/2 = 1\}$.
We consider the set of roots $\Phi$ of $E_8$ as a graph by joining $x,y \in \Phi$ with an edge if and only if $x.y \equiv 1 \mod 2$ and similarly for its image $\overline{\Phi}$ in $\overline{\Delta}(E_8)$.
Then the natural map $\Phi^+ \to \overline{\Delta}(E_8)$ is an isomorphism of graphs
\end{lemma}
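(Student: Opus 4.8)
The plan is to establish the graph isomorphism $\Phi^+ \to \overline{\Delta}(E_8)$ by first counting the two sets, showing the natural map is a bijection, and then verifying that it preserves and reflects the edge relation. Both sides have a well-understood size: the positive root system $\Phi^+$ of $E_8$ has exactly $120$ elements, while $\overline{\Delta}(E_8) = \{x \in E_8 \otimes \FF_2 \mid x^2/2 = 1\}$ is a level set of the $\FF_2$-valued quadratic form $q(x) = x^2/2 \bmod 2$ on the $8$-dimensional nondegenerate quadratic space $E_8 \otimes \FF_2$. Since $E_8$ is unimodular and even, $E_8 \otimes \FF_2$ carries a nondegenerate quadratic form of plus type (it contains a $4$-dimensional totally isotropic subspace), so the number of nonzero vectors with $q(x) = 1$ is $2^{7} - 2^{3} = 120$. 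Thus both sets have cardinality $120$, and it suffices to show the natural reduction map $\Phi^+ \to \overline{\Delta}(E_8)$ is injective.

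\emph{The main obstacle} will be injectivity: I must show that two distinct positive roots $r, s \in \Phi^+$ never reduce to the same class mod $2$. Equivalently, if $r \equiv s \pmod{2 E_8}$ with $r, s$ roots, then $r = s$ (within the positive system). If $r - s \in 2 E_8$ with $r \neq \pm s$, write $r - s = 2w$; then $(r-s)^2 = -4 + 4 - 2(r.s) = -2(r.s)$ must be divisible by $4$, forcing $r.s \equiv 0 \pmod 2$, i.e. $r.s \in \{0, \pm 2\}$. The case $r.s = \pm 2$ gives $r = \mp s$, already excluded; so $r.s = 0$, but then $(r-s)^2 = -4$, so $w^2 = -1$, which is impossible in the even lattice $E_8$. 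Hence $r \equiv s \pmod 2$ forces $r = \pm s$, and since we work with a positive root system, $r = s$. This proves injectivity, and with the cardinality count the reduction map is a bijection.

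\emph{The final step} is to verify the edge relation is matched on both sides. By definition, $r, s \in \Phi^+$ are joined in $\Phi$ exactly when $r.s \equiv 1 \pmod 2$, and their images $\overline r, \overline s$ are joined in $\overline{\Delta}(E_8)$ exactly when $\overline r . \overline s = 1$ in $\FF_2$; since the $\FF_2$-bilinear form on $E_8 \otimes \FF_2$ is the reduction mod $2$ of the integral form, these two conditions are literally the same statement. Therefore the bijection automatically identifies the edge sets, and the natural map $\Phi^+ \to \overline{\Delta}(E_8)$ is an isomorphism of graphs. The entire argument rests on the even-unimodular structure of $E_8$ (controlling the count via plus-type) and the absence of vectors of square $-1$ (driving the injectivity), neither of which requires any case analysis beyond the short computation above.
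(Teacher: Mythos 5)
Your proof is correct and follows the same overall skeleton as the paper's: establish injectivity of the reduction map $\Phi^+ \to \overline{\Delta}(E_8)$, conclude bijectivity from the equality of cardinalities ($120$ on both sides), and observe that the edge relations agree because the $\FF_2$-bilinear form is literally the reduction of the integral one. The one genuine difference is in how injectivity is obtained: the paper simply cites an earlier result of Brandhorst--Shimada (that a positive root system embeds into $E_8 \otimes \FF_2$), whereas you prove it from scratch by the lattice computation showing that $r \equiv s \pmod{2E_8}$ forces $r = \pm s$ (since $r.s = 0$ would give $(r-s)^2 = -4$, i.e.\ a vector of square $-1$ in the even lattice $E_8$). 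This makes your argument self-contained, which is a small gain; you also justify the count $\#\overline{\Delta}(E_8) = 2^7 - 2^3 = 120$ via the plus-type structure of the quadratic space, which the paper leaves implicit. One arithmetic slip to fix: with roots of square $-2$ you should have $(r-s)^2 = r^2 + s^2 - 2(r.s) = -4 - 2(r.s)$, not ``$-4 + 4 - 2(r.s) = -2(r.s)$''; your subsequent deductions ($r.s$ even, and $(r-s)^2 = -4$ when $r.s = 0$) are consistent with the correct formula, so nothing downstream breaks, but the displayed identity as written is wrong.
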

\begin{proof}
We already know that for $\Phi^+$ a positive root system, it embeds into $E_{8}\otimes \FF_2$ \cite[Lemma 3.8]{brandhorst_shimada:tautaubar}. Hence the natural map is injective.
Since both sets are of order $120$, it is bijective. Moreover, the map is compatible with the bilinear forms defining the edges. Hence it is an isomorphism of graphs.
\end{proof}

\begin{remark}
 To recover the type of a root system from its graph, one first decomposes the graph into its connected components and is thus reduced to considering irreducible root systems only. Now an irreducible root system is uniquely determined by the number of its edges and vertices.
\end{remark}
Let $\overline{\Delta}(Y)$ denote the image of $\Delta(Y)$ in $S_Y\otimes \FF_2$.
For $f \in \E(Y)$ we denote by $\Delta^+(Y, f)$ the image of $\Delta^+(Y) \cap f^\perp$ in $f^\perp/\ZZ f$.
Recall that $\Delta^+(Y,f)$ is a positive root system consisting of effective split roots supported on a fiber of $f$. Its ADE-type is the ADE-type of the reducible singular fibers of $f$.
Denote by $\overline{\Delta}(Y,  \overline{f})$ the image of \(\overline{f}^\perp \cap \overline{\Delta}(Y) \) in  \(\overline{f}^\perp/ \FF_2 \overline{f}.\)
\begin{proposition}\label{prop:ADE-type}
 Let $f \in \E(Y)$.
 Then the  natural map \(f ^\perp / \ZZ f \to \overline{f}^\perp / \FF_2 \overline{f}\) induces an isomorphism
 \[\Delta^+(Y, f) \to \overline{\Delta}(Y,\overline{f})\]
 of graphs.
 In particular, the connected components of \(\overline{\Delta}(Y,\overline{f})\) correspond to the reducible singular fibers of the fibration induced by $f$, and their ADE-types correspond.
 \end{proposition}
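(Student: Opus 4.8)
The plan is to reduce the natural map to ordinary reduction modulo $2$ on an $E_8$-lattice and then feed this into \Cref{lem:injective}. First I would record that $f^\perp$ is a saturated sublattice of $S_Y$, so that $f^\perp \otimes \FF_2 \to S_Y \otimes \FF_2$ is injective with image $\overline f^\perp$ (both are $9$-dimensional over $\FF_2$). Dividing out $\ZZ f$, respectively $\FF_2\overline f$, identifies $(f^\perp/\ZZ f)\otimes\FF_2 \cong \overline f^\perp/\FF_2\overline f$ compatibly with the induced quadratic forms; note that $q$ descends to these quotients because $\overline f$ is isotropic and lies in $\overline f^\perp$. Since $f$ is a primitive isotropic vector of the unimodular lattice $S_Y\cong E_{10}$, the quotient $f^\perp/\ZZ f$ is an even, negative definite, unimodular lattice of rank $8$, hence isometric to $E_8$. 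Thus the natural map of the proposition is exactly reduction modulo $2$, $E_8 \to E_8\otimes\FF_2$, and under the identification $\overline f^\perp/\FF_2\overline f \cong E_8\otimes\FF_2$ the set $\overline\Delta(E_8)=\{x : q(x)=1\}$ is precisely where all roots, and all classes of splitting roots in $\overline f^\perp$, land.

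Next I would invoke \Cref{lem:injective}. Since $\Delta^+(Y,f)$ is a positive root system inside $E_8 = f^\perp/\ZZ f$, it is contained in some full positive root system $\Phi^+$ of $E_8$. By \Cref{lem:injective} the reduction map $\Phi^+ \to \overline\Delta(E_8)$ is an isomorphism of graphs, so its restriction to $\Delta^+(Y,f)$ is injective and edge-preserving (an edge on either side being the condition $x.y\equiv 1\bmod 2$, and reduction is compatible with the bilinear forms). Hence the restriction is a graph isomorphism onto its image, and the proposition reduces to the set-theoretic equality: the image of $\Delta^+(Y,f)$ under reduction modulo $2$ equals $\overline\Delta(Y,\overline f)$ inside $E_8\otimes\FF_2$.

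The inclusion $\subseteq$ is immediate: an effective splitting root $r\in\Delta^+(Y)\cap f^\perp$ has $\overline r\in\overline f^\perp\cap\overline\Delta(Y)$, and the reduction of its class in $f^\perp/\ZZ f$ is the class of $\overline r$ in $\overline f^\perp/\FF_2\overline f$. The reverse inclusion is the crux, and the decisive observation is that \emph{among roots, the splitting property depends only on the residue class modulo $2$}. Indeed, the criterion of \Cref{lem:splitting-root} asks for $v\in S_-$ with $v^2=-4$ and $\pi^*r\equiv v \pmod{2S_X}$, and $\pi^*r \bmod 2S_X$ depends only on $\overline r$ since $\pi^*(2S_Y)\subseteq 2S_X$; so if $r,r''$ are roots with $\overline r=\overline{r''}$, then $r$ splits iff $r''$ does. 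Granting this, take $\overline x\in\overline f^\perp\cap\overline\Delta(Y)$, say $\overline x=\overline{r'}$ with $r'$ a splitting root. By \Cref{lem:injective} there is a unique positive root $\rho_0\in\Phi^+$ whose reduction $\overline{\rho_0}$ equals the class of $\overline x$ modulo $\FF_2\overline f$. The lifts of $\rho_0$ to $f^\perp$ form a coset of $\ZZ f$, and a lift of a root of $E_8$ is again a $(-2)$-root because $f$ is isotropic; their reductions differ by $\{0,\overline f\}$, so some lift $r_1\in f^\perp$ satisfies $\overline{r_1}=\overline x=\overline{r'}$. By the observation $r_1$ is a splitting root; replacing it by $-r_1$ if necessary (which does not change $\overline{r_1}$), Riemann--Roch makes it effective, so $\rho_0\in\Delta^+(Y,f)$ and it reduces to the class of $\overline x$, giving $\supseteq$.

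Finally, the ``in particular'' is formal: a graph isomorphism carries connected components to connected components and preserves vertex and edge counts, so by the remark on recovering root systems from their graphs it preserves ADE-types; together with the recalled fact that $\Delta^+(Y,f)$ is the root system of the reducible fibers of the fibration attached to $f$, the connected components of $\overline\Delta(Y,\overline f)$ match the reducible singular fibers and their types. I expect the main obstacle to be exactly the reverse inclusion above; everything there hinges on reading \Cref{lem:splitting-root} as a congruence condition modulo $2S_X$, which is what makes ``being a splitting root'' a property of $\overline r$ alone.
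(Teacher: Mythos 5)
Your proof is correct, and its skeleton matches the paper's: injectivity and edge\--preservation come from \Cref{lem:injective} after identifying $f^\perp/\ZZ f$ with $E_8$ and $\overline{f}^\perp/\FF_2\overline{f}$ with $E_8\otimes\FF_2$, and the ADE statement is then formal. Where you genuinely go beyond the printed proof is the surjectivity. The paper disposes of it in one line (``by the definitions its image is $\overline{\Delta}(Y,\overline{f})$ because $\overline{\Delta}(Y)$ is the image of $\Delta(Y)$ under reduction mod $2$''), which conceals exactly the point you isolate: an element of $\overline{f}^\perp\cap\overline{\Delta}(Y)$ is the reduction of a splitting root that is orthogonal to $f$ only modulo $2$, whereas $\Delta^+(Y,f)$ is built from splitting roots that are integrally orthogonal to $f$. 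Your resolution --- reading \Cref{lem:splitting-root} as the congruence $\pi^*r\equiv v \pmod{2S_X}$, so that among roots the splitting property depends only on the class in $S_Y\otimes\FF_2$, then lifting the corresponding root of $f^\perp/\ZZ f\cong E_8$ to a root of $f^\perp$ with the exact mod-$2$ class and concluding that it splits --- is precisely the argument needed to make the paper's ``by the definitions'' rigorous. So your write-up is not a different route so much as a completion of the same route; the extra lemma-level insight (splitting is a mod-$2$ property among roots) is what the paper leaves implicit.

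One small repair: the effectivity of one of $\pm r_1$ is not literally ``Riemann--Roch,'' since for a numerical $(-2)$-class $D$ on an Enriques surface one has $\chi(D)=1+D^2/2=0$, so Riemann--Roch on $Y$ yields nothing (indeed on an unnodal Enriques surface no $(-2)$-class is effective at all). What you need is that every \emph{splitting} root is $\pm$ effective: writing $\pi^*r_1=\tilde{r}+\epsilon^*(\tilde{r})$, Riemann--Roch on the K3 cover makes $\tilde{r}$ or $-\tilde{r}$ effective, and pushing forward shows $r_1=\pi_*\tilde{r}$ or $-r_1$ is effective. Equivalently, you may simply invoke the partition $\Delta(Y)=\Delta^+(Y)\sqcup-\Delta^+(Y)$ that the paper already uses when it describes the ample cone as the chamber defined by $\Delta(Y)^+$. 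With that substitution your argument is complete.
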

\begin{proof}
By \Cref{lem:injective} the natural map is injective on the positive root system \(\Delta^+(Y,f)\). By the definitions its image is \(\overline{\Delta}(Y,\overline{f})\) because $\overline{\Delta}(Y)$ is the image of $\Delta(Y)$ under reduction mod $2$.
\end{proof}
Recall that an elliptic fibration $Y \to \mathbb{P}^1$ of an Enriques surface has exactly two fibers of multiplicity $2$.
We want to decide whether a reducible fiber is multiple or not in terms of $\overline{f}$ and $\overline{\Delta}(Y)$.
\begin{lemma}\label{lem:6R}
Let \(R\subseteq E_{10}\) be an irreducible root lattice and $R'= R\otimes \QQ \cap E_{10}$ its primitive closure.
Then \(6 R' \subseteq R\).
\end{lemma}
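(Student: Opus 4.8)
The plan is to reduce $6R'\subseteq R$ to a bound on the exponent of the finite abelian group $R'/R$, since $6R'\subseteq R$ holds exactly when that exponent divides $6$. To set things up, note that $R'$ is integral (being a sublattice of the integral lattice $E_{10}$) and that every $v\in R'$ lies in $R\otimes\QQ$ and pairs integrally with all of $R$; hence $R\subseteq R'\subseteq R^\vee$ and $R'/R$ is a subgroup of the discriminant group $R^\vee/R$. The key input is the \emph{squared} index bound: from the standard relation $|R'^\vee/R'|=|R^\vee/R|/[R':R]^2$, whose left-hand side is a positive integer, one obtains $[R':R]^2\mid|R^\vee/R|$.

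I would then use the signature of $E_{10}$ to restrict $R$. Root lattices are negative definite in our convention and $E_{10}$ has signature $(1,9)$, so $\rk R\le 9$ and $R$ is one of $A_1,\dots,A_9$, $D_4,\dots,D_9$, $E_6$, $E_7$, $E_8$. For all of these $R^\vee/R$ is cyclic of order at most $10$, the sole exception being $R=D_{2k}$, where $R^\vee/R\cong(\ZZ/2)^2$.

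The two cases then close immediately. If $R^\vee/R$ is cyclic of order $m\le 10$, then $R'/R$ is cyclic of order $d$ with $d^2\mid m$, and $d^2\le 10$ forces $d\le 3$, so the exponent of $R'/R$ is $1$, $2$, or $3$. If $R=D_{2k}$, then $R'/R\subseteq(\ZZ/2)^2$ has exponent at most $2$. In every case the exponent divides $6$, which gives $6R'\subseteq R$.

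There is no real obstacle in this argument; its whole content is the use of the squared divisibility $[R':R]^2\mid|R^\vee/R|$ rather than the naive $[R':R]\mid|R^\vee/R|$. The latter would only bound the exponent by $10$ for $R=A_9$, which is too weak, whereas the square forces the exponent to be at most $3$ and explains the uniform multiplier $6=\lcm(2,3)$, the factor $3$ being forced by $A_8$ and the factor $2$ by $A_7$.
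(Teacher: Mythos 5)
Your proof is correct, but it takes a genuinely different route from the paper's. The paper uses that $R'$ is \emph{even} (being a sublattice of the even lattice $E_{10}$), so that $R'/R$ is a totally isotropic subgroup of the discriminant group $(R^\vee/R,q)$, and then checks case by case that every totally isotropic subgroup $T$ satisfies $2T=0$, except for $A_8$ where $3T=0$. You never touch the quadratic form: you use only integrality of $R'$, the index formula $|R^\vee/R|=[R':R]^2\,|R'^\vee/R'|$, and the classification of the discriminant groups of the rank $\le 9$ irreducible root lattices as abstract abelian groups (cyclic of order at most $10$, except $(\ZZ/2)^2$ for $D_{\mathrm{even}}$). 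Both arguments rest on the same rank restriction coming from the signature $(1,9)$ of $E_{10}$, and your squared divisibility $[R':R]^2\mid|R^\vee/R|$ in fact recovers the paper's dichotomy exactly: index $3$ forces $9\mid|R^\vee/R|$, which among the listed lattices happens only for $A_8$, while index $2$ forces $4\mid|R^\vee/R|$. What your approach buys is elementarity and slightly greater generality: it needs no knowledge of the discriminant \emph{forms}, only of the group orders, and it applies verbatim to integral (not necessarily even) overlattices. What the paper's approach buys is finer information: total isotropy is a stronger constraint than the order-theoretic one, pins down which subgroups can actually occur, and would remain effective in situations where a uniform bound like $|R^\vee/R|\le 10$ is unavailable.
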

\begin{proof}
Since \(R'\) is even, \( R'/R \) is a totally isotropic subgroup of the discriminant group of $R$ (with respect to the discriminant quadratic form). By assumption \(R\) is isomorphic to \(A_n (n\leq 9)\); \(D_n (4 \leq n\leq 9)\); \(E_6, E_7, E_8\). A case by case check reveals that any totally isotropic subgroup $T$ of the discriminant group of $R$ satisfies $2T =0$ except for $A_8$ where $3 T =0$. Thus in any case \(6T =0 \).
\end{proof}
\begin{proposition}\label{prop:multiple}
Let $f \in \E(Y)$ and $C$ be a connected component of $\overline{\Delta}(Y, \overline{f})$.
Set
$\psi \colon \overline{f}^\perp \to \overline{f}^\perp / \FF_2 \overline{f}$ and
 $\overline{\Delta}(C) := \overline{\Delta}(Y) \cap \psi^{-1}(C)$.
Then the fiber corresponding to $C$ is multiple if and only if
$\overline{f}$ is contained in the $\FF_2$-linear hull of $\overline{\Delta}(C)$.
\end{proposition}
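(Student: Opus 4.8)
The plan is to translate both sides of the equivalence into integral lattice data attached to the fiber, dispose of the reverse implication directly, and reduce the forward implication to a faithfulness statement for reduction modulo $2$ that is governed by \Cref{lem:6R}.

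First I set up the geometric dictionary. Let $C_0,\dots,C_n$ be the irreducible components of the fiber $F$ corresponding to $C$; these are smooth rational curves, hence classes in $\R(Y)\subseteq\Delta^+(Y)$, and they lie in $f^\perp$ since $f\cdot C_i=\tfrac12 F\cdot C_i=0$. Writing $m_0,\dots,m_n$ for the (coprime) Kodaira multiplicities, the fiber class gives the relation $\sum_i m_iC_i=cf$, where $c=2$ if $F$ is non-multiple and $c=1$ if $F$ is multiple, because a multiple fiber is $2F_0$ with half-fiber $F_0=\sum_i m_iC_i$ of class $f$. By \Cref{prop:ADE-type} the images $\overline{C_i}$ lie in $\overline\Delta(C)$. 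Reducing the relation modulo $2$ gives $\sum_i m_i\,\overline{C_i}=c\,\overline f$. In the multiple case $c=1$, so $\overline f=\sum_i m_i\,\overline{C_i}$ lies in the $\FF_2$-hull of $\overline\Delta(C)$; this settles the implication ``multiple $\Rightarrow\overline f$ in the hull''. Note also that the $\QQ$-linear independence of the $C_i$ and coprimality of the $m_i$ force $f\notin\Lambda$ in the non-multiple case, where $\Lambda:=\langle C_0,\dots,C_n\rangle$.

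For the converse I pass to the quotient lattice. As $f$ is primitive isotropic, $f^\perp/\ZZ f\cong E_8$; write $\kappa\colon f^\perp\to f^\perp/\ZZ f$ for the projection. For a split root $r\in\Delta(Y)\cap f^\perp$ one has $\kappa(r)^2=r^2=-2$, so $\kappa(r)$ is a root of $E_8$, and if $r$ reduces into $C$ then \Cref{lem:injective} and \Cref{prop:ADE-type} force $\kappa(r)=\pm\alpha$ for a root $\alpha$ of the finite root lattice $R$ of the same $ADE$-type as $F$. Hence the $\FF_2$-hull of $\overline\Delta(C)$ equals the reduction modulo $2$ of the lattice $N:=\langle\,r\in\Delta(Y)\cap f^\perp:\kappa(r)\in R\,\rangle$, which is sandwiched between $\Lambda$ and its primitive closure $\Lambda'=(\Lambda\otimes\QQ)\cap f^\perp$. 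The reverse implication becomes the claim that, for a non-multiple fiber, $\overline f\notin$ (reduction of $N$); equivalently reduction modulo $2$ detects $f\notin\Lambda$. This splits into: (i) $N=\Lambda$, i.e.\ no split root into $C$ differs from a component by an odd multiple of $f$; and (ii) $\overline f\notin\Lambda+2f^\perp$. For (i) I use that, by \Cref{lem:splitting-root}, whether a root splits depends only on its class in $S_Y\otimes\FF_2$; together with the affine root structure this restricts the admissible $f$-coordinates and rules out the odd case for a non-multiple fiber, whose effective half-fibers realising $f$ are supported away from $F$.

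For (ii) I analyse $A:=\Lambda'/\Lambda$. The class of $f$ is a distinguished $2$-torsion element of $A$, fitting into an extension $0\to\ZZ/2\to A\to R'/R\to 0$ with $R'$ the primitive closure of $R$ in $E_8$. By \Cref{lem:6R} the group $R'/R$ is $2$-elementary, except for type $A_8$ where it is $\ZZ/3$; one must show the class of $f$ is not divisible by $2$ in $A$. For $A_8$ this is immediate ($A\cong\ZZ/6$ and $[f]=3\notin 2A$), and in the remaining cases it reduces, via (i) and the fact that $\Lambda$ is generated by roots, to checking that the only $\FF_2$-dependencies among the $\overline{C_i}$ come from the null vector $\sum_i m_i\overline{C_i}$ and from the $2$-torsion of $R'/R$. \textbf{The main obstacle is precisely this last point}: a non-primitive embedding $R\subseteq E_8$ produces extra linear relations among the $\overline{\alpha_i}$ beyond the null relation, and one must verify that none of them pairs nontrivially with the $f$-coordinate (equivalently, that $A$ carries no $\ZZ/4$ over the class of $f$). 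Controlling this torsion is where \Cref{lem:6R} is indispensable, and it is exactly what separates the combinatorial $ADE$-type of $F$ — already recovered in \Cref{prop:ADE-type} — from the finer datum of its multiplicity.
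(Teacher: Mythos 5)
The overall skeleton of your argument agrees with the paper's proof: you set $\Lambda$ to be the lattice generated by the fiber components (the paper calls it $R$), observe that the fiber is multiple if and only if $f\in\Lambda$, prove the easy implication by reducing $\sum_i m_iC_i = cf$ modulo $2$, and reduce the converse to showing that $\overline{f}$ in the span of $\overline{\Delta}(C)$ forces $f\in\Lambda$, via the quotient $A=\Lambda'/\Lambda$ for the primitive closure $\Lambda'$. The problem is that you never finish this converse. You split it into (i) the hull equals the reduction of $\Lambda$, which you justify only with a vague appeal to ``the affine root structure'' (the paper asserts the analogous identification equally tersely, so this is not the main differentiator), and (ii) $\overline{f}$ does not lie in the reduction of $\Lambda$ when the fiber is non-multiple. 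For (ii) you handle the $A_8$ case, and then you explicitly concede that the remaining cases require showing ``that $A$ carries no $\ZZ/4$ over the class of $f$,'' calling this the main obstacle and asserting that \Cref{lem:6R} is ``indispensable'' for it without ever showing how it applies. Since that statement is essentially equivalent to the proposition itself, the proposal stops exactly at the decisive step: it is a correct reduction, not a proof.

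What the paper does at this point is a short arithmetic trick that bypasses any analysis of the extension $0\to\ZZ/2\to A\to R'/R\to 0$: from $\overline{f}\in(R+2R')/2R'$ one writes $f=r+2r'$ with $r\in R$, $r'\in R'$ (possible because $f\in R\otimes\QQ$), then $3f=3r+6r'\in R+6R'=R$ by \Cref{lem:6R}, and finally $f=3f-2f\in R$ because $2f$, being the fiber class $\sum_i m_iC_i$, always lies in $R$. This is precisely the mechanism you were looking for and did not find: multiplying by $3$ kills the possible $3$-torsion ($A_8$) and subtracting $2f$ kills the $2$-torsion, so no case distinction on the torsion of $A$ is needed. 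One can say in your defense that your unease about this point is not unreasonable: \Cref{lem:6R} is stated and proved for nondegenerate irreducible root lattices, whereas the component lattice of a fiber is degenerate (it contains the isotropic class $2f$ or $f$), so the paper's citation of \Cref{lem:6R} here is itself quite terse and requires the reader to transport the lemma through the quotient $f^\perp/\ZZ f$. But the paper does supply a complete chain of deductions closing the converse, and your write-up does not; as submitted, the ``if'' direction of the proposition remains unproven.
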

\begin{proof}
 Let $R \subseteq S_Y$ be the lattice generated by the irreducible components of the fiber corresponding to $C$. Then $[\ZZ f : \ZZ f \cap R] \in \{1, 2\}$ and the fiber is multiple if and only if
 the index is $1$. To determine the index, it suffices to check if $f \in R$.
 Let $R' = R \otimes \QQ \cap S_Y$ be the primitive closure of $R$ in $S_Y$.
 Then the linear hull of $\overline{\Delta}(C)$ is $(R+2R')/2R'$.
 Clearly, if $f \in R$, then $\overline{f} \in (R+2R')/2R'$. If conversely $\overline{f} \in (R+2R')/2R'$,
 then we can write \(f = r + 2r'\) with $r \in R$ and $r' \in R'$.
 Then \(3f = 3r + 6r' \in R+6R'\) which equals \(R\) by \Cref{lem:6R}. Thus \(f = 3f - 2f\) lies in \(R\).
\end{proof}

\section{Orbits of nef and big divisors}

Here we extend the results on elliptic fibrations to arbitary nef divisors.
To this end let $\overline{O(S_Y,h)}$ denote the image of $O(S_Y,h)$ in $O(S_Y\otimes \FF_2)$. The following theorem lets us count
$\Aut(Y)$-orbits of nef polarizations of $Y$ with their ramification degree. We will see in \Cref{thm:branch} that this terminology is justified over $\CC$.\\

For an element  $h\in S_Y$ we denote by $hO(S_Y)\subseteq S_Y$ its orbit by $O(S_Y)$. The \emph{ramification degree} of $h'=h^g \in hO(S_Y)$ is defined as the index
\[r_{h'} = \left[\Gbar_Y \colon \Gbar_Y \cap \overline{g}^{-1}\overline{O(S_Y,h)}\overline{g}\right].\]

\begin{theorem}\label{thm:fiber_charp}
Let $Y$ be an Enriques surface and $h \in S_Y$ with $h^2\geq 0$.
Then there is a bijection
\[\left(\Nef(Y)\cap hO(S_Y)\right) / \Aut(Y) \cong \overline{O(S_Y,h)}\backslash O(S_Y\otimes \FF_2)/\Gbar_Y,\]
and
\[\left[O(S_Y\otimes \FF_2):\overline{O(S_Y,h)}\right]= \sum_{h'}r_{h'},\]
where the sum runs over a complete set of representatives of $\left(\Nef(Y)\cap hO(S_Y)\right)/\Aut(Y).$
\end{theorem}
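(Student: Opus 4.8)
The plan is to establish the bijection through a chain of identifications and then read off the counting formula from a standard double-coset count. Write $M=\Nef(Y)\cap hO(S_Y)$ and $N=hO(S_Y)\cap\Pbar_Y$. Every element of $hO(S_Y)$ has square $h^2\ge 0$, so a nef representative lies in the interior of the positive cone (if $h^2>0$) or on a rational isotropic ray (if $h^2=0$); hence $M=\Nef^{e}_Y\cap hO(S_Y)\subseteq\Pbar_Y$. Since the kernel of $\Aut(Y)\to\aut(Y)$ acts trivially on $S_Y$, we have $M/\Aut(Y)=M/\aut(Y)$. First I would restrict the bijection $\Nef^{e}_Y/\aut(Y)\cong\Pbar_Y/G_Y$ of \Cref{thm:NefmodAut} to the orbit $hO(S_Y)$. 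As $\aut(Y)$ and $G_Y$ are subgroups of $O(S_Y)$ they preserve $hO(S_Y)$, so the assignment $[x]_{\aut(Y)}\mapsto[x]_{G_Y}$ restricts to a bijection
\[
M/\aut(Y)\;\cong\;N/G_Y,
\]
where surjectivity onto $N/G_Y$ uses the surjectivity already present in \Cref{thm:NefmodAut}.

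Next I would present $N$ as a coset space. The geometric heart is that $O(S_Y,h)\subseteq O(S_Y,\P_Y)$ for $h\in\Pbar_Y$: an isometry fixing $h$ cannot interchange the two components of $\{x^2>0\}$, since $h$ lies in the closure of exactly one of them. The same remark shows that $O(S_Y,\P_Y)$ acts transitively on $N$, for if $h_2=h_1^{g}$ with $h_1,h_2\in N$ and $g\in O(S_Y)$, then $g$ sends $h_1$ to $h_2$ without swapping the two components, whence $g\in O(S_Y,\P_Y)$; and the stabiliser of $h$ is precisely $O(S_Y,h)$. Thus $N\cong O(S_Y,h)\backslash O(S_Y,\P_Y)$ as right $O(S_Y,\P_Y)$-sets, and restricting the action to $G_Y\subseteq O(S_Y,\P_Y)$ gives $N/G_Y\cong O(S_Y,h)\backslash O(S_Y,\P_Y)/G_Y$.

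Then I would reduce modulo $2$. Let $\rho\colon O(S_Y,\P_Y)\to O(S_Y\otimes\FF_2)$ be the reduction map; its kernel is $G_0=O(S_Y,\P_Y)(2)$, which lies in $G_Y$ by \Cref{prop:G0}, so $G_Y=\rho^{-1}(\Gbar_Y)$. Because the right-hand factor $G_Y$ contains $\ker\rho$ and $\rho\bigl(O(S_Y,h)\bigr)=\overline{O(S_Y,h)}$, the rule $O(S_Y,h)\,g\,G_Y\mapsto\overline{O(S_Y,h)}\,\overline{g}\,\Gbar_Y$ is a well-defined bijection onto $\overline{O(S_Y,h)}\backslash O(S_Y\otimes\FF_2)/\Gbar_Y$; composing the three identifications proves the first claim. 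The step I expect to be the main obstacle is the surjectivity of $\rho$, which is exactly what makes the central factor downstairs the full group $O(S_Y\otimes\FF_2)$. It follows from the surjectivity of $O(E_{10})\to O(E_{10}\otimes\FF_2)$ — the reductions of the reflections generating $O(E_{10})=\{\pm 1\}\times W(E_{10})$ are the orthogonal transvections that generate $O(S_Y\otimes\FF_2)$ — together with $-\id\in\ker\rho$, which forces the index-two subgroup $O(S_Y,\P_Y)$ to surject as well.

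Finally, the counting identity is the standard partition of right cosets along double cosets. With $\overline A=\overline{O(S_Y,h)}$ and $\overline B=\Gbar_Y$, the right $\overline A$-cosets of $O(S_Y\otimes\FF_2)$ group into the double cosets $\overline A\,\overline g\,\overline B$, and the number of them contained in $\overline A\,\overline g\,\overline B$ equals $[\overline B:\overline B\cap\overline g^{-1}\overline A\,\overline g]$. Under the bijection above a representative $h'=h^{g}$ corresponds to the double coset $\overline A\,\overline g\,\overline B$ and contributes
\[
[\Gbar_Y:\Gbar_Y\cap\overline{g}^{-1}\overline{O(S_Y,h)}\,\overline{g}]=r_{h'},
\]
so summing over a set of representatives of $M/\Aut(Y)$ yields $\sum_{h'}r_{h'}=[O(S_Y\otimes\FF_2):\overline{O(S_Y,h)}]$. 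Along the way one checks that $r_{h'}$ is independent of the chosen $g$ and of the representative within its $\Aut(Y)$-orbit: replacing $g$ by a left factor from $O(S_Y,h)$ or by a right factor from $G_Y$ leaves the double coset $\overline A\,\overline g\,\overline B$ unchanged.
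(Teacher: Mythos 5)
Your proposal is correct and takes essentially the same route as the paper: the paper's (very terse) proof likewise combines \Cref{thm:NefmodAut}, the fact that $G_0=O(S_Y,\P_Y)(2)\subseteq G_Y$ (\Cref{prop:G0}) so that $\Pbar_Y/G_0\to\Pbar_Y/G_Y$ is a quotient by $\Gbar_Y$, the observation that $\Nef(Y)\cap hO(S_Y)\subseteq\Nef^{e}_Y$, and ``the usual formulas for double cosets.'' Your orbit--stabiliser presentation $hO(S_Y)\cap\Pbar_Y\cong O(S_Y,h)\backslash O(S_Y,\P_Y)$, the reduction mod $2$ via $\ker\rho\subseteq G_Y$ together with surjectivity of $O(S_Y)\to O(S_Y\otimes\FF_2)$, and the coset count are precisely what that compressed phrase unpacks to, so your write-up is a faithful (and more detailed) version of the paper's argument.
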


\begin{proof}
We use that $\Nef^e(Y)/\Aut(Y) \cong \Pbar_Y/G_Y$ and that
$\Pbar/G_0 \to \Pbar_Y/G_Y$ is a quotient by $\Gbar_Y$.
The statements follow since $\left(\Nef(Y)\cap hO(S_Y)\right)$ is a subset of $\Nef^e(Y)$, by applying the usual formulas for double cosets.
\end{proof}
\begin{remark}
For Enriques surfaces $Y$ over $\mathbb{C}$, the count of numerical $h$-quasi polarizations on $Y$ up to isomorphism is also discussed in \cite[(5.7.1)]{cdl:enriquesI}.
In our notation their formula is
$\left[O(S_Y \otimes \FF_2): \langle\overline{O(S_Y,h)}, \aut(S)\rangle\right]$.
This is in contradiction with our result and \cite[Table 8.3]{enriquesII}.
\end{remark}
\begin{remark}
 There is a unique orbit of primitive elements $e \in E_{10}$ with $e^2=0$. One can show that
 $\overline{O(E_{10},e)} \backslash O(E_{10}\otimes \FF_2) \cong \E(E_{10}\otimes \FF_2)$.
 This reconciles \Cref{thm:fiber_charp} and \Cref{thm:elliptic_fibrations}.
\end{remark}

For $h \in S_Y$ with $h^2>0$, set $\Delta(Y,h) = \{x \in \Delta(Y) \mid x.h=0, x^2=-2\}$ and let $\overline{\Delta(Y,h)}$ denote its image in $S_Y \otimes \FF_2$.
Note that the lattice $h^\perp$ is negative definite and therefore
\[\Delta(Y,h)=\{r \in h^\perp \mid r^2 = -2, \overline{r} \in \overline{\Delta}(Y)\}\]
can be computed by enumerating short vectors in $h^\perp$ and some linear algebra.
\begin{proposition}
    Let $Y$ be an Enriques surface and $h \in S_Y$ nef and big.
    Then some multiple of $h$ defines a morphism $Y \to \mathbb{P}^n$
    which contracts the set of $\left(-2\right)$-curves in $h^\perp$ to ADE singularities.
    Let $R \subseteq h^\perp$ be their $\ZZ$-span and $\Phi^+$ the set of effective roots of $R$. Then $\Phi^+ \to \overline{\Delta(Y,h)}, r\mapsto \overline{r}$ is an isomorphism of graphs.
    In particular, we can recover the ADE type of $R$.
\end{proposition}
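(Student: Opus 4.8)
The plan is to separate the statement into a geometric half (the existence of the contraction to ADE singularities) and a lattice-theoretic half (the graph isomorphism), and to reduce the latter to the injectivity of reduction modulo $2$ on a positive root system, in the spirit of \Cref{lem:injective}.

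First I would treat the contraction. Since $Y$ is an Enriques surface, $K_Y$ is numerically trivial, so by the base-point-free theorem a suitable multiple $mh$ is globally generated and defines a morphism $\phi\colon Y\to\PP^n$ onto its image, contracting exactly the curves $C$ with $h.C=0$. As $h$ is big, $h^\perp$ is negative definite, so every irreducible component $C$ of the contracted locus has $C^2<0$; by adjunction on the minimal surface $Y$ this forces $C^2=-2$ and $C$ smooth rational. A connected negative definite configuration of $(-2)$-curves is an ADE configuration contracting to a rational double point, so $\phi$ contracts these curves to ADE singularities. I let $R\subseteq h^\perp$ be their $\ZZ$-span.

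Next I would identify the root systems involved. The classes of the contracted curves lie in $\R(Y)\subseteq\Delta^+(Y)$ and, by \Cref{prop:class-smooth-rational-curve-facet} and \Cref{prop:fundamental_domain}, they are precisely the facet normals of the $\Delta(Y)$-chamber $\Nef^e_Y$ passing through $h$; by the transitivity of \Cref{lem:transitive} they therefore form a simple system for the subsystem $\Delta(Y,h)=\Delta(Y)\cap h^\perp$. This yields $\Phi^+=\Delta^+(Y,h)$ and $\Phi=\Delta(Y,h)$ equal to the full root set of $R$: the inclusion $\Phi\subseteq\Delta(Y,h)$ holds because $W(R)\subseteq W(Y)\subseteq G_Y$ preserves $\Delta(Y)$ and every root of $R$ is $W(R)$-conjugate to a simple root (a splitting root), while $\Delta(Y,h)\subseteq\Phi$ is the parabolic-subsystem statement just established. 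Since $\overline{-r}=\overline r$, the set $\overline{\Delta(Y,h)}$ is the image of $\Phi^+$, so $\Phi^+\to\overline{\Delta(Y,h)}$ is automatically surjective. For injectivity, suppose $r,r'\in\Phi^+$ with $\overline r=\overline{r'}$, i.e.\ $r-r'\in 2S_Y$. Then $\tfrac12(r-r')\in S_Y\cap(R\otimes\QQ)=R'$, the primitive closure, which is even as a sublattice of $S_Y\cong E_{10}$; hence $8\mid(r-r')^2=-4-2\,r.r'$, forcing $r.r'\equiv-2\pmod 4$. As $r.r'\in\{-2,-1,0,1,2\}$ for roots of a negative definite lattice, either $r.r'=2$, giving the impossible $r'=-r$ for effective roots, or $r.r'=-2$, giving $(r-r')^2=0$ and thus $r=r'$.

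Finally, reduction modulo $2$ carries the bilinear form to its $\FF_2$-reduction, so $r.r'\equiv 1\pmod 2$ if and only if $\overline r.\overline{r'}=1$; the adjacency relation defining the graphs is preserved and reflected, and a bijection respecting adjacency in both directions is a graph isomorphism. The ADE type of $R$ is then recovered by decomposing the graph into connected components and reading off vertices and edges, exactly as in the remark following \Cref{lem:injective}. The main obstacle is the identification $\Delta(Y,h)=\Phi$, namely that every splitting root orthogonal to $h$ already lies in the span of the contracted curves; this is where the chamber geometry of the nef cone and the transitivity results of \Cref{sect:cone-conj} do the real work, while the injectivity and graph compatibility are formal once it is in place.
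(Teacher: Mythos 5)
Your proof is correct, and while it shares the paper's skeleton (identify $\Phi^+$ with $\Delta^+(Y)\cap h^\perp$, then show reduction mod $2$ is injective and compatible with the edge relation), it diverges in two substantive ways. For injectivity, the paper cites \cite[Lemma 3.8]{brandhorst_shimada:tautaubar} to get $\Phi^+\hookrightarrow R/2R$ and then, since $R/2R\to S_Y\otimes\FF_2$ need not be injective when $R$ is not primitive in $S_Y$, adds the observation that each element of $\Phi^+$ is determined by its intersections with the simple roots $\delta_i$ mod $2$, which survive passage to $S_Y\otimes\FF_2$. You instead give a self-contained arithmetic argument: if $\overline r=\overline{r'}$ then $\tfrac12(r-r')\in S_Y$, so evenness of $S_Y$ forces $8\mid(r-r')^2=-4-2\,r.r'$, and Cauchy--Schwarz in the negative definite lattice leaves only $r.r'=\pm2$, i.e.\ $r'=r$ or the impossible $r'=-r$; this is a clean elementary replacement that reproves the cited lemma's content in exactly the needed generality and sidesteps the $R/2R$ subtlety altogether. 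For surjectivity, the paper asserts $\Phi^+=\Delta^+(Y)\cap h^\perp$ in one sentence, whereas you actually prove both inclusions: the contracted curves are the facet normals of $\Nef^e_Y$ through $h$ (\Cref{prop:class-smooth-rational-curve-facet}, \Cref{prop:fundamental_domain}), they form a simple system for $\Delta(Y,h)$ by the local chamber analysis behind \Cref{lem:transitive}, every root of $R$ is $W(R)$-conjugate to a simple root and $W(R)\subseteq W(Y)$ preserves $\Delta(Y)$, giving $\Phi\subseteq\Delta(Y,h)$, and conversely every element of $\Delta(Y,h)$ is $W(\Delta(Y,h))$-conjugate into the simple system, giving $\Delta(Y,h)\subseteq\Phi$. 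The paper's proof is shorter by leaning on the external lemma and on assertion; yours is longer but self-contained, and it also supplies the geometric contraction statement (via semiampleness of nef and big classes and Artin's contraction of negative definite $(-2)$-configurations) which the paper's proof does not address at all.
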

\begin{proof}
By \cite[Lemma 3.8]{brandhorst_shimada:tautaubar} $\Phi^+ \to R/2R$ is injective. Let $\delta_1,\dots \delta_n$ be the fundamental root system of $\Phi^+$, which forms a basis of $R$. In particular, each element of $\Phi^+$ is uniquely determined by its intersections with the $\delta_i$ mod $2$. These are preserved under $
\Phi^+ \to S_Y\otimes \FF_2$. Hence this map is injective.

For surjectivity note that $\Delta(Y,h)$ is the set of splitting roots whose hyperplanes cut out the (minimal) face of the effective nef cone of $Y$ containing $h$. Then we have $\Phi^+=\Delta(Y, h)^+:=\Delta^+(Y)\cap h^\perp$
\end{proof}

\section{Applications to \texorpdfstring{\((\tau,\taubar)\)}{(tau,taubar)}-generic Enriques surfaces} \label{sect:tau-taubar}
We slightly generalize the definition of a $(\tau,\taubar)$-generic Enriques surface in \cite{brandhorst_shimada:tautaubar} and extend it to any characteristic not two.

Let $R \subseteq E_{10}$ be a negative definite root lattice.
Then the primitive closure $\overline{R} \subseteq E_{10}$ is a negative definite root lattice as well and the $O(E_{10})$ orbit of $R$ is uniquely determined by the root types
$(\tau(R),\tau(\overline{R}))$. There are precisely $184$ orbits of negative definite root sublattices of $E_{10}$ \cite[Proposition 1.2]{brandhorst_shimada:tautaubar}.

Let $R_2$ be a copy of $R=:R_1$. Define
$M_R$ to be the lattice in the quadratic space $V = (E_{10}\oplus R_2)(2) \otimes \QQ$ generated by $E_{10}$ and $\{(v,v)/2 \in V \mid v \in R\}$. It comes with a natural injection $\omega_R \colon E_{10} \to M_R$ which rescales the inner products by $2$.

\begin{definition}\label{def:tautaubar-generic}
An Enriques surface $Y$ over an algebraically closed field of characteristic not $2$ is said to be $(\tau,\taubar)$-generic if
there exists a root sublattice $R \subseteq E_{10}$ such that $(\tau,\taubar)= (\tau(R),\tau(\overline{R}))$ and the following hold:
\begin{enumerate}
\item there exist isometries $g \colon E_{10} \to  S_Y$
and $\tilde{g} \colon M_R \to S_X$ such that the following diagram commutes.
\begin{equation*} %\label{eq:generic-diagram}
\begin{tikzcd}
   E_{10} \arrow[r]{}{\omega_R}\arrow[d]{}{g} &M_R \arrow[d]{}{\tilde{g}}\\
 S_Y\arrow[r]{}{\pi^*} &S_X
\end{tikzcd}
\end{equation*}
\item $G_{X|Y}$ is generated by $G_{X|Y}^0$ and the covering involution $\epsilon$.
\end{enumerate}
\end{definition}

\begin{remark}
In \cite{brandhorst_shimada:tautaubar}, the definition of a $(\tau,\taubar)$-generic complex Enriques surface is slightly more restrictive. There it is required that the group of integral Hodge isometries of the transcendental lattice $O(T_X, \omega_X)=\{\pm 1\}$. This implies our condition (2) in \Cref{def:tautaubar-generic}.
Over $\CC$, the $(\tau,\taubar)$-generic Enriques surfaces have $10-\rk R$ moduli.
\end{remark}
Let $Y$ be a $(\tau,\taubar)$-generic Enriques surface. To ease notation,
we identify $S_X$ and $M_R$ via $\tilde{g}$ and $S_Y$ and $E_{10}$ via $g$.
We have $R_1 \subseteq S_Y$ and $R_2(2)\subseteq S_-$.
Let $\pi_-\colon S_X \to S_-$ be the orthogonal projection. By \cite[Lemma 3.5 (1)]{brandhorst_shimada:tautaubar}, $\pi_-(2S_X)=R_2(2)$ and $S_-$ is the primitive closure of $R_2(2)$ in $S_X$.
\begin{lemma}\label{lemma:tautaubar_delta_and_GbarY}
Let $Y$ be a $(\tau,\taubar)$-generic Enriques surface and $R \subseteq S_Y$ a root lattice of type
$(\tau,\taubar)$ fitting in the commutative diagram from \Cref{def:tautaubar-generic}.
Let $\Phi(R)$ be the set of roots of $R$. Then
\begin{enumerate}
    \item $\overline{\Delta}(Y)$ is the image $\overline{\Phi(R)}$ of $\Phi(R)$ under $S_Y \to S_Y \otimes \FF_2$;
    \item $\Phi(R)/\{\pm 1\}$ injects into $S_Y \otimes \FF_2$;
    \item $\Gbar_Y$ is the image of $W(R)$ in $O(S_Y\otimes \FF_2).$
\end{enumerate}
\end{lemma}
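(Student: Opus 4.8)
The plan is to establish the three assertions in order, obtaining (3) from (1) and (2) together with condition (2) of $(\tau,\taubar)$-genericity. For (1), I would compute the splitting roots explicitly through \Cref{lem:splitting-root}. Under the identifications $S_Y = E_{10}$ and $S_X = M_R$, a root $r$ splits if and only if there is $v \in S_-$ with $v^2 = -4$ and $(\omega_R(r)+v)/2 \in M_R$. Since $(\omega_R(r)+v)/2 \in S_X$ forces $\omega_R(r)+v \in 2S_X$, applying the orthogonal projection $\pi_-$ and \cite[Lemma 3.5(1)]{brandhorst_shimada:tautaubar} (which gives $\pi_-(2S_X) = R_2(2)$) yields $v \in R_2(2)$, so $v = w$ for a root $w \in R_2 = R$. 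Writing $(\omega_R(r)+w)/2 = \left(\tfrac12(r-w),0\right) + (w,w)/2$ in $V$ and using that $(w,w)/2 \in M_R$ while $S_+$ is primitive in $S_X$, membership in $M_R$ becomes equivalent to $\tfrac12(r-w) \in E_{10}$, i.e. $\overline r = \overline w$. This proves $\overline\Delta(Y) \subseteq \overline{\Phi(R)}$; the reverse inclusion is the special case $r = w$, which exhibits every root of $R$ as a splitting root.

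For (2), since $-w \equiv w \bmod 2$ the assertion is that $\Phi^+(R) \to S_Y \otimes \FF_2$ is injective, and I would factor this through the primitive closure. One has $\Phi^+(R) \subseteq \Phi^+(\overline R)$, and \cite[Lemma 3.8]{brandhorst_shimada:tautaubar} (as already used in \Cref{lem:injective}) gives injectivity of $\Phi^+(\overline R) \to \overline R \otimes \FF_2$; finally $\overline R \otimes \FF_2 \to S_Y \otimes \FF_2$ is injective because $\overline R$ is primitive in $S_Y$, so $\overline R \cap 2 S_Y = 2\overline R$.

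For (3), one inclusion is immediate: roots of $R$ split by (1), so $W(R) \subseteq W(Y) \subseteq G_Y$. Moreover the mod-$2$ reduction of $s_r$ depends only on $\overline r$, so (1) already shows that the image of $W(Y)$ in $O(S_Y\otimes\FF_2)$ coincides with the image of $W(R)$. It therefore remains to prove that $\Gbar_Y$ contributes nothing more, i.e. that the image of $\aut(Y)$ lies in $\overline{W(R)}$. Here I would invoke condition (2) of \Cref{def:tautaubar-generic}: since $G_{X|Y} = \langle G_{X|Y}^0, \epsilon\rangle$ and $\epsilon^*$ restricts to the identity on $S_+ \cong S_Y$, \Cref{prop:GYG_+} gives $G_Y = \im\bigl(G_{X|Y}^0 \to O(S_Y)\bigr)$. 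I would then study this image on $S_Y \otimes \FF_2 \cong S_+^\vee/S_+$, using that $W(Y)$ restricts on $S_-$ to the Weyl group $W(R)$ of the primitive closure of $R_2(2)$, that elements of $G_{X|Y}^0$ act trivially on $S_X^\vee/S_X$, and the resulting identification $\Gbar_+ \cong \Gbar_-/\{\pm1\}$, to force the reduction into $\overline{W(R)}$.

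I expect this last step to be the main obstacle. Controlling the full mod-$2$ image of $G_{X|Y}^0$ amounts to ruling out the Dynkin-diagram automorphisms of $R$ that are not realized inside $W(R)$, and this is precisely where the triviality on $S_X^\vee/S_X$ imposed by $G_{X|Y}^0$ and the symmetry coming from condition (2) must be exploited. By contrast, the arguments for (1) and (2) are routine once the projection and embedding identities from \cite{brandhorst_shimada:tautaubar} are available.
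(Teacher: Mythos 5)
Your parts (1) and (2) are correct and essentially the same as the paper's own proof: for (1) the identical computation using $\pi_-(2S_X)=R_2(2)$ from \cite[Lemma 3.5 (1)]{brandhorst_shimada:tautaubar} and the primitivity of $S_+$ in $S_X$, and for (2) the same factorization through the primitive closure $\overline{R}$ together with \cite[Lemma 3.8]{brandhorst_shimada:tautaubar}. Your reductions in (3) are also sound as far as they go: $\overline{W(Y)}=\overline{W(R)}$ follows from (1) since mod-$2$ reductions of reflections only depend on the class $\overline{r}$, and genericity condition (2) of \Cref{def:tautaubar-generic} together with \Cref{prop:GYG_+} gives $G_Y=\im\bigl(G^0_{X|Y}\to O(S_Y)\bigr)$, exactly as in the paper.

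However, the remaining step of (3) --- that the image of $G^0_{X|Y}$ in $O(S_Y\otimes\FF_2)$ is no larger than $\overline{W(R)}$ --- is the actual content of the lemma, and your sketch does not prove it; you acknowledge this yourself. The ingredients you list do not suffice: triviality of $G^0_{X|Y}$ on $S_X^\vee/S_X$ constrains the action on the glue group and on $D_-^\perp$, but by itself says nothing about whether the induced action on $S_+^\vee/S_+\cong S_Y\otimes\FF_2$ lands in $\overline{W(R)}$; and the identification $\Gbar_+\cong\Gbar_-/\{\pm1\}$ you invoke is stated in the paper only under the stronger complex hypothesis $O(T_X,\omega_X)=\{\pm1\}$, not under condition (2) of \Cref{def:tautaubar-generic}, so using it here is not justified. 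The paper closes the gap differently: it transports the whole problem to the negative definite side via the commutative diagram \cite[equation (3.4)]{brandhorst_shimada:tautaubar}, which gives isomorphisms $\Gbar_Y\cong\Gbar^0_+\cong\Gbar^0_{X|Y}\cong\Gbar^0_-$, and then uses the decisive input from the proof of \cite[Theorem 3.11]{brandhorst_shimada:tautaubar} that $G^0_-=W(R_2)$, i.e.\ that the kernel $\ker\bigl(O(S_-,D_-)\to O(D_-^\perp)\bigr)$ of \cref{eqn:Dminus-perp} is \emph{exactly} the Weyl group of $R_2$. This concrete lattice-theoretic computation on $S_-$ is precisely what rules out the extra diagram automorphisms you worry about; without it, or some substitute for it, nothing on the $S_+$ side pins down $\overline{\aut(Y)}$, and your argument cannot be completed.
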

\begin{proof}\hfill
%To ease notation we may and will assume that $g$ and $\tilde{g}$ are the identity.
\begin{enumerate}
    \item By the definitions the elements $v \in \Phi(R)$ are splitting roots, indeed $(v,v)/2 \in S_X\cong M_R$.
    Conversely if $s \in S_Y$ is a splitting root, then we find $v \in 2\pi_-(S_X)=R_2(2)$ with $v^2=-4$ and $(\pi^*(s),v)/2 \in S_X$.
    Thus $((\pi^*(s)-v)/2,0)=(\pi^*(s),v)/2 - (v,v)/2 \in S_X \cap S_+^\vee=S_+=\pi^*(S_Y)$. This shows that $\overline{s}=\overline{v} \in \overline{\Phi(R)}$.
    \item This follows from \cite[Lemma 3.8]{brandhorst_shimada:tautaubar} since $\overline{R}$ is a negative definite root lattice and $\overline{R} \otimes \FF_2 \to S_Y \otimes \FF_2$ is injective.

    \item To warm up note the following: since by (1) the roots of $R$ all split, $W(R) \subseteq G_Y$.
    The other inclusion is harder.
    Let $\Gbar_{X|Y}$ denote the image of $G_{X|Y}^0$ in the orthogonal group of the discriminant group of $S_+ \oplus S_-$, and denote correspondingly $G_{\pm}^0$, respectively $\Gbar_{\pm}^0$, as the image of $G_{X|Y}^0$ in $O(S_\pm)$, respectively $O(S_\pm^\vee/S_\pm)$.
    By the genericity assumption (2) in \Cref{def:tautaubar-generic} and since the covering involution $\epsilon$ acts trivially on $S_+$, $G_+ = G_+^0$.
    Recall the commutative diagram found in \cite[equation (3.4)]{brandhorst_shimada:tautaubar} (adapted to our slightly different notation)
    \[
    \begin{tikzcd}
        G_Y \cong G_{+}^0 \arrow[d] &G_{X|Y}^0 \arrow[l]\arrow[r]\arrow[d] & G_{-}^0 \arrow[d]\\
        \Gbar_Y \cong \Gbar_{+}^0  &\Gbar_{X|Y}^0\arrow[l,"\sim"'] \arrow[r]{}{\sim} & \Gbar_{-}^0
    \end{tikzcd}.
    \]
    Set \(\Phi=\{((r,r)/2 \in S_X \mid r \in \Phi_+\}\) where
    $\Phi_+$ is the set of roots of $R_1\subseteq E_{10}$ and $\Phi_-$ the ones of $R_2$.
    Then the above diagram restricts to
    \[
    \begin{tikzcd}
        W(\Phi_+) \arrow[d] &W(\Phi) \arrow[l]\arrow[r]\arrow[d] & W(\Phi_-) \arrow[d]\\
        \Gbar_Y\cong\Gbar_{+}^0  &\Gbar_{X|Y}^0\arrow[l,"\sim"'] \arrow[r]{}{\sim} & \Gbar_{-}^0
    \end{tikzcd}.
    \]
    By the proof of \cite[Theorem 3.11]{brandhorst_shimada:tautaubar}, $G_{-}^0 = W(R_2)=W(\Phi_-)$. Now the diagram yields the statement.
\end{enumerate}
\end{proof}

\begin{example}
Let $Y$ be an $(A_1,A_1)$-generic Enriques surface.
Then $S_- \cong A_1(2)$, $\overline{\Delta}(Y)$ consists of a single point, say $\overline{r}$. Further $\Gbar_Y \cong \Gbar_- \cong W(A_1)$ is of order $2$ and generated by the reflection in $\overline{r}$.
Let $\overline{f} \in \E(S_Y\otimes \FF_2)$.
If $\overline{f} \in \overline{r}^\perp$, then \(\overline{f}\) is fixed by $\Gbar_Y$ and there are \(2^{8}-1=255\) such vectors. The corresponding elliptic fibrations have a unique reducible singular fiber $C$. It is of type $\tilde{A_1}$ and multiplicity $1$ by \Cref{prop:multiple} since $\overline{f} \notin \langle \overline{\Delta}(C)\rangle = \FF_2 \overline{r}$.

The remaining $527-255=2 \cdot 136$ isotropic classes come in pairs of two, which is their weight. The corresponding fibrations have no reducible singular fibers.
\end{example}

\begin{example}
Let $Y$ be a \((2A_1,2A_1)\)-generic Enriques surface.
Write $E_{10} \otimes \FF_2= \bigoplus_{i=1}^5 u_i$ with $u_i$ generated by $f_i, e_i$ and $q(f_i)=q(e_i) =0$, \( f_i.e_i =1\).
Set $r_i = f_i+e_i$. Note that $q(r_i)=1$. Up to a change of basis, we may assume that $\overline{\Delta}(Y)=\{r_1,r_2\}$ and $\Gbar_Y$ is of order \(4\) and generated by the reflections in $r_1$ and $r_2$.
Write an isotropic vector as $\overline{f} = x+y$ with $x \in u_1 \oplus u_2$ and $y \in u_3 \oplus u_4 \oplus u_5$.
Note that there are $28$ elements in $u_3 \oplus u_4 \oplus u_5$ of square $1$, $35$ nonzero elements of square $0$, and the zero element, giving a total of $28+35+1=64=2^6$ as expected. We separate cases depending on $x$.
\begin{enumerate}
\item  \(x \in \{0, r_1, r_2, r_1+r_2\}\) and $y \neq 0$. There are \(126 = 35 + 28+28 + 35\) choices for $\overline{f}$ and $\Gbar_Y$ fixes $\overline{f}$. Thus $\overline{f}$ is of weight $1$.
We have $\overline{\Delta}(Y) \cap\overline{f}^\perp = \{r_1,r_2\}$. Further $r_1 +\overline{f} \neq r_2$ because $y\neq 0$.
Thus $\overline{\Delta}(Y,\overline{f})$ consists of two isolated points and the fibration corresponding to $\overline{f}$ has two fibers of type $A_1$.
Since $\overline{f}$ is not in the linear hull of $\overline{\Delta}(Y)$, the two fibers reduced.
\item \(x \in \{e_1+e_2 + \alpha r_1 + \beta r_2 \mid \alpha,\beta \in \{0,1\}\}\). There are \(4 \cdot 36\) choices for $\overline{f}$, $\#(\overline{f}\cdot\Gbar_Y)= 4$, $\overline{\Delta}(Y) \cap\overline{f}^\perp = \emptyset$.
In particular, the fibrations have weight $4$ and no reducible singular fibers.
\item  \(x \in \{e_i + \alpha r_1 + \beta r_2 \mid i \in \{1,2\}, \alpha,\beta \in \{0,1\}\}\).
      There are \(4\cdot 64\) choices for $\overline{f}$ and $\#(\overline{f}\cdot\Gbar_Y)=2$ in each case.
      The set $\overline{\Delta}(Y) \cap \overline{f}^\perp= \{r_i\}$ consists of a single point, so does its image $\overline{\Delta}(Y,\overline{f})$. Thus there is a single reducible fiber $C$ and it is of type $A_1$. It must be of multiplicity one because $\overline{f}$ is not contained in the linear hull of $\overline{\Psi}_C= \{r_i\}$.
\item \(\overline{f} = r_1 + r_2\). Then $\#(\overline{f}\cdot\Gbar_Y)=1$ and $\overline{\Delta}(Y) \cap \overline{f}^\perp =\{r_1,r_2\}$.
  Since $r_1 = \overline{f}+ r_2$, $\overline{\Delta}(Y,\overline{f}) = \{r_1 + \FF_2 \overline{f}\} \subseteq \overline{f}^\perp / \FF_2 \overline{f}$ consists of a single point.
  Thus there is a single reducible fiber $C$. It is of type $A_1$. Moreover it is multiple because
  $\overline{f}$ is in the linear hull of $\overline{\Delta}(C) = \{r_1, r_2\}$.
\end{enumerate}
As a sanity check we calculate that \(126 + 4 \cdot 36 + 4 \cdot 64 + 1 = 527.\)
\end{example}

For the next example we recall some notation from \cite[Section 3]{brandhorst_shimada:tautaubar}.
Let $D_\pm \subseteq S^\vee_\pm/S_\pm$
be the image of $S_X/(S_+
\oplus S_-)$ under the orthogonal projection.
The latter is the graph of the gluing anti-isometry $D_+ \cong D_-$. Set $D_-^\perp$ as the orthogonal of $D_-$ in $S_-^\vee/S_-$.
By \cite[Theorem 3.4]{brandhorst_shimada:tautaubar} we have
\begin{equation}\label{eqn:Dminus-perp}
    G_-^0 = \ker \left(O(S_-,D_-) \to O(D_-^\perp)\right).
\end{equation}
\begin{example}
Let $Y$ be a general Enriques surface such that $S_- \cong [-4m]$ for $m \geq 1$.
The case $m=1$ corresponds to $1$-nodal i.e. $(A_1,A_1)$-generic Enriques surfaces.
Let $m \geq 1$. Then $S_-$ does not contain any vectors of square $4$ and therefore  \(\Delta(Y)\) is empty as well and the effective nef cone coincides with $\Pbar_Y$. We claim that $\Gbar_Y = 1$.
To see this let $e$ be a generator of $S_-$ with $e^2 = -4m$. Then $D_- = 1/2 e$ and $D_-^\perp = 1/(2m) e$.
Now \cref{eqn:Dminus-perp} tells us that
elements of \(G_-\) act trivially on \(D_-^\perp\).
The only non trivial isometry of $S_-$ is $-1$.
But it does not act trivially on $D_-^\perp$ for $m>1$.
The claim $\Gbar_Y =1$ is proven.
Therefore $G_Y =G_0$ and $\Aut(Y) = G_0$ is the $2$-congruence subgroup of $O(S_Y,\P_Y)$.
There are precisely $527$-elliptic fibrations on $Y$ up to automorphisms.
\end{example}

\section{Moduli spaces of complex Enriques surfaces} \label{sect:moduli-general}
In this section we review the construction of moduli spaces of numerically quasi-polarized complex Enriques surfaces \cite{gritsenko-hulek} and extend it to cover the case of nef but not big divisors as well. This allows us to define moduli spaces of elliptic Enriques surfaces. As remarked in \cite[Remark 2.2]{gritsenko-hulek} it is not clear whether the spaces we construct represent any moduli-functor. But at least their points are in bijection with isomorphism classes of numerically quasi-polarized complex Enriques surfaces (see \cref{thm:moduli}).
\subsection*{Polarizations.}
A numerically quasi-polarized Enriques surface is a pair $(Y,h)$ consisting of an Enriques surface and a nef class $h \in S_Y$.
If $h$ happens to be ample, this is called a numerical polarization.
Note that this includes the case $h^2=0$ and $h=0$, which are usually excluded in the literature.
%If $H=0$ we omit it from notation and call $Y$ an unpolarized Enriques surface.

The notion of isomorphism of numerically quasi-polarized Enriques surfaces is the obvious one:
\begin{definition}
    Two numerically quasi-polarized Enriques surfaces $(Y_1,h_1)$ and $(Y_2,h_2)$  are said to be isomorphic if there is an isomorphism $\psi\colon Y_1 \to Y_2$ with $\psi^*(h_2) = h_1$.
\end{definition}
Let $Y$ be an Enriques surface with covering $K3$ surface $\pi\colon X\to Y$.
A \emph{marking} of $Y$ is an isometry $\varphi\colon S_Y\xrightarrow{\sim} E_{10}$ of the numerical lattice of the Enriques surface to the Vinberg lattice $E_{10}$.

\begin{definition}
Let $h \in E_{10}$ with $h^2\geq 0$. We call $(Y,h')$ a \emph{numerically $h$-quasi polarized} Enriques surface if for any (and hence every) marking $\varphi$ of $Y$ it holds that $\varphi(h') \in hO(E_{10})$.
\end{definition}

A marking $\varphi$ is the same as an isometry $\varphi\colon S_Y\left(2\right)\xrightarrow{\sim} E_{10}\left(2\right)$. The latter lattice has an embedding into the $K3$ lattice $\Lambda:=U^{\oplus 3}\oplus E_8^{\oplus 2}$, which is unique up to the action of \(O(\Lambda)\) \cite[Theorem 1.14.4]{nikulin}. We fix once and for all one such embedding and denote by $M\subseteq \Lambda$ the image and by $N=M^{\perp}\subseteq\Lambda$.
Recall that the pull-back by $\pi$ induces an isometry $S_Y(2)\cong\pi^*\left(S_Y\right)=:S_+$ with a primitive sublattice $S_+\subseteq S_X\subseteq H^2\left(X,\ZZ\right)$.
The marking $\varphi$ can be (non-uniquely) extended to a marking $\tilde{\varphi}\colon H^2\left(X,\ZZ\right)\stackrel{\cong}{\to}\Lambda$ of the covering $K3$ surface, which maps $S_- = S_+^{\perp}\subseteq H^2\left(X,\ZZ\right)$ isometrically into $N$ and $S_+$ onto $M$.

\subsection*{Periods.}
The Enriques involution acts as $-\id$ on the space of global holomorphic $2$-forms $\omega_X=H^0\left(K_X\right)\subseteq H^2\left(X,\CC\right)$, hence it is mapped to $N_{\CC}$ by the ($\CC$-linear extension of the) marking $\widetilde{\varphi}$. Moreover, the Riemann bilinear relations imply that the line $\widetilde{\varphi}_{\CC}\left(\omega_X\right)$ actually lies in
$$\Omega_N:=\left\{\left[x\right]\in\PP\left(N_{\CC}\right)\,\mid\,x^2=0, x.\overline{x}>0\right\}.$$
We call this point the \emph{period} of the unpolarized Enriques surface $Y$ with respect to the chosen $\widetilde{\varphi}$.

However not every point of $\Omega_N$ is the period of some Enriques surface with respect to some marking. Such points are precisely the complement of a certain union of hyperplanes $\mathcal{H}\subseteq\PP\left(N_{\CC}\right)$:
$$\mathcal{H}:=\bigcup_{r\in N,r^2=-2}\PP\left(r^{\perp}\right)=\bigcup_{r\in N,r^2=-2}\left\{\left[x\right]\in\PP\left(N_{\CC}\right)\,\mid\, r.x=0\right\},$$
which is obviously stable by the action of $O\left(N\right)$. Denote by
$$\Omega_N^\circ=\Omega_N\setminus\mathcal{H}$$
the set of actual periods.

\begin{definition}
    The \emph{period} of the numerically $h$-quasi polarized Enriques surface $(Y,h')$ with respect to $\varphi$ and its extension $\widetilde{\varphi}$ is
    \begin{equation} \label{eq:period}
    \left(\varphi\left(H\right),\widetilde{\varphi}_{\CC}\left(H^{2,0}(X)\right)\right)\in hO(E_{10})\times\Omega_N^\circ.
    \end{equation}
\end{definition}

\subsection*{The moduli spaces.}
Now we need to find a suitable group $G$ acting on $hO(E_{10})\times\Omega_N^\circ$ by identifying two periods if and only if the quasi-polarized Enriques surfaces are isomorphic.
It turns out to be the same as eliminating the dependence on the markings $\varphi$ and $\widetilde{\varphi}$.

\begin{definition}
    Let $G=O\left(\Lambda,M\right)$ be the subgroup of $O\left(\Lambda\right)$ preserving the chosen sublattice $E_{10}\left(2\right)\cong M\subseteq\Lambda$ (or equivalently its orthogonal complement $N=M^{\perp}$).
\end{definition}

Since $M\oplus N\subseteq\Lambda$ is a sublattice of finite index, any isometry in $G$ is uniquely determined by its restrictions $g$ resp. $h$ to $M$ resp. $N$, which satisfy a compatibility condition (define the same action in the discriminant groups $M^\vee/M \cong N^\vee/ N$). In particular we can identify $G$ with a subgroup of $O\left(E_{10}\right)\times O\left(N\right)$. Thus we also have an action of $G$ on our enriched period domain $hO(E_{10})\times\Omega_N^\circ$.
\begin{theorem}\label{thm:moduli}
    Let $h \in E_{10}$ with $h^2\geq 0$.
    There is a natural bijection between the set of isomorphism classes of numerically $h$-quasi-polarized Enriques surfaces %\victor{(decide whether $O(M)h$ or $O(E_{10})h$ and be consistent)}\simon{decided on $E_{10}$ because that works better in positive characteristic}
    and the quotient
    \[\M_{En,h}:=  \left(hO(E_{10}) \times \Omega_N^\circ \right)/O\left(\Lambda,M\right)\]
    induced by the period map.
    We call $\M_{En,h}$ the moduli space of numerically $h$-quasi-polarized Enriques surfaces.
\end{theorem}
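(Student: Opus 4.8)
The plan is to exhibit the claimed bijection as the period map and to verify it is well-defined, injective, and surjective, reducing injectivity and surjectivity to the strong Torelli theorem on the covering K3 surfaces while using the nef-cone machinery of \Cref{sect:nef-cone,sect:cone-conj} to carry the polarization along. For well-definedness I would first observe that, given $(Y,h')$, any two extended markings $\widetilde{\varphi},\widetilde{\varphi}'$ differ by the isometry $\widetilde{\varphi}'\circ\widetilde{\varphi}^{-1}\in O(\Lambda)$; since both send $S_+=\pi^*(S_Y)$ onto $M$, this isometry preserves $M$, hence lies in $G=O(\Lambda,M)$ and carries one period to the other. Thus the class in $\left(hO(E_{10})\times\Omega_N^\circ\right)/G$ is independent of all choices. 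It lands in the correct set because $\varphi(H)\in hO(E_{10})$ by the definition of an $h$-quasi-polarization, while $\widetilde{\varphi}_\CC(H^{2,0}(X))\in\Omega_N^\circ$ by the Riemann relations together with $\epsilon^*\omega_X=-\omega_X$.

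\textbf{Injectivity.} Suppose $(Y_1,h_1')$ and $(Y_2,h_2')$ have periods identified by some $g\in G$. Transporting $g$ through the markings yields an isometry $\Psi\colon H^2(X_1,\ZZ)\to H^2(X_2,\ZZ)$; it is a Hodge isometry since $g$ matches the periods, and it commutes with the two covering involutions because $g$ preserves both $M$ and $N$. Its restriction to $S_+$ is an isometry $\psi_S\colon S_{Y_1}\to S_{Y_2}$ with $\psi_S(h_1')=h_2'$. As both $h_i'$ are nef, this forces $\psi_S(\P_{Y_1})=\P_{Y_2}$ (for $h=0$ one first composes with $-\id_\Lambda\in G$). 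Now $\psi_S$ need not respect the nef cones, but $\psi_S(\Nef^{e}_{Y_1})$ is a $\Delta(Y_2)$-chamber whose closure contains $h_2'$, so by \Cref{lem:transitive} there is $w\in W(h_2')$ with $w\cdot\psi_S(\Nef^{e}_{Y_1})=\Nef^{e}_{Y_2}$, and $w$ fixes $h_2'$ as it is a product of reflections in roots orthogonal to $h_2'$. Being a product of reflections in split roots, $w$ lifts by \Cref{remark:split_weyl} to an element $\widetilde w\in W(X_2,\epsilon_2)$ commuting with $\epsilon_2^*$, so $\widetilde w\circ\Psi$ is a Hodge isometry commuting with the involutions. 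It is effective: the pull-back of an ample class of $Y_1$ lies in $S_+$ and is sent to the pull-back of an ample class of $Y_2$, hence to an ample class of $X_2$ by \Cref{lem:ampleY}. The strong Torelli theorem \cite[VII (11.1)]{bhpv:compact-complex-surfaces} then produces an isomorphism $\theta\colon X_2\to X_1$ with $\theta^*=\widetilde w\circ\Psi$; it commutes with the involutions and descends to $\bar\theta\colon Y_2\to Y_1$ with $\bar\theta^*(h_1')=w\psi_S(h_1')=h_2'$, giving the required isomorphism of $h$-quasi-polarized surfaces.

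\textbf{Surjectivity.} Starting from a class $[(\alpha,[\omega])]$ with $\alpha\in hO(E_{10})$ and $[\omega]\in\Omega_N^\circ$, the classical surjectivity of the period map for complex Enriques surfaces (cf.\ \cite{gritsenko-hulek}) yields an Enriques surface $Y$ with a marking $\varphi$ realizing the period $[\omega]$. Set $h'':=\varphi^{-1}(\alpha)$; then $(h'')^2=h^2\ge 0$ and, after applying $-\id_\Lambda\in G$ if necessary, $h''\in\Pbar_Y$. Since $\Nef^{e}_Y$ is a fundamental domain for $W(Y)$ on $\Pbar_Y$ (\Cref{prop:fundamental_domain}), some $w\in W(Y)$ moves $h''$ into $\Nef^{e}_Y$, and $h':=w(h'')$ is nef with $\varphi(h')\in hO(E_{10})$, so $(Y,h')$ is numerically $h$-quasi-polarized. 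As $w$ lifts to an element of $W(X,\epsilon)$ acting trivially on the transcendental lattice $T_X$, this lift lies in $G$ and fixes $[\omega]$; hence the period of $(Y,h')$ is $G$-equivalent to $(\alpha,[\omega])$.

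\textbf{Main obstacle.} I expect the crux to be the injectivity step, where the correcting isometry must simultaneously (i) fix the polarization $h_2'$, (ii) commute with the covering involution so as to descend to the Enriques surfaces, and (iii) make $\Psi$ effective on the K3 cover so that Torelli applies. The device reconciling all three is to correct only by the subgroup $W(h_2')$ of reflections in split roots orthogonal to $h_2'$ — which automatically preserves the polarization and lifts into $W(X_2,\epsilon_2)$ — together with the observation that effectivity can be tested on the pull-back of an ample class of $Y$, a class living in $S_+$ where the chamber geometry of \Cref{sect:cone-conj} is directly visible.
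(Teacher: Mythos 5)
Your proposal is correct and follows essentially the same route as the paper's proof: the identical period map, injectivity via the strong Torelli theorem after correcting the transported Hodge isometry by an element of $W(h_2')$ (\Cref{lem:transitive}) lifted to the K3 cover, and surjectivity via classical surjectivity of the Enriques period map followed by a Weyl-group correction (\Cref{prop:fundamental_domain}). The only step you leave implicit — that $\psi_S(\Nef^{e}_{Y_1})$ is (the closure of) a $\Delta(Y_2)$-chamber — is exactly where the paper invokes the lattice-theoretic characterization of splitting roots (\Cref{lem:splitting-root}); this is immediate in your setup since $\Psi$ preserves $S_{X_i}$ and $(S_{X_i})_\pm$, so the omission is harmless.
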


\begin{proof}
    Let $(Y,h')$ be a numerically $h$-quasi polarized Enriques surface. We first show that the $G$-orbit of its period is independent of the choices of a marking $\varphi$ and its extension $\widetilde{\varphi}$. Indeed, any two extensions of $\varphi$ differ by an element of $\{\id_M\} \times O^\sharp(N) \subseteq O(\Lambda, M)=G$ and any two choices of markings differ by an element of $O(E_{10})$. Since $G \to O(M)=O(E_{10})$ is surjective, this leads to the same $G$-orbit.

    Now we show that the map from the set of isomorphism classes of numerically $h$-quasi polarized Enriques surfaces to $\M_{En,h}$ which maps $(Y,h')$ to the $G$-orbit of its period is injective.
    Suppose that $(Y_1,h_1')$ and $\left(Y_2,h_2'\right)$ have the same period, that is, we can find markings $\varphi_i$ of $Y_i$, extensions $\widetilde{\varphi_i}$ to markings of $X_i$
    for $i=1,2$ and $\widetilde{g}=(g_1,g_2) \in G\subset O(M)\times O(N)$ with $\widetilde{\varphi}_2(h_2)^{\widetilde{g}} = \widetilde{\varphi}_1(h_1)$ and $ \widetilde{\varphi}_2(\omega_2)^{\widetilde{g}} = \widetilde{\varphi}_1(\omega_1)$ where $h_i' = \pi_i^*h_i$ and $\omega_i=H^{2,0}(X_i)$.
   Let
    $$k:=\widetilde{\varphi}_1^{-1} \circ \widetilde{g}\circ \widetilde{\varphi}_2\colon H^2\left(X_2,\ZZ\right)\stackrel{\cong}{\to} H^2\left(X_1,\ZZ\right).$$
    After replacing $\tilde g$ by \((-g,h)\) if necessary, we can assume that $k$ maps the positive cone of $X_2$ to the positive cone of $X_1$. By construction $k$ maps $\omega_2$ to $\omega_1$ and therefore
    $S_{X_2}$ to $S_{X_1}$. Since $\widetilde{g}$ preserves $M$, $k$ maps $(S_{X_2})_\pm$ to $(S_{X_1})_\pm$. In view of the purely lattice theoretic description of the set of splitting roots in \Cref{lem:splitting-root}, this implies $k(\Delta(Y_2))=\Delta(Y_1)$. Therefore
    the image of the ample cone of $Y_2$ is a $\Delta(Y_1)$-chamber of the positive cone of $Y_1$.
    By \Cref{lem:transitive} we find an
    element $\alpha$ of the Weyl group
    $W(h_2) \subseteq W(Y_2)$ such that
    $\varphi_1^{-1} \circ g \circ \varphi_2 \circ \alpha$ maps the ample cone of $Y_2$ to the ample cone of $Y_1$. By \Cref{prop:GYG_+} the map $G_{X_2|Y_2} \to G_{Y_2} \supseteq W(Y_2)$ is surjective. Therefore we can lift $\alpha$ to a Hodge isometry $\tilde{\alpha}$ of $H^2(X_2,\ZZ)$ preserving $(S_{X_2})_{\pm}$.
    By construction $k\circ \tilde{\alpha}$ is an effective Hodge isometry. By the strong Torelli theorem there is an isomorphism $\widetilde{\psi}\colon X_1 \to X_2$ with $\widetilde{\psi}^* = k \circ \tilde{\alpha}$. Since $\widetilde{\psi}^*$ maps $\left(S_{X_2}\right)_+$ to $\left(S_{X_1}\right)_+$, $\psi$ is compatible with the covering involutions, hence it descends to an isomorphism $\psi \colon Y_1 \to Y_2$ of the Enriques surfaces. By construction
    $$\psi^*(h_2)=k\left(\tilde{\alpha}\left(h_2\right)\right)=k\left(h_2\right)=\left(\widetilde{\varphi}_1^{-1} \circ \widetilde{g}\circ \widetilde{\varphi}_2\right)\left(h_2\right)=h_1.$$
     This shows $\left(Y_1,h_1\right)\sim(Y_2,h_2)$.

    For the surjectivity: Let $\left(h',v\right)\in O(M)h\times\Omega_N^\circ$. Then there is a marked Enriques surface $\left(Y,\varphi\right)$ with
    $$v=\left(\widetilde{\varphi}\right)_{\CC}\left(H^{2,0}(X)\right),$$
    and (up to replacing $\varphi$ by $-\varphi$),
    $\varphi^{-1}\left(h'\right)\in S_Y$ is a class in the positive cone, but not necessarily nef.
    It is however contained in the closure of some $\Delta(Y)$-chamber $C$. By \Cref{prop:fundamental_domain}, we find an element $\alpha$ of the Weyl group $W(Y)$ with $\alpha(C)$ the ample cone of $Y$. Set $\varphi' := \varphi \circ \alpha^{-1}$. Then $h''=\varphi'^{-1}(h')=\alpha\left(\varphi^{-1}\left(h'\right)\right)$ is nef, so $(Y,h'')$ is a numerically $h$-quasi-polarized Enriques surface with period $(h',v)$ via the markings $\varphi'$ and $\widetilde{\varphi}'=\widetilde{\varphi}\circ\widetilde{\alpha}$ for a suitable extension of $\alpha$ to $H^2(X,\ZZ)$.

    %Let $\phi'$ be the corresponding elliptic fibration from \Cref{prop:class-ell-fibr}. Then $(Y,\phi',\varphi')$ is the desired preimage.
\end{proof}

\begin{remark}
Our construction can be reconciled with the conventional construction of the moduli spaces $\M_{En,h}$ in \cite{gritsenko-hulek} as follows.
First of all one fixes $h \in E_{10}$ and a connected component $\D_N$ of $\Omega_N^\circ$.
Denote by $\Gamma_h$ the image of $O(\Lambda,M,h) \to O(N)$.
For each orbit $(h',\omega)G$ choose $g \in G$ with $(h')^g=h$ and $\omega^g \in \D_N$. It is well defined up to the action of the joint stabiliser $O(\Lambda, M, h, \D_N)$ of $h$ and $\D_N$ in $G$. Let $\Gamma_h^+$ be its image in $O(N)$. This gives isomorphisms
\begin{equation}
    \M_{En,h}:=(O(E_{10})h \times \Omega_N^\circ)/G \cong \Omega_N^\circ/\Gamma_h \cong  \D_N/\Gamma_h^+.
\end{equation}
Since $O^\sharp(N,\D_N) \subseteq \Gamma_h$, the latter is an arithmetic subgroup of $O(N,\D_N)$ and therefore the quotient
$\D_N/ \Gamma_h$ a $10$-dimensional quasi projective normal variety \cite{baily-borel} -- even for $h^2=0$ or $h=0$.  %The moduli space $\M_{En,h}^\circ$ is an open subset of $\M_{En,h} $.
\end{remark}
\section{Forgetful maps between moduli spaces of Enriques surfaces}\label{sect:moduli-fibrations}
Let $v \in E_{10}$ be a Weyl vector as defined in \Cref{sec:vinberg}. Recall that its stabiliser in $O(E_{10})$ is trivial. Therefore $O(\Lambda,M,v)= \{\id_M\}\times O^\sharp(N)$ and thus $\Gamma_v = O^\sharp(N)$.
Fix $h \in E_{10}$ with $h^2\geq 0$. Considering the groups
\[O^\#(N)=\Gamma_w \subseteq \Gamma_h \subseteq \Gamma_0=O(N),\]
we see that there are
finite ramified coverings of normal quasi-projective varieties:
\begin{equation}
\M_{En,v} \xrightarrow{\P^v_h}\M_{En,h} \xrightarrow{\P^h_0}{\to} \M_{En,0}
\end{equation}
The definition of $\Gamma_h$ depends on the choice of $h$ in its $O(E_{10})$-orbit. Different choices give conjugate groups and thus different maps $\P^v_h$ and $\P^h_0$. They fit in the obvious commutative diagram. See \cref{rmk:modular_w_h} for the modular interpretation of $\P^w_h$.

The map $\P^h_0$ has the modular interpretation of sending a (family of) numerically $h$-quasi-polarized Enriques surfaces $(Y,h')$
to the unpolarized (family) $(Y,0)$.  With the identification $\M_{En,0}=(\{0\} \times \Omega_N^\circ)/G \cong \Omega_N^\circ/O(N)=: \M_{En}$ we have
\[\P^h_0\colon \M_{En,h} \to \M_{En}, \quad (h'',\omega)G \mapsto O(N)\omega.\]

We are interested in the degree, the ramification locus and the ramification degrees of the map $\P^h_0$.

A ramified cover is a finite, surjective, proper holomorphic map $\pi \colon X
\to Y$ between connected, \emph{normal} complex spaces.
By the discussion in \cite[Chapter I Section 16]{bhpv:compact-complex-surfaces}
any point in $x \in X$ has a local degree $e_x$ also called ramification degree of $\pi$ at $x$.

 \begin{lemma}\label{lem:quotients}
 Let $X$ be a normal complex space and $H \leq G$ finite groups acting faithfully on $X$ such that the canonical projections
\[X \xrightarrow{\alpha} X/H \xrightarrow{\beta}  X/G\]
are morphisms of quasi-projective varieties.
Let $x \in X$.
\begin{enumerate}
    \item The degrees are $\deg \alpha = \#H$, $\deg \beta= [G:H]$ and $\deg (\beta\circ\alpha) = \#G$
    \item The (reduced) fiber over $Gx$ is in bijection with the double coset
\[\beta^{-1}(Gx) \cong  x \stab(G,x) \backslash G /H . \]
\item The ramification degrees of $\alpha$, $\beta\circ\alpha$ at $x$ and $\beta$ at $\alpha(x)$
are given by
%\[\left[\stab(G,x): \stab(G,x) \cap g H g^{-1}\right]\]
\[e_{x}(\alpha) = \#\stab(H,x), \quad e_x(\beta\circ\alpha) = \# \stab(G,x)\]
\[e_{\alpha(x)}(\beta) = \left[\stab(G,x): \stab(H,x)\right].\]
 \end{enumerate}
 \end{lemma}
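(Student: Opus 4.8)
The plan is to prove the three parts in order, exploiting the fact that all three maps are quotient maps by faithful finite group actions on a normal complex space, so that the standard theory of quotients by finite groups applies locally. Throughout I will use that for a finite group $G$ acting faithfully on a normal complex space $X$, the quotient $X/G$ exists as a normal complex space, the projection is a finite surjective proper holomorphic map, and the fiber over an orbit $Gx$ is set-theoretically the orbit itself, i.e.\ in bijection with $\stab(G,x)\backslash G$. The local degree (ramification degree) of $X \to X/G$ at a point $x$ is exactly $\#\stab(G,x)$: étale-locally near $x$ the map looks like the quotient by the stabiliser, which acts freely away from $x$ on a punctured neighbourhood, so the generic local degree equals the order of the stabiliser.

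For part (1), the degree of a quotient map $X \to X/G$ by a faithful finite group action equals $\#G$, since the generic fiber is a free orbit. Applying this to $\alpha$ gives $\deg\alpha = \#H$ and to $\beta\circ\alpha$ gives $\deg(\beta\circ\alpha) = \#G$. By multiplicativity of degrees under composition, $\deg\beta = \#G/\#H = [G:H]$.

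For part (2), first I would identify the reduced fiber of $\beta\circ\alpha$ over $Gx$ with the orbit $Gx \cong \stab(G,x)\backslash G$. The fiber of $\beta$ over $Gx$ is the image of this set under $\alpha$, i.e.\ the set of $H$-orbits inside $Gx$. Writing points of $Gx$ as $x^g$ for $g \in G$ (using the right-action convention $x^g$ fixed in the preliminaries), two elements $x^{g_1}$ and $x^{g_2}$ lie in the same $H$-orbit iff $x^{g_1 h} = x^{g_2}$ for some $h \in H$, i.e.\ iff $g_1 h g_2^{-1} \in \stab(G,x)$. Thus the set of $H$-orbits in $Gx$ is precisely the double coset space $\stab(G,x)\backslash G / H$, giving the stated bijection $\beta^{-1}(Gx) \cong x\stab(G,x)\backslash G / H$.

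For part (3), the formula $e_x(\alpha) = \#\stab(H,x)$ and $e_x(\beta\circ\alpha) = \#\stab(G,x)$ follow directly from the local description of the ramification degree of a finite-group quotient recalled above, applied to $H$ and $G$ respectively. The remaining identity $e_{\alpha(x)}(\beta) = [\stab(G,x):\stab(H,x)]$ then follows from multiplicativity of local degrees under composition, $e_x(\beta\circ\alpha) = e_x(\alpha)\cdot e_{\alpha(x)}(\beta)$, together with the inclusion $\stab(H,x) = \stab(G,x)\cap H \leq \stab(G,x)$ which makes the quotient of orders an integer equal to the index. I expect the main subtlety to be the justification of the local ramification-degree formula $e_x = \#\stab(G,x)$ for a quotient by a finite group on a normal space: one must note that the stabiliser $\stab(G,x)$ acts on a contractible Stein neighbourhood of $x$ with $x$ as the only point of its orbit meeting a suitably small invariant neighbourhood, so that the local degree of $X \to X/\stab(G,x)$ at $x$ is $\#\stab(G,x)$, and that the residual action of $G/\!\sim$ is free near the image, contributing nothing further to the local degree; this is exactly the content of \cite[Chapter~I Section~16]{bhpv:compact-complex-surfaces} invoked earlier.
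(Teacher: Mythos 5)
Your proposal is correct in substance and, for parts (1) and (2), coincides with the paper's argument: the degree of a quotient by a faithful finite group equals the group order because the generic orbit is free, multiplicativity of degrees gives $\deg\beta=[G:H]$, and the $H$-orbits inside the $G$-orbit of $x$ are enumerated by $\stab(G,x)\backslash G/H$. In part (3) you take a genuinely different route. The paper obtains $e_x(\alpha)=\#\stab(H,x)$ by a homogeneity count: $H$ acts transitively on the fiber of $\alpha$ through $x$, so all local degrees along that fiber agree, and summing them gives $\#H=\deg\alpha=e_x(\alpha)\cdot[H:\stab(H,x)]$; it then gets the formula for $\beta$ by restricting all three maps to a small neighbourhood of $x$, on which the acting groups become the stabilisers, and reapplying part (1) to these restrictions. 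You instead read off $e_x(\alpha)$ and $e_x(\beta\circ\alpha)$ from the local model of a finite quotient and deduce $e_{\alpha(x)}(\beta)=[\stab(G,x):\stab(H,x)]$ from multiplicativity of local degrees under composition. Your treatment of the last formula is cleaner (the paper's localization step is essentially an inline proof of that multiplicativity), but your argument leans on the local structure theorem for quotient singularities for all three formulas, whereas the paper needs it only in the final localization.

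There is, however, one incorrect justification: you claim that the stabiliser ``acts freely away from $x$ on a punctured neighbourhood''. This is false in general, since a non-identity element of $\stab(G,x)$ can fix a positive-dimensional analytic set through $x$ --- a reflection fixing a hyperplane is exactly such an element, and reflections are precisely the kind of isometries that occur in the groups this lemma is applied to in \Cref{thm:branch}. What your argument actually needs, and what is true, is weaker: since $G$ acts faithfully on the connected normal (hence irreducible) space $X$, the identity theorem shows that each non-identity element of $\stab(G,x)$ fixes only a proper analytic subset of any small invariant connected neighbourhood $U$ of $x$; the union of these finitely many fixed loci is therefore nowhere dense, so the \emph{generic} orbit of $\stab(G,x)$ in $U$ is free. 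This yields $\deg\bigl(U\to U/\stab(G,x)\bigr)=\#\stab(G,x)$, hence the local degree formula, and with this repair the rest of your proof goes through unchanged.
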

 \begin{proof}
 1) We have $\# G = \deg (\beta \circ \alpha)  = \deg \alpha \cdot \deg \beta = \# H \deg \beta$.

 2) The double coset gives the decomposition of $G x$ into $H$-orbits.

 3) %\simon{this needs a proof or reference. I think that this is true because
 %\[\deg \beta = [G:H] = \sum_{g \in H \backslash G / stab(G,x)} \left[\stab(G,x): \stab(G,x) \cap g H g^{-1}\right]\]}.
Since $H$ acts transitively on the fiber of $\alpha$ over $\alpha(x)$, each point of the fiber has the same ramification degree $m=e_x$. Now
\[\deg \alpha = \# H = m \cdot \# H x = m [H : \stab(H,x)].\]
Thus $e_x=m=\# \stab(H,x)$. The same argument works for $\beta\circ \alpha$.

Let $X'$ be a sufficiently small open neighborhood of $x$ such that $e_x(\alpha\circ\beta) = \deg (\beta\circ\alpha|_{X'})$, $G'=\stab(G,x)$ and $H'=\stab(H,x)$. Let $\alpha',\beta'$ be the corresponding restrictions of $\alpha$ and $\beta$.
Note that the fiber of $\beta'\circ\alpha'$ over $\beta'\circ\alpha'(x)$ consists of a single point. Therefore
$e_{x}\alpha=\deg \alpha'$, $e_x(\beta\circ\alpha)=\deg( \beta'\circ\alpha')$ and $e_{\alpha(x)}(\beta)=\deg \beta'$.
Now the claim follows with (1) applied to $\alpha'$, $\beta'$ and $\beta'\circ\alpha'$.
\end{proof}

Recall that $O(E_{10},h)$ denotes the stabilizer of $h \in E_{10}$ in $O(E_{10})$.
Set \[\overline{O(E_{10},h)} := \im ( O(E_{10},h) \to O(E_{10}\otimes \FF_2)).\]
\begin{theorem}\label{thm:branch}
Let $h \in E_{10}$ with $h^2\geq 0$. Consider the finite covering
\[\P^h_0\colon \M_{En,h} \to \M_{En}.\]

The following hold.
\begin{enumerate}[leftmargin=*]
\item The degree of $\P^h_0$ is given by
\[\deg \P^h_0 = \left[O(E_{10}\otimes \FF_2) : \overline{O(E_{10},h)}\right].\]
\item The fiber of $\P^h_0$ over the (unpolarized) Enriques surface $Y$ is
\[\left(\P_0^h\right)^{-1}([Y,0]) \cong \overline{O(S_Y,h)} \backslash O(S_Y\otimes \FF_2) / \Gbar_Y  .\]
In particular, the number of numerical $h$-quasi polarizations on $Y$ up to isomorphism, i.e. up to $\aut(Y)$ is
$\# O(E_{10}\otimes \FF_2) / \# \langle  \overline{O(E_{10},h)}, \Gbar_Y\rangle$.
\item The ramification degree of $\P^h_0$ at a point $[h',\omega]$ in the fiber over $[Y]$ is given by
\[\left[\;\Gbar_Y: \Gbar_Y \cap \overline{O(S_Y, h')}\right].\]
\end{enumerate}
\end{theorem}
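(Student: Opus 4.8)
The plan is to recognise the three maps as the structure maps of a tower of finite quotients of the Galois cover $\M_{En,v}$ and then read off everything from \Cref{lem:quotients}. Put $X:=\M_{En,v}=\Omega_N^\circ/\Gamma_v$ with $\Gamma_v=O^\sharp(N)$, and set $G:=O(N)/O^\sharp(N)$ and $H:=\Gamma_h/O^\sharp(N)$, so that $O^\sharp(N)\subseteq\Gamma_h\subseteq O(N)$ furnishes the chain of subgroups. Because $O^\sharp(N)$ is the kernel of $O(N)\to O(N^\vee/N)$ it has finite index, so $G$ and $H$ are finite; and since $-\id_N\in O^\sharp(N)$ while a very general period has stabiliser $\{\pm\id_N\}$ in $O(N)$, the induced $G$-action on $X$ is faithful. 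Under the identifications $\M_{En,h}\cong\Omega_N^\circ/\Gamma_h$ and $\M_{En,0}\cong\Omega_N^\circ/O(N)$ the tower becomes $X\xrightarrow{\alpha}X/H\xrightarrow{\beta}X/G$ with $\alpha=\P^v_h$, $\beta=\P^h_0$ and $\beta\circ\alpha=\P^v_0$. It therefore suffices, for a point $y\in X$ lying over $[Y]\in\M_{En}$, to compute the four groups $G$, $H$, $\stab(G,y)$ and $\stab(H,y)=H\cap\stab(G,y)$; \Cref{lem:quotients} then yields (1), (2), (3) mechanically.

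Next I would make the discriminant identification. The gluing $M\oplus N\subseteq\Lambda$ induces an anti-isometry of discriminant forms $q_N\cong q_M$, and since $M\cong E_{10}(2)$ its discriminant form is $(E_{10}\otimes\FF_2,\ x\mapsto x^2/2)$; hence $O(q_N)\cong O(q_M)\cong O(E_{10}\otimes\FF_2)=O(S_Y\otimes\FF_2)$. The reduction $O(N)\to O(q_N)$ is surjective with kernel $O^\sharp(N)$, so $G\xrightarrow{\sim}O(S_Y\otimes\FF_2)$. Under this isomorphism $H=\Gamma_h/O^\sharp(N)$ is exactly $\overline{O(S_Y,h)}$: an isometry of $N$ lies in $\Gamma_h=\im(O(\Lambda,M,h)\to O(N))$ iff its discriminant action is induced by some $g\in O(E_{10},h)$, and conversely every such $g$ extends across the gluing (modulo the discriminant-trivial group $O^\sharp(N)$) since any compatible pair of isometries glues on $M\oplus N$. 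This already gives (1): by \Cref{lem:quotients}(1), $\deg\P^h_0=\deg\beta=[G:H]=[O(E_{10}\otimes\FF_2):\overline{O(E_{10},h)}]$.

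The heart of the argument — and what I expect to be the main obstacle — is the identification $\stab(G,y)=\Gbar_Y$ inside $O(S_Y\otimes\FF_2)$. Writing $y$ as the class of a period $\omega\in\Omega_N^\circ$, its stabiliser in $G$ is the image of $\stab(O(N),\omega)$, the group of Hodge isometries of $N$ fixing $\omega$. I would prove the two inclusions exactly as in the proof of \Cref{thm:moduli}. For $\Gbar_Y\subseteq\stab(G,y)$: every element of $G_Y$ lifts, via \Cref{prop:GYG_+}, to an element of $G_{X|Y}$ that commutes with $\epsilon$ and is a Hodge isometry of $H^2(X,\ZZ)=\Lambda$ fixing $\omega$ (the reflection part acts trivially on $T_X$, the automorphism part fixes $\omega$ tautologically); restricting to $N$ lands in $\stab(O(N),\omega)$, and its action on $q_N\cong q_M$ is the prescribed element of $\Gbar_Y$. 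For the reverse inclusion: given $g_N\in\stab(O(N),\omega)$ with discriminant class $\overline{g}$, choose a compatible $g_M\in O(S_Y,\P_Y)$ (after a sign normalisation); then $g=(g_M,g_N)$ is a Hodge isometry preserving $M,N$ and $\omega$, hence preserves $S_X$ and $S_\pm$, so by the lattice description in \Cref{lem:splitting-root} it preserves $\Delta(Y)$. Thus $g_M$ carries the nef chamber to another $\Delta(Y)$-chamber, and composing with a suitable element of $W(Y)$ lands in $\aut(Y)$; since $G_Y=W(Y)\cdot\aut(Y)$ this gives $g_M\in G_Y$ and therefore $\overline{g}\in\Gbar_Y$. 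As a cross-check, $[G:\stab(G,y)]$ then equals the number of $\Delta(Y)$-chambers modulo $\aut(Y)$, i.e. of Weyl vectors up to $\aut(Y)$, which is $[O(S_Y\otimes\FF_2):\Gbar_Y]$ by \Cref{corollary:nefmodaut}.

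Finally I would assemble the statements. With $\stab(G,y)=\Gbar_Y$ and $H=\overline{O(S_Y,h)}$, \Cref{lem:quotients}(2) gives the fibre of $\beta=\P^h_0$ over $[Y]$ as $\Gbar_Y\backslash O(S_Y\otimes\FF_2)/\overline{O(S_Y,h)}$, which is put in the stated form $\overline{O(S_Y,h)}\backslash O(S_Y\otimes\FF_2)/\Gbar_Y$ by inversion; the number of $h$-quasi-polarizations on $Y$ up to $\aut(Y)$ is then the cardinality of this double-coset space, proving (2). For (3), the point $[h',\omega]$ in the fibre is $\alpha(y')$ for a translate $y'$ whose stabiliser $\stab(G,y')$ is the corresponding conjugate of $\Gbar_Y$; \Cref{lem:quotients}(3) gives the ramification $[\stab(G,y'):H\cap\stab(G,y')]$, and conjugating the chosen double-coset representative so that the polarization carried by $y'$ is the nef class $h'$ replaces $H=\overline{O(S_Y,h)}$ by $\overline{O(S_Y,h')}$, yielding $[\Gbar_Y:\Gbar_Y\cap\overline{O(S_Y,h')}]$. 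This agrees with the ramification degree $r_{h'}$ introduced before \Cref{thm:fiber_charp}, so the two sections are consistent. The only genuinely delicate point is the chamber/Torelli identification of the third paragraph; everything else is bookkeeping with \Cref{lem:quotients} and the discriminant isomorphism.
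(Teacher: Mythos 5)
Your proposal is correct and takes essentially the same route as the paper: the same tower $\Omega_N^\circ/O^\sharp(N)\to\Omega_N^\circ/\Gamma_h\to\Omega_N^\circ/O(N)$, the same appeal to \Cref{lem:quotients}, and the same identifications $\overline{\Gamma}_0\cong O(E_{10}\otimes\FF_2)$, $\overline{\Gamma}_h\cong\overline{O(E_{10},h)}$ and $\stab(\overline{\Gamma}_0,[\omega])\cong\Gbar_Y$, the only real difference being that your third paragraph actually proves (via \Cref{prop:GYG_+}, \Cref{lem:splitting-root}, \Cref{lem:transitive} and the Torelli argument of \Cref{thm:moduli}) the stabiliser identification $G_\omega\cong G_Y$ that the paper's proof simply asserts with ``a marking gives identifications''. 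Note that, exactly like the paper's own proof, you establish only the double-coset description in (2) and not the ``in particular'' formula $\#O(E_{10}\otimes\FF_2)/\#\langle\overline{O(E_{10},h)},\Gbar_Y\rangle$ — which in fact does not follow from it, since double cosets need not have equal size (compare the paper's remark after \Cref{thm:fiber_charp} and the unequal values of $r_h$ in the tables) — so your omission there matches the paper's.
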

\begin{proof}
Recall that for a fixed $h$ we have a commutative diagram
\begin{equation}\label{eq:ramification}\begin{tikzcd}
    \M_{En,h} =(hO(E_{10}) \times \Omega_N^\circ)/G \arrow[r]{}{\sim}\arrow[d]{}{\P^h_0} & \Omega_N^\circ/\Gamma_h \arrow[ld]{}{\pi}\\
    \M_{En} = \Omega_N^\circ/O(N)&
\end{tikzcd}
\end{equation}
and the horizontal isomorphism is given by $(h',\omega)G \mapsto \omega' \Gamma_h $ where $\omega'$ is defined by  $(h,\omega') \in (h',\omega)G$.

1) We work on the side of $\pi$ and apply \Cref{lem:quotients} in the situation that
 $X = \M_{En,w}\cong \Omega_N^\circ/\O^\sharp(N)$, $H = \overline{\Gamma}_h \cong \overline{O(E_{10},h)}$, $G=\overline{\Gamma}_0 \cong O(E_{10} \otimes \FF_2)$, $\alpha = \P^w_h$, $\beta=\P^h_0$.

2) We work directly with $\P^h_0$.
Let $\omega$ be any period of $Y$ and $G_\omega$ be the image of $O(\Lambda,M,\omega) \to O(E_{10})$. The fiber is thus given by
$(h,\omega)O(\Lambda,M, \omega) = (hO(E_{10})\times \{\omega\})/G_\omega $. A marking gives identifications $E_{10}\cong S_Y$ and
$G_\omega \cong G_Y$.
By \Cref{prop:G0} we know $O^\sharp(S_Y(2)) \subseteq G_Y$ and hence
\begin{eqnarray*}
\left(\P_0^h\right)^{-1}([Y]) &\cong&  hO(S_Y)/ G_Y \\
&\cong&  O(S_Y,h) \backslash O(S_Y) / G_Y \\
&\cong&  \overline{O(S_Y,h)} \backslash O(S_Y\otimes \FF_2) / \Gbar_Y
\end{eqnarray*}

3) We work on the side of $\pi$ again and invoke \Cref{lem:quotients}.
Translating to our situation let $\omega$ be the period of $Y$ (with respect to some markings) and
$\omega g_2 \Gamma_h$ a point in the fiber of $\pi$ over $\Gamma_0 \omega$ where $g_2 \in \Gamma_0$.
Denote $[\omega]:= \omega\O^\sharp(N)$.
By the lemma the ramification degree is
\[[\stab(\overline{\Gamma}_0,[\omega]): \stab(\overline{\Gamma}_0,[\omega]) \cap \overline{g}_2^{-1} \overline{\Gamma}_h \overline{g}_2] \]
Using the isomorphism $E_{10}\otimes \FF_2 \cong M^\vee/M \cong N^\vee /N$, we can transport everything to $E_{10}$ which we identify with $S_Y$ via the marking. Then $\stab(\overline{\Gamma}_0, [\omega]) \cong \Gbar_Y$ and $\overline{\Gamma}_h \cong \overline{O(E_{10},h)}$.
Let $g_1 \in O(E_{10})$ such that $\tilde{g} = (g_1,g_2) \in G$ and set $h' = h{g_1^{-1}}$.
Then $(h',\omega)\tilde g = (h, \omega g_2)$. Now $\overline{g}_2^{-1} \overline{\Gamma}_h \overline{g}_2 \cong \overline{g}_1^{-1} \overline{O(E_{10},h)} \overline{g}_1 = \overline{O(E_{10},h')}= O(S_Y, h')$.
\end{proof}
\begin{remark}
In \cite[Table 8.3]{enriquesII} Dolgachev and Kondo list the degrees, orbits and numbers $r_i'$ for general 1-nodal Enriques surfaces. They remark that the weighted sum of the orbits with weights $r_i'$ adds up to the total degree of the map. They speculate that the $r_i'$ should be the ramification degrees $r_i$ of $\M_{En,h} \to \M_{En}$. \Cref{thm:branch} confirms this.
\end{remark}

\subsection{Weyl polarizations}
%Since the orthogonal complement of a Weyl vector in $E_{10}$ does not contain any root, every numerical Weyl-quasi polarization is a numerical polarization.
Recall that the projectivization of the positive cone $\P_Y$ is a $9$-dimensional hyperbolic space $\P_Y/\RR_+ = \mathbb{H}^9$ which comes with a hyperbolic metric and a volume. The hyperbolic volume of a cone $C$ in $\P_Y$ is defined as the volume of its projectivization $C/\RR_+ \subseteq \mathbb{H}^9$.
\begin{theorem}
Let $(Y,W)$ be a numerically Weyl quasi-polarized complex Enriques surface.
\begin{enumerate}
\item The ramification degree of $[(Y,W)]$ under the map $\P^v_0 \colon \M_{En,v} \to \M_{En,0}$
is given by $\sharp \Gbar_Y$.
\item The hyperbolic volume of the fundamental domain of $\aut_Y$ on $\Nef(Y)$ is given by
\[ \#(\P^v_0)^{-1}([Y])1_{\mathrm{Vin}}= \left[ O(S_Y \otimes \FF_2) : \Gbar_Y\right]1_{\mathrm{Vin}}= 1_{\mathrm{BP}}/\#\Gbar_Y\]
where $1_{\mathrm{Vin}}$ is the hyperbolic volume of a Vinberg chamber
and $1_{\mathrm{BP}}$ the hyperbolic volume of $\Nef(Y_0)/\aut(Y_0)$ for $Y_0$ a general Enriques surface.
\end{enumerate}
\end{theorem}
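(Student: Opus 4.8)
The plan is to read off both statements from \Cref{thm:branch}, applied with polarization class $h=v$ a Weyl vector, together with \Cref{corollary:nefmodaut} for the volume formula. The single input that makes everything collapse is recalled in \Cref{sec:vinberg}: a Weyl vector has trivial stabiliser in $O(E_{10})$. Transporting this along a marking $S_Y\cong E_{10}$, for any representative $W$ of the orbit inside $S_Y$ the group $O(S_Y,W)$ is trivial, so its image $\overline{O(S_Y,W)}$ in $O(S_Y\otimes\FF_2)$ is trivial as well. Everything else is bookkeeping.

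For part (1) I would invoke \Cref{thm:branch}(3), which gives the ramification degree of $\P^v_0$ at the point $[W,\omega]$ in the fiber over $[Y]$ as the index $[\Gbar_Y:\Gbar_Y\cap\overline{O(S_Y,W)}]$. Substituting $\overline{O(S_Y,W)}=\{1\}$ from the observation above, this index equals $[\Gbar_Y:\{1\}]=\#\Gbar_Y$, which is exactly the assertion.

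For part (2) I would first count the points of the fiber. By \Cref{thm:branch}(2) the fiber $(\P^v_0)^{-1}([Y])$ is in bijection with the double coset $\overline{O(S_Y,v)}\backslash O(S_Y\otimes\FF_2)/\Gbar_Y$; since $\overline{O(S_Y,v)}$ is trivial this reduces to $O(S_Y\otimes\FF_2)/\Gbar_Y$, of cardinality $[O(S_Y\otimes\FF_2):\Gbar_Y]$. This yields the first equality $\#(\P^v_0)^{-1}([Y])\,1_{\mathrm{Vin}}=[O(S_Y\otimes\FF_2):\Gbar_Y]\,1_{\mathrm{Vin}}$. The remaining two equalities are precisely the content of \Cref{corollary:nefmodaut}, so I would simply cite it.

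Since \Cref{thm:branch} and \Cref{corollary:nefmodaut} carry the real content, I do not expect a genuine obstacle. The only point worth a sanity check is internal consistency: because $v$ is a Weyl vector, $\P^v_0$ is the Galois cover $\Omega_N^\circ/O^\sharp(N)\to\Omega_N^\circ/O(N)$ with group $O(N)/O^\sharp(N)\cong O(S_Y\otimes\FF_2)$, so all fibers are uniform and the product of the fiber cardinality with the common ramification degree must recover the total degree. Indeed $[O(S_Y\otimes\FF_2):\Gbar_Y]\cdot\#\Gbar_Y=\#O(S_Y\otimes\FF_2)$, confirming that parts (1) and (2) fit together as they should.
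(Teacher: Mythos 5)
Your proposal is correct and follows essentially the same route as the paper: both parts hinge on the fact that a Weyl vector has trivial stabiliser in $O(E_{10})$, so that $\overline{O(S_Y,W)}$ is trivial and the formulas of \Cref{thm:branch} collapse to $\#\Gbar_Y$ for the ramification degree and $\left[O(S_Y\otimes\FF_2):\Gbar_Y\right]$ for the fiber count. The only cosmetic difference is in part (2): you source the volume identities by citing \Cref{corollary:nefmodaut} (legitimate, since it was established independently in Section 4), whereas the paper re-derives them on the spot by observing that the fundamental domain is tessellated by Vinberg chambers whose number is the fiber cardinality, and that $\Gbar_{Y_0}=1$ for a general surface gives $1_{\mathrm{BP}}=\#O(S_Y\otimes\FF_2)\,1_{\mathrm{Vin}}$ — the same underlying argument.
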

\begin{proof}
For the Weyl vector $v$ we have $O(E_{10},v) = 1$ and therefore the formulas simplify.
1) Follows directly from \Cref{thm:branch}.
2) The fundamental domain of $\aut(Y)$ on $\Nef(Y)$ tesselated by Vinberg chambers.
Their number is given by the cardinality of the fiber of $\P^v_0$ over $[Y]$ which is $[O(E_{10}\otimes \FF_2): \Gbar_Y]$.
Finally, for a general Enrique surface $\Gbar_{Y_0}=1$ is trivial; hence $1_\mathrm{{BP}}= \# O(S_Y\otimes \FF_2) 1_{\mathrm{Vin}}$.
\end{proof}
%\subsection*{Quasi-polarizations}

\begin{remark}\label{rmk:modular_w_h}
Let $h \in E_{10}$ with $h^2>0$ and $v \in E_{10}$ a fixed Weyl vector. We can give a modular interpretation of the map
\[\P^v_h \colon \M_{En,v} \to \M_{En, h} \]
as follows.
Recall from \Cref{sec:vinberg} that the Weyl vector $v$ corresponds to a fundamental root system $\{e_1,\dots e_{10}\}=\{e \in E_{10} \mid e^2 = -2, e.v =1\}$, which forms an $E_{10}$ diagram.

Let $(Y,W)$ be a Weyl polarized Enriques surface. Then \[\{e \in S_Y \mid e^2 = -2, e.W=1\} = \{e_1',\dots e_{10}'\}\]
is an $E_{10}$ diagram and the labels are uniquely determined as in \Cref{figure:E10}. Thus $e_i' \mapsto e_i$ defines an isomorphism $\phi_W \colon S_Y \to E_{10}$.
It satisfies $\phi_W(W) = v$. %We see that a Weyl polarized Enriques surface is the same as a marked Enriques surface $(Y,\phi)$ with the extra requirement that $W:=\phi^{-1}(v)$ is nef (equivalently ample).

Choose and fix $h'$ in $O(E_{10})h\cap C(v)$ and map
$(Y,W) \mapsto (Y,H)$ with $H:=\Phi_W^{-1}(h')$. Since $H \in \phi_W^{-1}(C(v))$ is contained in the nef cone of $Y$, $(Y,H)$ is indeed numerically $h$-quasi-polarized.
If we choose another $h'' \in O(E_{10})h \cap C(w)$, then we get a different map. This is in accordance with the maps $\P^v_h$ depending on the choice of $h$ and different choices lead to different (but conjugate) groups $\Gamma_{h'}$.

\end{remark}
\begin{remark}
The previous construction works in families and in positive characteristic as well. Hence it should give a morphism of the moduli stacks of numerically quasi-polarized Enriques surfaces similar to the situation over $\CC$:
\[\M_{En,v}\to \M_{En,h} \to \M_{En,0}.\]
It is not clear to us whether the first map $\P^v_h$ can be interpreted as Galois covers. On the level of isomorphism classes, one can define a \emph{pointwise} group action of $S=O(E_{10} \otimes \FF_2)$ on $\M_{En,w}$ such that $[Y,W]$ and $[Y',W']$ are in the same $S$ orbit if and only if $[Y]=[Y']$. However this does not seem to work in families because our construction of the group action involves the Weyl group $W(Y)$ of $Y$, which can vary wildly in families.
\end{remark}

\bibliographystyle{alpha}
\bibliography{527elliptic_fibrations}

\newcommand{\etalchar}[1]{$^{#1}$}
\begin{thebibliography}{BHPVdV04}

\bibitem[BB66]{baily-borel}
W.~L. Baily, Jr. and A.~Borel.
\newblock Compactification of arithmetic quotients of bounded symmetric
  domains.
\newblock {\em Ann. of Math. (2)}, 84:442--528, 1966.

\bibitem[BHPVdV04]{bhpv:compact-complex-surfaces}
Wolf~P. Barth, Klaus Hulek, Chris A.~M. Peters, and Antonius Van~de Ven.
\newblock {\em Compact complex surfaces}, volume~4.
\newblock Springer-Verlag, Berlin, second edition, 2004.

\bibitem[BL19]{bragg-lieblich}
Daniel Bragg and Max Lieblich.
\newblock Twistor spaces for supersingular k3 surfaces.
\newblock \url{https://arxiv.org/abs/1804.07282}, 2019.

\bibitem[BP83]{barth-peters:very_general_enriques}
W.~Barth and C.~Peters.
\newblock Automorphisms of {E}nriques surfaces.
\newblock {\em Invent. Math.}, 73(3):383--411, 1983.

\bibitem[BS22]{brandhorst_shimada:tautaubar}
Simon Brandhorst and Ichiro Shimada.
\newblock Automorphism groups of certain {E}nriques surfaces.
\newblock {\em Found. Comput. Math.}, 22(5):1463--1512, 2022.

\bibitem[CD85]{cossec-dolgachev:automorphisms_nodal}
F.~Cossec and I.~Dolgachev.
\newblock {On automorphisms of nodal Enriques surfaces}.
\newblock {\em Bulletin (New Series) of the American Mathematical Society},
  12(2):247 -- 249, 1985.

\bibitem[CDGK23]{ciliberto23}
Ciro Ciliberto, Thomas Dedieu, Concettina Galati, and Andreas~Leopold Knutsen.
\newblock Irreducible unirational and uniruled components of moduli spaces of
  polarized {E}nriques surfaces.
\newblock {\em Math. Z.}, 303(3):Paper No. 73, 34, 2023.

\bibitem[CDL24]{cdl:enriquesI}
François Cossec, Igor Dolgachev, and Christian Liedtke.
\newblock {E}nriques {S}urfaces {I}.
\newblock \url{https://dept.math.lsa.umich.edu/~idolga/EnriquesOne.pdf}, 2024.

\bibitem[DEF{\etalchar{+}}24]{OSCAR-book}
Wolfram Decker, Christian Eder, Claus Fieker, Max Horn, and Michael Joswig,
  editors.
\newblock {\em The {C}omputer {A}lgebra {S}ystem {OSCAR}: {A}lgorithms and
  {E}xamples}, volume~32 of {\em Algorithms and {C}omputation in
  {M}athematics}.
\newblock Springer, 1 edition, 8 2024.

\bibitem[DK24]{enriquesII}
Igor Dolgachev and Shigeyuki Kodo.
\newblock {\em Enriques Surfaces II}.
\newblock Springer Nature, April 19th 2024.
\newblock \url{https://dept.math.lsa.umich.edu/~idolga/EnriquesTwo.pdf}.

\bibitem[DM19]{dolgachev_martin:numericallytrivial}
Igor Dolgachev and Gebhard Martin.
\newblock Numerically trivial automorphisms of {E}nriques surfaces in
  characteristic 2.
\newblock {\em J. Math. Soc. Japan}, 71(4):1181--1200, 2019.

\bibitem[DSH24]{hulek-dutour}
Mathieu Dutour-Sikiri{\'c} and Klaus Hulek.
\newblock Moduli of polarised {Enriques} surfaces -- computational aspects.
\newblock {\em J. Lond. Math. Soc., II. Ser.}, 109(1):32, 2024.
\newblock Id/No e12828.

\bibitem[GH16]{gritsenko-hulek}
V.~Gritsenko and K.~Hulek.
\newblock Moduli of polarized {E}nriques surfaces.
\newblock In {\em K3 surfaces and their moduli}, volume 315 of {\em Progr.
  Math.}, pages 55--72. Birkh\"auser/Springer, [Cham], 2016.

\bibitem[Huy16]{huybrechts:k3}
Daniel Huybrechts.
\newblock {\em Lectures on {K}3 surfaces}, volume 158 of {\em Cambridge Studies
  in Advanced Mathematics}.
\newblock Cambridge University Press, Cambridge, 2016.

\bibitem[Jan16]{jang:representation}
Junmyong Jang.
\newblock The representations of the automorphism groups and the {F}robenius
  invariants of {K}3 surfaces.
\newblock {\em Michigan Math. J.}, 65(1):147--163, 2016.

\bibitem[Kne02]{kneser}
Martin Kneser.
\newblock {\em Quadratische {F}ormen}.
\newblock Springer-Verlag, Berlin, 2002.
\newblock Revised and edited in collaboration with Rudolf Scharlau.

\bibitem[Knu20]{knutsen}
Andreas~Leopold Knutsen.
\newblock On moduli spaces of polarized {E}nriques surfaces.
\newblock {\em J. Math. Pures Appl. (9)}, 144:106--136, 2020.

\bibitem[LM18]{maulik}
Max Lieblich and Davesh Maulik.
\newblock A note on the cone conjecture for {K}3 surfaces in positive
  characteristic.
\newblock {\em Math. Res. Lett.}, 25(6):1879--1891, 2018.

\bibitem[Mar19]{martin:automorphisms_of_unnodal_enriques_surfaces}
Gebhard Martin.
\newblock Automorphisms of unnodal {E}nriques surfaces, 2019.

\bibitem[MN84]{mukai_namikawa:numerically_trivial}
Shigeru Mukai and Yukihiko Namikawa.
\newblock Automorphisms of {E}nriques surfaces which act trivially on the
  cohomology groups.
\newblock {\em Invent. Math.}, 77(3):383--397, 1984.

\bibitem[Nik79a]{nikulin:auto}
V.~V. Nikulin.
\newblock Finite groups of automorphisms of {K}\"{a}hlerian {$K3$} surfaces.
\newblock {\em Trudy Moskov. Mat. Obshch.}, 38:75--137, 1979.

\bibitem[Nik79b]{nikulin}
V.~V. Nikulin.
\newblock Integer symmetric bilinear forms and some of their geometric
  applications.
\newblock {\em Izv. Akad. Nauk SSSR Ser. Mat.}, 43(1):111--177, 238, 1979.

\bibitem[Ogu83]{ogus:83}
Arthur Ogus.
\newblock A crystalline {T}orelli theorem for supersingular {K}3 surfaces.
\newblock In {\em Arithmetic and geometry, {V}ol. {II}}, volume~36 of {\em
  Progr. Math.}, pages 361--394. Birkh\"auser Boston, Boston, MA, 1983.

\bibitem[OSC24]{OSCAR}
Oscar -- open source computer algebra research system, version 1.1.1.
\newblock \url{https://www.oscar-system.org}, 2024.

\bibitem[Shi15]{shimada:k3_auto}
Ichiro Shimada.
\newblock An algorithm to compute automorphism groups of {$K3$} surfaces and an
  application to singular {$K3$} surfaces.
\newblock {\em Int. Math. Res. Not. IMRN}, (22):11961--12014, 2015.

\bibitem[Vin75]{vinberg}
\`E.~B. Vinberg.
\newblock Some arithmetical discrete groups in {L}oba\v{c}evski\u{\i} spaces.
\newblock In {\em Discrete subgroups of {L}ie groups and applications to moduli
  ({I}nternat. {C}olloq., {B}ombay, 1973)}, volume No. 7 of {\em Tata Inst.
  Fundam. Res. Stud. Math.}, pages 323--348. Published for the Tata Institute
  of Fundamental Research, Bombay by Oxford University Press, Bombay, 1975.

\end{thebibliography}

\section{Appendix: Tables}
The following tables give examples for the ramification indices $r_h$ as in \Cref{thm:branch}.
The data was generated with the help of the computer algebra system OSCAR \cite{OSCAR,OSCAR-book}.

There are 1,2,2,2,3 $O(E_{10})$-orbits of primitive vectors $h\in E_{10}$ with $h^2 = 2,4,6,8,10$. For fixed $h^2$ they are distinguished by their $\phi$-invariant
$\phi(h):=\min \{h.f \mid 0\neq f\in E_{10}, f^2=0\}$. See \Cref{table1} %\victor{Homework for me: learn how to cite and reference tables ;)}\simon{just put the label directly after the caption}

\begin{table}
\caption{$O(E_{10})$ - orbits of vectors}\label{table1}
\begin{tabular}{ccc}
\toprule
\(h^2\) & \(\phi(h)\) & \([O(E_{10}\otimes \FF_2): \overline{O(E_{10},h})]\)\\
\hline
0 & - &\( 17 \cdot 31\)\\
2 & 1 & \(2^7 \cdot 17  \cdot 31\)\\
4 & 1 & \(2^8\cdot 17\cdot 31\)\\
4 & 2 & \(2^6\cdot 3\cdot 5\cdot 17\cdot 31\)\\
6 & 1 & \(2^8\cdot 17\cdot 31\)\\
6 & 2 & \(2^{10}\cdot 5\cdot 17\cdot 31\)\\
8 & 1 & \(2^8\cdot 17\cdot 31\)\\
8 & 2 & \(2^7\cdot 3^3\cdot 5\cdot 17\cdot 31\)\\
10 & 1 & \(2^8\cdot 17\cdot 31\)\\
10 & 2 & \(2^{10}\cdot 3\cdot 5\cdot 17\cdot 31\)\\
10 & 3 & \(2^{13}\cdot 3\cdot 17\cdot 31\)\\
\bottomrule
\end{tabular}
\end{table}

\begin{table}
\caption{$\Aut(Y)$-orbits on $(A_1,A_1)$-generic Enriques surfaces}
\renewcommand{\arraystretch}{1.3}
 \rowcolors{1}{}{lightgray}
 \begin{tabular}[t]{ccccc}
 \toprule
$h^2$ &$\phi(h)$&  singularities & number of $\Aut(Y)$-orbits & $r_h$ \\
\hline
2 &1 & $\emptyset$ &16320 & 2 \\
& & $A_1$ & 16456 & 1 \\
& & $A_1$ &9180& 2 \\
4 &2 & $\emptyset$ &130560 & 2 \\
& & $A_1$ & 73440 & 1 \\
& & $A_1$ &85680& 2 \\
 6 & $2$ & $\emptyset$ & 652800& 2 \\
& & $A_1$ &364480 & 1 \\
& & $A_1$ &514080 & 2 \\
 8 & $2$ & $\emptyset$ & 2350080 & 2 \\
& & $A_1$ & 1028160& 1 \\
&  & $A_1$ &  1689120 & 2 \\
 10 & $3$ & $\emptyset$ & 3133440& 2 \\
& & $A_1$ & 1175040 & 1 \\
& & $A_1$ & 2754816 & 2 \\
\bottomrule
\end{tabular}
\end{table}

\renewcommand{\arraystretch}{1.3}
 \rowcolors{1}{}{lightgray}
 \begin{longtable}{ccccc}
 \caption{$\Aut(Y)$-orbits on $(E_8,E_8)$-generic Enriques surfaces}\\
\toprule
$h^2$ &$\phi(h)$&  singularities & number of $\Aut(Y)$-orbits & $r_h$\\
\hline
  \endfirsthead
 \toprule
$h^2$ &$\phi(h)$&  singularities & number of $\Aut(Y)$-orbits & $r_h$ \\
\hline
\endhead
 \endfoot
 \endlastfoot
2 &1 & \(D_{8} \) &2 & 135 \\
 & & \(D_{8} \) &1 & 8640 \\
 & & \(D_{8} \) &1 & 9450 \\
 & & \(E_{7} A_{1} \) &2 & 120 \\
 & & \(E_{7} A_{1} \) &1 & 3360 \\
 & & \(E_{7} A_{1} \) &3 & 4320 \\
 & & \(E_{7} A_{1} \) &2 & 7560 \\
 & & \(E_{7} A_{1} \) &2 & 8640 \\
 & & \(E_{8} \) &1 & 1 \\
 & & \(E_{8} \) &1 & 135 \\
4 & 2& \(D_{8} \) &1 & 1080 \\
 & & \(D_{8} \) &2 & 7560 \\
 & & \(D_{8} \) &2 & 8640 \\
 & & \(D_{8} \) &1 & 37800 \\
 & & \(D_{8} \) &1 & 75600 \\
 & & \(D_{8} \) &2 & 120960 \\
 & & \(E_{7} A_{1} \) &1 & 34560 \\
 & & \(E_{7} A_{1} \) &1 & 80640 \\
 & & \(E_{8} \) &2 & 960 \\
6 &2 & \(D_{8} \) &1 & 8640 \\
 & & \(D_{8} \) &1 & 30240 \\
 & & \(D_{8} \) &2 & 120960 \\
 & & \(D_{8} \) &1 & 151200 \\
 & & \(D_{8} \) &1 & 302400 \\
 & & \(E_{7} A_{1} \) &1 & 120 \\
 & & \(E_{7} A_{1} \) &2 & 3360 \\
 & & \(E_{7} A_{1} \) &2 & 4320 \\
 & & \(E_{7} A_{1} \) &3 & 7560 \\
 & & \(E_{7} A_{1} \) &3 & 40320 \\
 & & \(E_{7} A_{1} \) &2 & 90720 \\
 & & \(E_{7} A_{1} \) &5 & 120960 \\
 & & \(E_{7} A_{1} \) &2 & 151200 \\
 & & \(E_{7} A_{1} \) &1 & 226800 \\
 & & \(E_{7} A_{1} \) &2 & 241920 \\
 & & \(E_{8} \) &1 & 1120 \\
 & & \(E_{8} \) &1 & 4320 \\
8 & 2& \(D_{8} \) &2 & 1080 \\
 & & \(D_{8} \) &2 & 7560 \\
 & & \(D_{8} \) &6 & 8640 \\
 & & \(D_{8} \) &4 & 69120 \\
 & & \(D_{8} \) &4 & 75600 \\
 & & \(D_{8} \) &2 & 241920 \\
 & & \(D_{8} \) &2 & 302400 \\
 & & \(D_{8} \) &2 & 453600 \\
 & & \(D_{8} \) &4 & 483840 \\
 & & \(D_{8} \) &4 & 604800 \\
 & & \(E_{7} A_{1} \) &1 & 15120 \\
 & & \(E_{7} A_{1} \) &2 & 45360 \\
 & & \(E_{7} A_{1} \) &2 & 69120 \\
 & & \(E_{7} A_{1} \) &1 & 151200 \\
 & & \(E_{7} A_{1} \) &1 & 226800 \\
 & & \(E_{7} A_{1} \) &2 & 241920 \\
 & & \(E_{7} A_{1} \) &2 & 483840 \\
 & & \(E_{8} \) &2 & 1080 \\
 & & \(E_{8} \) &2 & 7560 \\
 & & \(E_{8} \) &2 & 8640 \\
10 & 3& \(D_{8} \) &2 & 8640 \\
 & & \(D_{8} \) &4 & 120960 \\
 & & \(D_{8} \) &1 & 604800 \\
 & & \(D_{8} \) &1 & 2419200 \\
 & & \(E_{7} A_{1} \) &4 & 34560 \\
 & & \(E_{7} A_{1} \) &1 & 69120 \\
 & & \(E_{7} A_{1} \) &4 & 80640 \\
 & & \(E_{7} A_{1} \) &4 & 241920 \\
 & & \(E_{7} A_{1} \) &3 & 483840 \\
 & & \(E_{7} A_{1} \) &4 & 1209600 \\
 & & \(E_{7} A_{1} \) &1 & 1612800 \\
 & & \(E_{8} \) &2 & 960 \\
 & & \(E_{8} \) &1 & 24192 \\
\bottomrule
\end{longtable}

\renewcommand{\arraystretch}{1.3}
 \rowcolors{1}{}{lightgray}
 \begin{longtable}{ccccc}
 \caption{$\Aut(Y)$-orbits on $(A_2,A_2)$-generic Enriques surfaces}\\
\toprule
$h^2$ &$\phi(h)$&  singularities & number of $\Aut(Y)$-orbits & $r_h$\\
\hline
  \endfirsthead
 \toprule
$h^2$ &$\phi(h)$&  singularities & number of $\Aut(Y)$-orbits & $r_h$ \\
\hline
\endhead
 \endfoot
 \endlastfoot
2 & & \(A_{1} \) &7752 & 3 \\
 & & \(A_{1} \) &4284 & 6 \\
 & & \(A_{2} \) &3808 & 1 \\
 & & \(A_{2} \) &4896 & 3 \\
4 & & \(A_{1} \) &39168 & 3 \\
 & & \(A_{1} \) &45696 & 6 \\
 & & \(A_{2} \) &8568 & 1 \\
 & & \(A_{2} \) &25704 & 3 \\
 & & \(A_{2} \) &4760 & 6 \\
6 & & \(A_{1} \) &172992 & 3 \\
 & & \(A_{1} \) &239904 & 6 \\
 & & \(A_{2} \) &45832 & 1 \\
 & & \(A_{2} \) &145656 & 3 \\
 & & \(A_{2} \) &42840 & 6 \\
8 & & \(A_{1} \) &548352 & 3 \\
 & & \(A_{1} \) &900864 & 6 \\
 & & \(A_{2} \) &102816 & 1 \\
 & & \(A_{2} \) &376992 & 3 \\
 & & \(A_{2} \) &137088 & 6 \\
10 & & \(A_{1} \) &548352 & 3 \\
 & & \(A_{1} \) &1292544 & 6 \\
 & & \(A_{2} \) &78336 & 1 \\
 & & \(A_{2} \) &548352 & 3 \\
 & & \(A_{2} \) &304640 & 6 \\
 \bottomrule
\end{longtable}

\renewcommand{\arraystretch}{1.3}
 \rowcolors{1}{}{lightgray}
 \begin{longtable}{ccccc}
 \caption{$\Aut(Y)$-orbits on $(3A_1,3A_1)$-generic Enriques surfaces}\\
\toprule
$h^2$ &$\phi(h)$&  singularities & number of $\Aut(Y)$-orbits & $r_h$\\
\hline
  \endfirsthead
 \toprule
$h^2$ &$\phi(h)$&  singularities & number of $\Aut(Y)$-orbits & $r_h$ \\
\hline
\endhead
 \endfoot
 \endlastfoot
2 & & \(\) &816 & 8 \\
 & & \(A_{1} \) &3072 & 4 \\
 & & \(A_{1} \) &1728 & 8 \\
 & & \(A_{1} A_{1} \) &3072 & 2 \\
 & & \(A_{1} A_{1} \) &3072 & 4 \\
 & & \(A_{1} A_{1} \) &960 & 8 \\
 & & \(A_{1} A_{1} A_{1} \) &1056 & 1 \\
 & & \(A_{1} A_{1} A_{1} \) &1584 & 2 \\
 & & \(A_{1} A_{1} A_{1} \) &760 & 4 \\
 & & \(A_{1} A_{1} A_{1} \) &180 & 8 \\
4 & & \(\) &9600 & 8 \\
 & & \(A_{1} \) &13824 & 4 \\
 & & \(A_{1} \) &16128 & 8 \\
 & & \(A_{1} A_{1} \) &9216 & 2 \\
 & & \(A_{1} A_{1} \) &18432 & 4 \\
 & & \(A_{1} A_{1} \) &11520 & 8 \\
 & & \(A_{1} A_{1} A_{1} \) &2208 & 1 \\
 & & \(A_{1} A_{1} A_{1} \) &6336 & 2 \\
 & & \(A_{1} A_{1} A_{1} \) &6120 & 4 \\
 & & \(A_{1} A_{1} A_{1} \) &2640 & 8 \\
6 & & \(\) &32640 & 8 \\
 & & \(A_{1} \) &67584 & 4 \\
 & & \(A_{1} \) &96768 & 8 \\
 & & \(A_{1} A_{1} \) &36864 & 2 \\
 & & \(A_{1} A_{1} \) &98304 & 4 \\
 & & \(A_{1} A_{1} \) &72192 & 8 \\
 & & \(A_{1} A_{1} A_{1} \) &8192 & 1 \\
 & & \(A_{1} A_{1} A_{1} \) &27648 & 2 \\
 & & \(A_{1} A_{1} A_{1} \) &37056 & 4 \\
 & & \(A_{1} A_{1} A_{1} \) &17056 & 8 \\
8 & & \(\) &172800 & 8 \\
 & & \(A_{1} \) &193536 & 4 \\
 & & \(A_{1} \) &317952 & 8 \\
 & & \(A_{1} A_{1} \) &95232 & 2 \\
 & & \(A_{1} A_{1} \) &293376 & 4 \\
 & & \(A_{1} A_{1} \) &244224 & 8 \\
 & & \(A_{1} A_{1} A_{1} \) &14400 & 1 \\
 & & \(A_{1} A_{1} A_{1} \) &72192 & 2 \\
 & & \(A_{1} A_{1} A_{1} \) &109008 & 4 \\
 & & \(A_{1} A_{1} A_{1} \) &61728 & 8 \\
10 & & \(\) &156672 & 8 \\
 & & \(A_{1} \) &221184 & 4 \\
 & & \(A_{1} \) &516096 & 8 \\
 & & \(A_{1} A_{1} \) &98304 & 2 \\
 & & \(A_{1} A_{1} \) &344064 & 4 \\
 & & \(A_{1} A_{1} \) &430080 & 8 \\
 & & \(A_{1} A_{1} A_{1} \) &15360 & 1 \\
 & & \(A_{1} A_{1} A_{1} \) &72192 & 2 \\
 & & \(A_{1} A_{1} A_{1} \) &134016 & 4 \\
 & & \(A_{1} A_{1} A_{1} \) &121920 & 8 \\
 \hline
\end{longtable}

\end{document}